\author{Nick Bezhanishvili and Antonio Maria Cleani}
\title[Pre-filtrations, pre-stable canonical rules and the KM-isomorphism]{Pre-filtrations, pre-stable canonical rules and the Kuznetsov-Muravitsky isomorphism}
\tikzset{
	modal/.style={>=stealth,shorten >=1pt,shorten <=1pt,auto,node distance=1.5cm,
		semithick},
	world/.style={rectangle,draw,minimum size=0.8cm,fill=gray!15},
	point/.style={circle,draw,inner sep=0.5mm,fill=black},
	reflexive above/.style={->,loop,looseness=5,in=120,out=60},
	reflexive below/.style={->,loop,looseness=5,in=240,out=300},
	reflexive left/.style={->,loop,looseness=5,in=150,out=210},
	reflexive right/.style={->,loop,looseness=5,in=30,out=330}
}
\tikzset{roundnode/.style={draw,circle,inner sep=2pt,outer sep=0pt}
}
\tikzset{irr/.style={draw,circle,fill,inner sep=2pt,outer sep=0pt}
}
\newcommand{\logic}[1]{\mathtt{#1}}
\renewcommand{\int}[2][g]{\llbracket #2\rrbracket^{#1}}
\newcommand{\dualspa}[1]{\alg{#1}_*}
\newcommand{\dualalg}[1]{\spa{#1}^*}
\newcommand{\dualkr}[1]{\alg{#1}_+}
\newcommand{\clop}[2][]{\op{Clop}_{#1}(\spa{#2})}
\newcommand{\clopup}[2][]{\op{ClopUp}_{#1}(\spa{#2})}
\newcommand{\clopdown}[2][]{\op{ClopDown}_{#1}(\spa{#2})}
\newcommand{\upset}[2][]{{\Uparrow_{#1}} #2}
\newcommand{\downset}[2][]{{\Downarrow_{#1}} #2}
\renewcommand{\max}[2][]{{\mathit{max}_{#1}} (#2)}
\newcommand{\pas}[2][]{\mathit{pas}_{#1}({#2})}
\renewcommand{\sim}{\mathit{sim}}
\newcommand{\clm}{\mathit{clm}}
\newcommand{\sep}{\, |\,}
\newcommand{\alg}[1]{\mathfrak{#1}}
\newcommand{\spa}[1]{\mathfrak{#1}}
\newcommand{\class}[1]{\mathcal{#1}}
\newcommand{\op}[1]{\mathsf{#1}}
\newcommand{\lat}[1]{\mathbf{#1}}
\newcommand{\scrmod}[2]{\mu(\alg{#1}, {#2})}
\newcommand{\scr}[2]{{\xi}(\alg{#1}, {#2})}
\newcommand{\ruletrans}[2]{\mu_\circ(\alg{#1}, {#2})}
\newcommand{\scrsi}[2]{\eta(\alg{#1}, {#2})}
\newcommand{\least}[1][]{\tau_{#1}}
\newcommand{\greatest}[1][]{\sigma_{#1}}
\newcommand{\fragment}[1][]{\rho_{#1}}
\theoremstyle{plain}
\newtheorem{theorem}{Theorem}[section]
\newtheorem{proposition}[theorem]{Proposition}
\newtheorem{lemma}[theorem]{Lemma}
\newtheorem{corollary}[theorem]{Corollary}
\theoremstyle{definition}
\newtheorem{definition}[theorem]{Definition}
\newtheorem{remark}[theorem]{Remark}
\date{\today}
\begin{document}

\maketitle

\begin{abstract}
We introduce pre-filtrations and pre-stable canonical rules for the Kuznetsov–Muravitsky system of intuitionistic modal logic and provide a new proof of the Kuznetsov–Muravitsky isomorphism, along with several preservation results. The proofs employ these rules and a duality between modal (Heyting) algebras and their corresponding order-topological spaces.
\end{abstract}

\section{Introduction}

The Kuznetsov–Muravitsky logic 
$\logic{KM}$
is among the most curious and intriguing  systems of Intuitionistic Modal Logic. It was introduced by Kuznetsov and Muravitsky in \cite{kuznetsov1980provability, Kuznetsov1978PIL}. One of the main motivations for studying this system was its connection to the modal provability logic 
$\logic{GL}$. According to Kuznetsov (see \cite{Muravitsky2014LKaB}), understanding a logical system amounts to understanding the behavior of this system and its ``neighbors''—that is, the extensions of this logic. From this point of view, the true intuitionistic counterpart of the system  
$\logic{GL}$ is the system whose lattice of normal extensions is isomorphic to the lattice of normal extensiosn of 
$\logic{GL}$. The Kuznetsov–Muravitsky system precisely satisfies this condition. It was proved in \cite{KuznetsovMuravitsky1986OSLAFoPLE, muravitsky1985correspondence, muravitskii1989correspondence} that the lattice of normal extensions of 
$\logic{GL}$ is isomorphic to the lattice of normal extensions of 
$\logic{KM}$. Following Esakia \cite{Esakia2006TMHCaCMEotIL}, we will refer to this result as the Kuznetsov–Muravitsky isomorphism. 

We refer to \cite{Muravitsky2014LKaB} and \cite{Litak2014CMwPS} for a comprehensive history and overview of the system $\logic{KM}$. Here, we would like to highlight one additional illuminating  result concerning $\logic{KM}$, commonly known as Kuznetsov's theorem: adding the $\logic{KM}$ axioms to any superintuitionistic logic yields a conservative extension of that logic. In algebraic terms, every variety of Heyting algebras is generated by the Heyting reducts of $\logic{KM}$-algebras. This result was first announced by Kuznetsov in \cite{kuznetsov1985proof}. Kuznetsov’s original proof was proof-theoretic in nature and a bit  sketchy. It was later reproduced and expanded by Muravitsky \cite{muravitsky2017equipollence}. More recently, Jibladze and E.~Kuznetsov \cite{JK25} provided a purely algebraic proof of this result.

In this paper,\footnote{This paper is partially based on the second-named author's Master's thesis \cite{Cleani2021TEVSCR}.} we concentrate on the Kuznetsov–Muravitsky isomorphism, on a related isomorphism between the lattice of normal extensions of the modalized Heyting Calculus and  of the modal logic $\logic{K4.Grz}$, and the corresponding preservation results. Our contribution consists in giving novel proofs of these known results. 

Recently, a new proof of the lattice isomorphism between superintuitionistic logics and normal extensions of the modal logic $\logic{Grz}$---known as the Blok-Esakia ismorphism---was given by the authors of this paper in \cite{BezhanishviliBETVSCR}. The method is based on the  stable canonical formulas and rules \cite{BezhanishviliEtAl2016SCR, BezhanishviliEtAl2016CSL} and makes use of the duality between modal, Heyting, and frontal Heyting algebras on the one hand, and the corresponding order-topological structures on the other. Using this technique, a new dual proof of the Blok–Esakia isomorphism between superintuitionistic logics and the normal extensions of the modal logic $\logic{Grz}$ was obtained in \cite{BezhanishviliBETVSCR}. The other existing proofs are due to Blok \cite{Blok1976VoIA} (see also \cite{Stronkowski2018OtBETfUC}), Esakia (which has never been published), and  Zakharyaschev (see, e.g., \cite[Section 9.6]{ChagrovZakharyaschev1997ML}).  Our approach was also generalized to a new proof of a lattice  isomorphism between bi-superintuitionistic logics and of normal extensions of tense-$\logic{Grz}$ (originally proved by Wolter \cite{Wolter1998OLwC}). Furthermore, the technique was also extended to  a new proof of the Kuznetsov–Muravitsky isomorphism \cite{BezhanishviliBETVSCR}. Note that for this, one only needs a theory of stable canonical rules over the modal system $\logic{K4}$.

In the present paper we revise this method by introducing the apparatus of \emph{pre-stable canonical  rules} for $\logic{KM}$ and $\logic{GL}$. This refinement allows us to obtain new proofs of the preservation results (which was not achieved in \cite{BezhanishviliBETVSCR}), since it requires a theory of algebra-based rules for both $\logic{KM}$ and $\logic{GL}$. To develop these, we revisit constructions related to filtration for $\logic{KM}$ and $\logic{GL}$ used in \cite{Castiglioni2012TFMPftVoHAwS} and \cite{Boolos1993TLoP}, and related, in the $\logic{KM}$ case, to a construction of Muravitsky \cite{Muravitskii1981FAotICatEoaEHNM}. We call these constructions \emph{pre-filtrations}, and the resulting rules \emph{pre-stable canonical rules}. Equipped with this technique, we revisit the proof strategy of \cite{BezhanishviliBETVSCR} to obtain an alternative proof of the Kuznetsov–Muravitsky isomorphism and of an isomorphism between the lattice of normal extensions of the modalized Heyting Calculus and that of the modal system $\logic{K4.Grz}$. The latter result was first announced by Esakia \cite{Esakia2006TMHCaCMEotIL} and later proved by Litak \cite{Litak2014CMwPS} and Muravitsky \cite{Murav17LU} using different  approaches. We will refer to it as an Esakia theorem.  We then prove some preservation results concerning the  Kuznetsov–Muravitsky isomorphism, extending proof strategies from \cite{BezhanishviliBETVSCR} that take full advantage of  pre-stable canonical rules for $\logic{KM}$.

With this paper we hope to provide a new perspective on the Kuznetsov–Mura\-vitsky system $\logic{KM}$. We are honored and extremely happy to be able to contribute this work to the volume dedicated to Professor Kuznetsov's memory. The impact of Kuznetsov's work on many areas of mathematical logic is difficult to overstate. We hope that the technique of pre-stable canonical  rules for $\logic{KM}$, together with the isomorphism and preservation proofs, will serve as another stepping stone toward understanding this beautiful and deeply intriguing system of intuitionistic modal logic.

\section{Preliminaries}


\subsection{Rule Systems} 

	Throughout the paper we fix a countably infinite set of propositional variables $\mathit{Prop}$. The set $\mathit{Frm}_\sim$ of \emph{superintuitionistic modal formulae} is defined recursively as follows:
    \[\varphi::= p \sep  \bot \sep \top \sep \varphi \land \varphi \sep \varphi\lor \varphi \sep \varphi\to \varphi \sep {\boxtimes\varphi}, \]
    where $p\in \mathit{Prop}$. The set of $\mathit{Frm}_\clm$ \emph{classical modal formulae} is defined recursively as follows: 
    \[\varphi::= p \sep  \bot \sep \top \sep \varphi \land \varphi \sep \neg \varphi \sep   \square\varphi, \]
    where, again, $p\in \mathit{Prop}$. We abbreviate the classical connectives $\to$ and $\lor$ in the usual way. 

    For $\heartsuit\in \{\sim, \clm\}$, the set $\mathit{Rul}_\heartsuit$ of \emph{$\heartsuit$-rules} is defined as the set of all ordered pairs $(\Gamma, \Delta)$ such that $\Gamma, \Delta$ are finite subsets of $\mathit{Frm}_\heartsuit$. We adopt the convention of writing $\Gamma/\Delta$ for the rule $(\Gamma, \Delta)$. Intuitively, a rule $\Gamma/\Delta$ says that if all the formulae in $\Gamma$ hold, then at least one of the formulae in $\Delta$ holds.

    For $\heartsuit\in \{\sim, \clm\}$, a \emph{$\heartsuit$-rule system} is defined as a set $\logic{S}$ of $\heartsuit$-rules satisfying the following conditions:
        \begin{enumerate}
			\item If $\Gamma/\Delta\in \logic{S}$, then $ s[\Gamma]/ s[\Delta]\in \logic{S}$ for all substitutions $s$ (structurality);
			\item $\varphi/\varphi\in \logic{S}$ for every $\heartsuit$-formula $\varphi$ (reflexivity);
			\item If $\Gamma/\Delta\in \logic{S}$, then $\Gamma;\Gamma'/\Delta;\Delta'\in \logic{S}$ for any finite sets of $\heartsuit$-formulae $\Gamma',\Delta'$ (monotonicity);
			\item If $\Gamma/\Delta;\varphi\in\logic{S}$ and $\Gamma;\varphi/\Delta\in \logic{S}$, then $\Gamma/\Delta\in \logic{S}$ (cut);
			\item $\varphi, \varphi\to \psi/\psi\in \logic{S}$ for every $\heartsuit$-formulae $\varphi, \psi$ (modus ponens);
			\item ${\varphi/{\odot \varphi}}$ for every $\heartsuit$-formula $\varphi$, where ${\odot=\boxtimes}$ if ${\heartsuit=\sim}$ and ${\odot=\square}$ if ${\heartsuit=\clm}$ (necessitation).
        \end{enumerate}
    Here by a \emph{substitution} we mean a function $s:\mathit{Prop}\to \mathit{Frm}_\heartsuit$ which commutes with all the primitive connectives of $\mathit{Frm}_\heartsuit$. 

    If $\class{S}$ is a set of $\heartsuit$-rule systems and  $\Sigma, \Xi$ are sets of $\heartsuit$-rules, we write $\Xi\oplus_{\class{S}}\Sigma$ for the least $\heartsuit$-rule system in $\class{S}$, if it exists, extending both $\Xi$ and $\Sigma$. Likewise, we write $\Xi\oplus_{\class{S}}\Sigma$ for the greatest $\heartsuit$-rule system in $\class{S}$, if it exists, contained in both $\Xi$ and $\Sigma$. When $\class{S}$ is clear from context, we write simply $\oplus$ and $\otimes$ instead of $\oplus_{\class{S}}$ and $\otimes_{\class{S}}$. If $\logic{S}$ is a $\heartsuit$-rule system, we write $\lat{NExt}(\logic{S})$ for the set of all $\heartsuit$-rule systems extending $\logic{S}$.
    
    The set of all $\heartsuit$-rule systems forms a complete lattice under the operations $\oplus$ and $\otimes$; the meet $\otimes$ in fact coincides with intersection. Given two $\heartsuit$-rule systems $\logic{S,S'}$ and a set of $\heartsuit$-rules $\Xi$, we say that $\Xi$ \emph{axiomatizes $\logic{S}$ over $\logic{S'}$} when $\logic{S'}\oplus \Xi=\logic{S}$, where the join is that of the lattice of all $\heartsuit$-rule systems. We say simply that $\Xi$ \emph{axiomatizes $\logic{S}$} when $\Xi$ {axiomatizes $\logic{S}$} over $\logic{S'}$ and $\logic{S'}$ is the least $\heartsuit$-rule system. 
    
    A \emph{$\heartsuit$-logic} is a $\heartsuit$-rule system axiomatized by a set of assumption free, single-conclusion rules---that is, by a set of rules of the form $/\varphi$ for some $\heartsuit$-formula $\varphi$. Logics in this sense correspond one-to-one with logics conceived of as sets of formulae closed under appropriate conditions, which is the most common conception of logics in the literature. Indeed, when $\logic{L}$ is a logic in the latter sense, the rule system (in the appropriate signature) axiomatized by all rules $/\varphi$ for $\varphi\in \logic{L}$ is a logic in our sense.

     If $\class{S}$ is a set of $\heartsuit$-rule systems and  $\logic{L, L'}$ are $\heartsuit$-logics, we write $\logic{L}\sqcup_\class{S}\logic{L'}$ and $\logic{L}\sqcap_\class{S}\logic{L'}$, respectively, for the least $\heartsuit$-\emph{logic} in $\class{S}$ extending both $\logic{L}$ and $\logic{L'}$ and for the greatest $\heartsuit$-\emph{logic} in $\class{S}$ contained in both $\logic{L}$ and $\logic{L'}$, provided they exist. As before, we omit subscripts when they can be inferred from context.  It turns out that $\sqcup_\class{S}$ and $\sqcap_\class{S}$ are always well defined when $\class{S}$ is the set of all $\heartsuit$-rule systems, as are their infinitary versions. Thus the set of all $\heartsuit$-logics carries a complete lattice. If $\logic{L}$ is a $\heartsuit$-logic, we write $\lat{NExtL}(\logic{L})$ for the lattice of all $\heartsuit$-logics extending $\logic{L}$. 

     It is easy to see that, over the set of all $\heartsuit$-rule systems, the join $\logic{L}\oplus\logic{L'}$ of two $\heartsuit$-logics is always a $\heartsuit$-logic. Thus $\sqcup$ coincides with $\oplus$. However, $\logic{L}\otimes\logic{L'}$ may not be a logic. It will be useful to give a characterization of $\sqcap$ in terms of $\otimes$. If $\logic{S}$ is a $\heartsuit$-rule system, let $\op{Taut}(\logic{S})$ be the logic axiomatized by all rules of the form $/\varphi$ such that $/\varphi\in \logic{S}$. 
     \begin{proposition}
     The identity 
         \[\sqcap\{\logic{L}_i:i\in I\}=\mathsf{Taut}(\otimes\{\logic{L}_i:i\in I\})\]
         holds for any set of $\heartsuit$-logics $\{\logic{L}_i:i\in I\}$.\label{logicmeet}
     \end{proposition}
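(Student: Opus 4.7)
The plan is to unfold the definitions and verify the two containments that express $\op{Taut}(\otimes\{\logic{L}_i : i\in I\})$ as the greatest $\heartsuit$-logic contained in every $\logic{L}_i$. Write $\logic{M} := \op{Taut}(\otimes\{\logic{L}_i : i\in I\})$. By construction $\logic{M}$ is a $\heartsuit$-logic, being axiomatized by a set of assumption-free, single-conclusion rules. So it suffices to show (i) that $\logic{M}\subseteq\logic{L}_i$ for every $i\in I$, and (ii) that any $\heartsuit$-logic $\logic{N}$ with $\logic{N}\subseteq\logic{L}_i$ for all $i\in I$ is contained in $\logic{M}$.

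For (i), fix $i\in I$. Every axiom of $\logic{M}$ is of the form $/\varphi$ with $/\varphi\in\otimes\{\logic{L}_j : j\in I\}\subseteq\logic{L}_i$. Since $\logic{L}_i$ is a $\heartsuit$-rule system containing all these axioms and $\logic{M}$ is the least $\heartsuit$-rule system generated by them, $\logic{M}\subseteq\logic{L}_i$. For (ii), let $\logic{N}$ be a $\heartsuit$-logic contained in each $\logic{L}_i$. Because $\logic{N}$ is a $\heartsuit$-logic, it is axiomatized by a family of rules of the form $/\varphi$, each of which lies in $\logic{N}\subseteq\logic{L}_i$ for every $i\in I$, hence in $\otimes\{\logic{L}_i : i\in I\}$. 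But every such $/\varphi$ is then, by definition, in $\op{Taut}(\otimes\{\logic{L}_i : i\in I\}) = \logic{M}$, and since $\logic{M}$ is a $\heartsuit$-rule system containing an axiomatization of $\logic{N}$, we conclude $\logic{N}\subseteq\logic{M}$.

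Combining (i) and (ii), $\logic{M}$ is the greatest $\heartsuit$-logic contained in each $\logic{L}_i$, which is precisely $\sqcap\{\logic{L}_i : i\in I\}$ (in particular this shows the latter exists). Nothing in the argument is substantial: the only subtlety worth flagging is that when showing (ii) we must use the characterization of $\heartsuit$-logics as rule systems axiomatized by assumption-free single-conclusion rules, rather than merely closed under some stronger closure operator. No single step is really the main obstacle; the content of the proposition is essentially that, although $\otimes\{\logic{L}_i : i\in I\}$ need not itself be a logic, its assumption-free single-conclusion fragment always is, and it captures exactly the common theorems of the $\logic{L}_i$.
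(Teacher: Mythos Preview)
Your proof is correct. The paper does not actually give an argument for this proposition; it simply cites \cite[Prop.~1.3]{BezhanishviliBETVSCR}. Your direct verification that $\op{Taut}(\otimes\{\logic{L}_i:i\in I\})$ is a $\heartsuit$-logic contained in each $\logic{L}_i$ and maximal with this property is exactly the routine unfolding one would expect, and it fills in what the paper leaves to the reference.
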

     For a proof, see \cite[Prop. 1.3]{BezhanishviliBETVSCR}.
    
    The main rule systems of interest in this paper are the following. On the $\sim$ side, the system $\logic{mHC}$ (the \emph{modalized Heyting calculus}) is defined as the least $\sim$-rule system containing $/\varphi$ whenever $\varphi$ is a theorem of the intuitionistic propositional calculus, as well as the rules
    \begin{align*}
        &/{\boxtimes(p\to q)}\to ({\boxtimes p} \to {\boxtimes q}),\\
        &/p\to \boxtimes p,\\
        &/{\boxtimes p}\to (q\lor (q\to p)).
    \end{align*}
    Among systems in $\lat{NExt}(\logic{mHC})$, the system $\logic{KM}$ (the \emph{Kuznetsov-Muravitsky rule system}) is of particular interest. It is defined as the least rule system in $\lat{NExt}(\logic{mHC})$ containing the Gödel-Löb rule:
    \[/({\boxtimes p}\to p)\to p.\]
    We refer to \cite{Esakia2006TMHCaCMEotIL,Muravitsky2014LKaB}
for more infromation about $\logic{mHC}$ and $\logic{KM}$.

    Now to the $\clm$-rule systems. The weakest such system we consider is the system $\logic{K4}$, defined as the least $\clm$-rule system containing $/\varphi$ whenever $\varphi$ is valid in all transitive Kripke frames. Next, the system $\logic{K4.Grz}$ is axiomatized over $\logic{K4}$ by the Grzegorczyk rule
    \[/\square{ (\square( p\to\square p)\to p)\to p}.\]
    And finally, we have the system $\logic{GL}$, which is axiomatized over $\logic{K4}$ by adding the classical version of the Gödel-Löb rule:
    \[/\square{({\square p}\to p)}\to \square p.\]
    We note that $\logic{K4.Grz}$ is non-reflexive counterpart of the $\logic{Grz}$. We refer to  \cite{Esakia2006TMHCaCMEotIL} for more details on $\logic{K4.Grz}$.

\subsection{Algebras}

We recall that a \emph{Heyting algebra} is a tuple $\alg{H}:=(H, \land, \lor, \to , 0, 1)$ whose $\to$-free reduct is a bounded distributive lattice and the equivalence 
\[c\leq a\to b\iff a\land c\leq b\]
holds for each $a, b, c\in H$. A \emph{frontal Heyting algebra} is a structure $\alg{H}:=(H, \land, \lor, \to , \boxtimes,  0, 1)$ whose $\boxtimes$-free reduct is a Heyting algebra and $\boxtimes$ satisfies the following equalities and inequalities for all $a, b\in H$:
\begin{align*}
    \boxtimes 1&=1,\\
    \boxtimes (a\land b)&=\boxtimes a\land \boxtimes b,\\
    a&\leq \boxtimes a,\\
    \boxtimes a&\leq b\lor (b\to a).
\end{align*}
A frontal Heyting algebra is called a \emph{fronton} if, in addition, the identity
\[(\boxtimes a\to a)\leq a\]
holds for all $a\in H$. We refer to \cite{CastiglioniEtAl2010OFHA, Esakia2006TMHCaCMEotIL} for more infromation on these structures. We write $\op{fHA}$ and $\op{Frt}$ for the classes of all frontal Heyting algebras and of all frontons respectively. 

On the classical side, a \emph{modal algebra} is a tuple $\alg{M}:=(M, \land, \neg, \square, 0, 1)$ whose $\square$-free reduct is a Boolean algebra and the identities
\[\square 1= 1,\qquad \qquad \square{ (a\land b)}=\square a\land \square b\]
hold for all $a, b\in M$. A modal algebra is called 
\begin{itemize}
    \item A \emph{$\logic{K4}$-algebra} when it satisfies the inequality \[\square a \leq \square \square a;\] 
    \item A \emph{$\logic{K4.Grz}$-algebra} when it satisfies the inequality 
    \[\square(\square(a\to \square a)\to a)\leq a;\]
    \item A \emph{Magari algebra} when, in addition to the above, it satisfies the inequality \[\square(\square a\to a)\leq \square a.\]
\end{itemize}
We write $\op{K4, K4.Grz}$ and $\op{Mag}$ for the classes of all modal algebras, of all $\logic{K4}$-algebras, of all $\logic{K4.Grz}$-algebras, and of all Magari algebras respectively. 

A \emph{universal class} is a class of algebras in the same signature closed under isomorphic copies, subalgebras and ultraproducts. A \emph{variety} is a universal class which, in addition, is closed under homomorphisms and direct products. When $\class{U}$ is a class of algebras in the same signature, we write $\lat{Uni}(\class{U})$ and $\lat{Var}(\class{U})$, respectively, for the sets of universal classes and of varieties contained in $\class{U}$. We note that both sets form complete lattices, where the meets coincide with intersection.

The algebras just introduced can be used to give sound and complete semantics for \emph{sim}- and \emph{clm}-rule systems. When $\alg{A}$ is an algebra, a \emph{valuation} is any mapping $V:\mathit{Frm}_\heartsuit\to A$ that commutes with all primitive connectives of $\mathit{Frm}_\heartsuit$, where $\heartsuit=\mathit{sim}$ if $\alg{A}$ is a frontal Heyting algebra and $\heartsuit=\mathit{clm}$ if $\alg{A}$ is a modal algebra. An algebra equipped with a valuation is called a \emph{model}. We say that a model $(\alg{A}, V)$ \emph{satisfies} a rule $\Gamma/\Delta$ when the following holds: if $V(\gamma)=1$ for all $\gamma\in \Gamma$, then $V(\delta)=1$ for some $\delta\in \Delta$. We notate this as $\alg{A}, V\models \Gamma/\Delta$. A rule $\Gamma/\Delta$ is \emph{valid} on a $\nu$-algebra $\alg{A}$ when $\alg{A}, V\models\Gamma/\Delta$ holds for all valuations $V$ on $\alg{A}$.  When this holds we write $\alg{A}\models \Gamma/\Delta$, otherwise we write $\alg{A}\not\models \Gamma/\Delta$ and say that $\alg{A}$ \emph{refutes} $\Gamma/\Delta$. We can extend this notion of validity to classes of $\nu$-algebras in the obvious way; we use similar notation in this case. 

For $\heartsuit\in \{\mathit{sim, clm}\}$, when $\logic{S}$ is a $\heartsuit$-rule system, we write $\op{Alg}(\logic{S})$ for the class of all algebras of the appropriate kind (frontal Heyting for $\heartsuit=\mathit{sim}$, modal otherwise) that validate every $\Gamma/\Delta\in \logic{S}$. Conversely, if $\class{U}$ is a class of frontal Heyting or modal algebras, we let $\op{ThR}(\class{U})$ be the set of all $\heartsuit$-rules that are valid in every member of $\class{U}$, with $\heartsuit=\mathit{sim}$ if $\class{U}$ is a class of frontal Heyting algebras, $\heartsuit=\mathit{clm}$ otherwise. By Birkhoff's theorem (see, e.g., \cite{BurrisSankappanavar1981ACiUA}), $\op{Alg}(\logic{S})$ is always a universal class, and indeed a variety when $\logic{S}$ is a logic. Conversely, $\op{ThR}(\class{U})$ is always a rule system, and indeed a logic when $\class{U}$ is a variety. 
\begin{theorem}
    The following pairs of mappings are complete dual lattice isomorphisms:\label{algebraization}
    \begin{enumerate}
        \item $\op{Alg}:\lat{NExt}(\logic{mHC})\to \lat{Uni}(\op{fHA})$ and $\op{ThR}:\lat{Uni}(\op{fHA})\to \lat{NExt}(\logic{mHC})$;\label{compunimhc}
         \item $\op{Alg}:\lat{NExt}(\logic{K4})\to \lat{Uni}(\op{K4})$ and $\op{ThR}:\lat{Uni}(\op{K4})\to \lat{NExt}(\logic{K4})$;\label{compunik4}
         \item $\op{Alg}:\lat{NExtL}(\logic{mHC})\to \lat{Var}(\op{fHA})$ and $\op{ThR}:\lat{Var}(\op{fHA})\to \lat{NExtL}(\logic{mHC})$;\label{compvarmhc}
         \item $\op{Alg}:\lat{NExtL}(\logic{K4})\to \lat{Var}(\op{K4})$ and $\op{ThR}:\lat{Var}(\op{K4})\to \lat{NExtL}(\logic{K4})$.\label{compvark4}
    \end{enumerate}
\end{theorem}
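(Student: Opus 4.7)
The overall plan is to establish that $\op{Alg}$ and $\op{ThR}$ form mutually inverse, order-reversing bijections; the preservation of arbitrary joins and meets required for a \emph{complete} dual lattice isomorphism then follows automatically. Both maps are plainly antitone from the definition of validity, so bijectivity is the only nontrivial point.

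The inclusions $\logic{S} \subseteq \op{ThR}(\op{Alg}(\logic{S}))$ and $\class{U} \subseteq \op{Alg}(\op{ThR}(\class{U}))$ are immediate. For the reverse inclusion $\op{Alg}(\op{ThR}(\class{U})) \subseteq \class{U}$ in items (\ref{compunimhc}) and (\ref{compunik4}), I would invoke the standard model-theoretic characterization of universal classes as precisely those axiomatizable by universal first-order sentences: each rule $\Gamma/\Delta$ corresponds to the universal sentence $\forall \bar{x}(\bigwedge_{\gamma \in \Gamma} \gamma = 1 \to \bigvee_{\delta \in \Delta} \delta = 1)$, and since $\class{U}$ is closed under $\mathbf{I}$, $\mathbf{S}$, and $\mathbf{P_U}$, it is axiomatizable by the universal sentences valid in it---namely, by $\op{ThR}(\class{U})$.

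The main technical obstacle is the other reverse inclusion $\op{ThR}(\op{Alg}(\logic{S})) \subseteq \logic{S}$, amounting to algebraic completeness. The plan is an extended Lindenbaum--Tarski construction. Given $\Gamma/\Delta \notin \logic{S}$, Zorn's lemma yields a maximal pair $(\Gamma^*, \Delta^*)$ with $\Gamma \subseteq \Gamma^*$, $\Delta \subseteq \Delta^*$, and $\Gamma^*/\Delta^* \notin \logic{S}$; reflexivity forces $\Gamma^* \cap \Delta^* = \emptyset$, and maximality combined with cut forces $\Gamma^* \cup \Delta^* = \mathit{Frm}_\heartsuit$. Define $\varphi \equiv \psi$ iff $\Gamma^*/(\varphi \leftrightarrow \psi) \in \logic{S}$; using modus ponens, necessitation, cut, monotonicity, and the base axioms of $\logic{mHC}$ or $\logic{K4}$, this is a congruence compatible with all primitive connectives of $\mathit{Frm}_\heartsuit$. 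The quotient $\alg{A} := \mathit{Frm}_\heartsuit/{\equiv}$ carries the canonical valuation $V(\varphi) := [\varphi]$, and by maximality one has $[\varphi] = [\top]$ iff $\varphi \in \Gamma^*$; hence $V$ sends every $\gamma \in \Gamma$ to the top and no $\delta \in \Delta$ to the top. Structurality of $\logic{S}$ ensures $\alg{A} \in \op{Alg}(\logic{S})$, witnessing $\Gamma/\Delta \notin \op{ThR}(\op{Alg}(\logic{S}))$. The most delicate point is verifying that $\equiv$ respects the modal operators---in the frontal case one must combine necessitation with the $\logic{mHC}$ axiom $\boxtimes(p \to q) \to (\boxtimes p \to \boxtimes q)$, and then check that the quotient satisfies the remaining frontal inequalities, which requires some care but is routine.

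Items (\ref{compvarmhc}) and (\ref{compvark4}) will follow by restricting the bijection already established. A rule system is a logic iff it is axiomatized by assumption-free, single-conclusion rules, which algebraically corresponds to $\op{Alg}(\logic{L})$ being defined by equations $\varphi = 1$. By Birkhoff's HSP theorem, equationally defined classes are precisely the varieties, so the dual isomorphism from (\ref{compunimhc}) restricts to one between $\lat{NExtL}(\logic{mHC})$ and $\lat{Var}(\op{fHA})$, and the same reasoning delivers (\ref{compvark4}) from (\ref{compunik4}).
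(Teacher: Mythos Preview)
Your proposal is correct and follows the standard Lindenbaum--Tarski strategy for algebraizing multi-conclusion rule systems. The paper itself does not prove this theorem but simply defers to the literature (Jer\'abek, Chagrov--Zakharyaschev, Wolter--Zakharyaschev), where essentially the same argument can be found; your sketch is therefore considerably more explicit than what the paper provides.
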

\begin{proof}
    For \Cref{compunik4,compvark4}, see \cite[Thm. 2.2]{Jerabek2009CR} and \cite[Thm. 7.56]{ChagrovZakharyaschev1997ML}.  \Cref{compvarmhc} follows from \cite[Prop. 1]{WolterZakharyaschev2014OtBET}, and the proof of \Cref{compunimhc} is a routine generalization.
\end{proof}
\begin{corollary}
    Every $\sim$-rule system (resp.\ $\clm$-rule system) is complete with respect to a universal class of frontal Heyting algebras (resp.\ modal algebras). Likewise, every $\sim$-logic (resp.\ $\clm$-logic) is complete with respect to a variety of frontal Heyting algebras (resp.\ modal algebras).
\end{corollary}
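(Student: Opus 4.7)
The plan is to read off the corollary directly from \Cref{algebraization}, which is already the substantive content. The corollary is just a rephrasing of the fact that the pair $(\op{Alg}, \op{ThR})$ constitutes a dual isomorphism, so the only real work is to unpack the definition of completeness and match it with the fixed-point identity $\op{ThR} \circ \op{Alg} = \mathrm{id}$.

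More concretely, fix $\heartsuit \in \{\sim, \clm\}$ and let $\logic{S}$ be a $\heartsuit$-rule system extending the appropriate base (which we may always assume, since the base systems $\logic{mHC}$ and $\logic{K4}$ are the least rule systems in the relevant settings; for weaker base systems one works with the evident variant of \Cref{algebraization}). Set $\class{U} := \op{Alg}(\logic{S})$. By the appropriate item of \Cref{algebraization}, $\class{U}$ is a universal class of frontal Heyting algebras (resp.\ modal algebras), and the dual isomorphism yields
\[
\op{ThR}(\class{U}) = \op{ThR}(\op{Alg}(\logic{S})) = \logic{S}.
\]
Unpacking the definition of $\op{ThR}$, this identity says that for every $\heartsuit$-rule $\Gamma/\Delta$, one has $\Gamma/\Delta \in \logic{S}$ if and only if $\class{U} \models \Gamma/\Delta$. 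The left-to-right direction is the familiar soundness statement, and the right-to-left direction is precisely completeness of $\logic{S}$ with respect to $\class{U}$.

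For the second half, if $\logic{S}$ is in addition a $\heartsuit$-logic, then I would invoke \Cref{compvarmhc} or \Cref{compvark4} of \Cref{algebraization} instead: the same argument shows $\op{ThR}(\op{Alg}(\logic{S})) = \logic{S}$, but now $\op{Alg}(\logic{S})$ is guaranteed to be a variety rather than merely a universal class. Since this corollary is essentially a restatement of \Cref{algebraization}, no serious obstacle arises; the only point worth checking carefully is that the algebraic semantics introduced just before the theorem (valuations, satisfaction of a rule in a model, validity in a class) is exactly the one implicit in the definitions of $\op{Alg}$ and $\op{ThR}$, so that the abstract fixed-point equation translates transparently into a soundness-and-completeness statement.
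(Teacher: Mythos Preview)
Your approach is correct and matches the paper's treatment: the corollary is stated without proof immediately after \Cref{algebraization}, and your unpacking of the fixed-point identity $\op{ThR}\circ\op{Alg}=\mathrm{id}$ into a soundness-and-completeness statement is exactly the intended content. One small inaccuracy: $\logic{mHC}$ and $\logic{K4}$ are \emph{not} the least $\sim$- and $\clm$-rule systems as defined in the paper (the least $\clm$-rule system, for instance, does not contain the $\logic{K4}$ axiom), so the corollary as literally stated requires the ``evident variant'' you allude to rather than \Cref{algebraization} itself---but your hedge covers this, and the paper is arguably a bit loose on the same point.
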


It should be apparent that $\op{Frt}=\op{Alg}(\logic{KM})$, since a frontal Heyting algebra $\alg{H}$ satisfies the inequality ${\boxtimes a}\to a\leq a$ iff it validates the rule $/({\boxtimes p}\to p)\to  p$. It is likewise apparent that $\op{K4}=\op{Alg}(\logic{K4})$, $\op{K4.Grz}=\op{Alg}(\logic{K4.Grz})$ and $\op{Mag}=\op{Alg}(\logic{GL})$.

\subsection{Spaces and Kripke Frames}

We will also interpret rule systems over expansions of Stone spaces and over Kripke frames. 
When $X$ is a set and $U\subseteq X$, we write $X\smallsetminus U$ for the complement of $U$ in $X$. If $X$ is clear from context, we write $-U$ instead of $X\smallsetminus U$. 
Given a binary relation $\prec$ on a set $X$, for any $U\subseteq X$ we define
\begin{align*}
    \upset[\prec]{U}:=\{x\in X: y\prec x\text{ for some }y\in U\},\\
    \downset[\prec]{U}:=\{x\in X:x\prec y\text{ for some }y\in U\}.
\end{align*}
In the case where $U=\{y\}$ we write $\upset[\prec]{x}$ and $\downset[\prec]{x}$ instead of $\upset[\prec]{\{x\}}$ and $\upset[\prec]{\{x\}}$. We omit subscripts when the relation is clear from context. 

We recall that a \emph{Stone space} is a compact Hausdorff space with a basis of clopens. A \emph{modal space} is a triple $\spa{X}=(X, R, \class{O})$ such that $(X, \class{O})$ is a Stone space and $R$ is a binary relation satisfying the following conditions:
\begin{enumerate}
        \item $\upset{x}$ is closed for every $x\in X$;\label{esa1}
		\item $\downset{U}\in \clop{X}$ for every $U\in \clop{X}$. \label{esa2}
\end{enumerate}

A \emph{modalized Esakia space} is a quadruple $\spa{X}=(X, \leq, \sqsubset, \class{O})$ such that $(X, \class{O})$ is a Stone space and $\leq, \sqsubset$ are binary relations satisfying the following conditions:
	\begin{enumerate}
        \item $\leq$ is reflexive and transitive;
        \item $\upset[\leq]{x}$ is closed for every $x\in X$;
		\item $\downset[\leq]{U}\in \clop{X}$ for every $U\in \clop{X}$; 
		\item $\{x\in X: \upset[\sqsubset]{x}\subseteq U\}\in \clopup[\leq]{X}$ whenever $U\in \clopup[\leq]{X}$; 
		\item The reflexive closure of $\sqsubset$ coincides with $\leq$. 
	\end{enumerate}
We refer to \cite{CastiglioniEtAl2010OFHA} and \cite{Esakia2006TMHCaCMEotIL} for more infromation about  modalized Esakia spaces. 

A \emph{modal space} is a triple $\spa X=(X, R, \class{O})$ such that $(X, \class{O})$ is a Stone space and $R$ is a binary relation on $X$ satisfying the following conditions:
	\begin{enumerate}
        \item $\upset[R]{x}$ is closed for every $x\in X$;
		\item $\downset[R]{U}\in \clop{X}$ for every $U\in \clop{X}$.
	\end{enumerate}
In any modal space, we let $R^+$ denote the reflexive closure of $R$.

When $\spa{X}, \spa{Y}$ are modalized Esakia spaces, a mapping $f:\spa{X}\to \spa{Y}$ is called a \emph{bounded morphism} when it is continuous and both the conditions below hold whenever $\prec\in \{\leq, \sqsubset\}$.
\begin{enumerate}
    \item For all $x, y\in X$, if $x\prec y$,  then $f(x)\prec f(y)$;
    \item For all $x\in X$ and $y\in Y$, if $f(x)\prec y$, then there is some $z\in X$ such that $Rxz$ and $f(z)=y$.
\end{enumerate}
Likewise, when $\spa{X}, \spa{Y}$ are modal spaces, a mapping $f:\spa{X}\to \spa{Y}$ is called a \emph{bounded morphism} when it is continuous and both the conditions above hold for $\prec=R$. 

The categories of modalized Esakia spaces and of modal spaces, with their respective bounded morphisms, are dual to those of frontal Heying algebras and modal algebras with homomorphisms. Both these dualities are generalizations of the celebrated Stone duality between Boolean algebras and Stone spaces. Given an algebra $\alg{A}$, we construct its dual space $\dualspa{A}$ as follows. As the carrier we take the set of all prime filters on $\alg{A}$. A basis of clopens for the topology is given by all the sets of prime filters of the form $\beta(a)$ for some $a\in A$, where $\beta(a)$ is the set of all prime filters to which $a$ belongs. When $\alg{A}$ is a frontal Heyting algebra, the intuitionistic relation $\leq$ on the dual coincides with prime filter inclusion, whereas the modal relation is given by
\[x\sqsubset y:\iff \boxtimes a\in x \text{ implies } a\in y \text{ for all }a\in A.\]
When $\alg{A}$ is a modal algebra, the modal relation $R$ is defined exactly like $\sqsubset$, but substituting $\square$ for $\boxtimes$. In both cases, if $h:\alg{A}\to \alg{B}$ is a homomorphism, the dual bounded morphism $h_*:\dualspa{B}\to \dualspa{A}$ coincides with the preimage mapping $h^{-1}$. 

Conversely, if $\spa{X}$ is a space, its dual algebra $\dualalg{X}$ is constructed as follows. When $\spa{X}$ is a modalized Esakia space, the carrier is $\clopup[\leq]{X}$. The Heyting implication $\to$ and modal operator $\boxtimes$ of $\dualalg{X}$ are given by the identities 
\begin{align*}
    U\to V&:=-\downset[\leq]{(U\smallsetminus V)},\\
    \boxtimes U&:=\{x\in X: \upset[\sqsubset]{x}\subseteq U\}.
\end{align*}
On the other hand, when $\spa{X}$ is a modal space, the carrier of $\dualalg{X}$ is $\clop{X}$ and the modal operator $\square$ is defined the same way as $\boxtimes$, but substituting $R$ for $\sqsubset$. In both cases, when $f:\spa{X}\to \spa{Y}$ is a bounded morphism, the dual homomorphism $f^*:\dualalg{Y}\to \dualalg{X}$ again coincides with the preimage mapping  $f^{-1}$. Using these constructions we can prove the following duality result. 
\begin{theorem}
    The following categories are dually equivalent:\label{duality}
    \begin{enumerate}
        \item Frontal Heyting algebras with homomorphisms and modalized Esakia spaces with bounded morphisms \cite[Thm. 4.11]{CastiglioniEtAl2010OFHA};
        \item Modal algebras with homomorphisms and modal spaces with bounded morphisms \cite{jonsson1951boolean}.
    \end{enumerate}
\end{theorem}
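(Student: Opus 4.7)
The plan is to bootstrap from the two classical results that the statement generalises: the Stone duality between Boolean algebras and Stone spaces, and the Esakia duality between Heyting algebras and Esakia spaces (a Stone space with a partial order satisfying conditions (1)--(3) of the modalized Esakia definition). For both parts of \Cref{duality}, I will treat the modal operator ($\boxtimes$ or $\square$) as an enrichment of the underlying algebra and show that it corresponds precisely to the additional binary relation ($\sqsubset$ or $R$) on the dual side.

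I would first address part (2), since it is the cleaner case. The construction $\dualspa{A}$ on a modal algebra yields a Stone space by classical duality, so the task is to verify that $R$ as defined satisfies the two modal-space conditions. Condition (\ref{esa1}) follows from writing $\upset[R]{x}=\bigcap\{\beta(a):\square a\in x\}$ and invoking compactness together with the fact that $\{a:\square a\in x\}$ is closed under finite meets. Condition (\ref{esa2}) follows from the identity $\downset[R]{\beta(a)}=\beta(\Diamond a)$ where $\Diamond$ is the dual operator, ensuring clopens are sent to clopens. In the reverse direction, $\dualalg{X}=\clop{X}$ with $\square U:=\{x:\upset[R]{x}\subseteq U\}$; that $\square U$ is clopen when $U$ is clopen uses condition (\ref{esa2}) applied to $-U$, and that $\square$ satisfies the modal algebra identities is a direct check. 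Functoriality on morphisms reduces to showing the preimage of a bounded morphism is a modal homomorphism and vice versa, which is the standard back-and-forth computation using the two bounded-morphism clauses to witness the $\square$/$R$ correspondence. Natural isomorphism $\alg{A}\cong \dualalg{(\dualspa{A})}$ is the Stone map $a\mapsto \beta(a)$, with the new content being that $\beta(\square a)=\square\beta(a)$; this is the classical Jónsson--Tarski computation.

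For part (1), I would run the analogous programme on top of Esakia duality. Given a frontal Heyting algebra, the $\to$-reduct gives an Esakia space $(X,\leq,\class{O})$, and I would define $\sqsubset$ by the given prime-filter clause. The four modalized-Esakia conditions split into two groups: conditions (1)--(3) come from Esakia duality, while conditions (4)--(5) encode the frontal axioms. Condition (4), that $\boxtimes$ of a clopen up-set is a clopen up-set, follows from the first two frontal identities. The crux is condition (5), that the reflexive closure of $\sqsubset$ is $\leq$: the inclusion $\sqsubset^+\subseteq\leq$ uses $a\leq\boxtimes a$, while the converse---if $x\leq y$ and $x\neq y$ then $x\sqsubset y$---relies essentially on $\boxtimes a\leq b\lor(b\to a)$ together with prime filter separation, and I expect this to be the main obstacle. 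The argument is: given $x\subsetneq y$ in prime-filter order, pick $a\in y\smallsetminus x$; for any $\boxtimes c\in x$ one must derive $c\in y$, which is achieved by instantiating the fourth axiom with $b:=a$ and using primeness to force either $a\in x$ (contradiction) or $(a\to c)\in x$, whence $c\in y$.

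For the reverse construction $\dualalg{X}$ on a modalized Esakia space, the carrier $\clopup[\leq]{X}$ is again by Esakia duality, and one checks that $\boxtimes U:=\{x:\upset[\sqsubset]{x}\subseteq U\}$ lies in $\clopup[\leq]{X}$ and satisfies the four frontal identities; the last identity $\boxtimes a\leq b\lor(b\to a)$ is forced by condition (5) of the space, since any point in $\boxtimes U$ not in $V$ either equals or $\sqsubset$-sees only points in $U$, forcing membership in $V\to U$. Functoriality and naturality of the units then proceed exactly as in the classical Esakia case, with the additional verification that bounded morphisms preserve/reflect $\sqsubset$ iff their dual preserves $\boxtimes$---an immediate translation of the prime-filter definition of $\sqsubset$. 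The main technical difficulty throughout is keeping the interaction between $\sqsubset$ and $\leq$ coherent, especially the non-symmetric way in which (5) is used in both directions.
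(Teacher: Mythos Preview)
Your proposal is correct and is essentially the standard argument one finds in the cited sources. Note, however, that the paper itself does not prove this theorem at all: it simply states the result and attributes part (1) to \cite[Thm.~4.11]{CastiglioniEtAl2010OFHA} and part (2) to \cite{jonsson1951boolean}, with no accompanying proof environment. So there is no ``paper's own proof'' to compare against; you have supplied what the paper outsources to the literature, and your sketch---bootstrapping from Stone/Esakia duality and verifying that the extra relation $\sqsubset$ (resp.\ $R$) corresponds to $\boxtimes$ (resp.\ $\square$), with the key step for condition~(5) using $\boxtimes a\leq b\lor(b\to a)$ and primeness exactly as you describe---is precisely the argument carried out in those references.
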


 A \emph{sim-Kripke frame} is defined as a triple $\spa{X}=(X, \leq, \sqsubset)$ that satisfies all the non-topological conditions from the definition of a modalized Esakia space. Likewise, a \emph{clm-Kripke frame} is a triple $\spa{X}=(X, R)$ that satisfies all the non-topological conditions from the definition of a modal space. It will be convenient to treat Kripke frames as equipped with the discrete topology. Thus, when $\spa{X}$ is a Kripke frame, $\clop{X}$ refers to $\wp(\spa{X})$ and $\clopup[\prec]{X}$ refers to all the upsets (relative to $\prec$) in $\spa{X}$. Thus the two notions of bounded morphism defined for spaces readily apply to Kripke frames. 

 The constructions underwriting \Cref{duality} can be adapted to obtain duality results connecting Kripke frames and algebras of appropriate kinds. In the algebras-to-frames direction, the main change is to take the set of \emph{completely join prime filters} instead of the set of prime filters as the carrier. Accordingly, the Stone map $\beta$ is replaced with the mapping $\alpha$ that sends an element of an algebra to the set of completely join prime filters containing it. The other direction is essentially unchanged, given the conventions just mentioned about treating Kripke frames as equipped with the discrete topology. 
 \begin{theorem}
		The following categories are dually equivalent: \label{kripkedual}
		\begin{enumerate}
			\item $\sim$-Kripke frames with  bounded morphisms and  complete, completely distributive and completely join prime generated Heyting  algebras with complete homomorphisms; \label{kripkedual-h}
            \item $\clm$-Kripke frames with bounded morphisms and complete, atomic, completely additive and completely distributive modal algebras with complete homomorphisms.\label{kripkedual-m}
		\end{enumerate}
	\end{theorem}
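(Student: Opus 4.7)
The plan is to imitate the proof of Theorem \ref{duality} step by step, but regard each Kripke frame as a modalized Esakia space (resp. modal space) carrying the discrete topology, and on the algebraic side identify which algebras arise as duals. With the discrete topology every subset is clopen and every upset is in $\clopup[\leq]{X}$, so the dualizing functors of Theorem \ref{duality} still make sense: a $\sim$-Kripke frame $\spa{X}=(X,\leq,\sqsubset)$ goes to the algebra $\dualalg{X}$ of all $\leq$-upsets with the operations already defined (intersection, union, $\to$ given by $-\!\downset[\leq]{(U\smallsetminus V)}$, and $\boxtimes U=\{x:\upset[\sqsubset]{x}\subseteq U\}$); a $\clm$-Kripke frame goes to $\wp(X)$ with $\square U=\{x:\upset[R]{x}\subseteq U\}$. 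Bounded morphisms dualize to $f^{-1}$ as before, which is now a \emph{complete} homomorphism since it commutes with arbitrary unions and intersections.

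In the frame-to-algebra direction I would verify that these duals have exactly the claimed properties. For \Cref{kripkedual-h}: $\dualalg{X}$ is a complete lattice, and completely distributive because $\wp(X)$ is and upsets form a sublocale closed under arbitrary unions and intersections; the completely join prime elements are precisely the principal upsets $\upset[\leq]{x}$, and every upset is the union of these, giving complete join prime generation. For \Cref{kripkedual-m}: $\wp(X)$ is complete, atomic (with atoms the singletons) and completely distributive; since $\square$ is defined from a relation, it preserves arbitrary meets, which dually means its diamond is completely additive. In the algebra-to-frame direction, I would replace the prime spectrum construction with the set of completely join prime elements of $\alg{A}$ (equivalently, completely join prime filters), replace the Stone embedding $\beta(a)=\{x:a\in x\}$ by $\alpha(a)=\{p\text{ completely join prime}:p\leq a\}$, and use the same defining clauses as in Theorem \ref{duality} to put $\leq$ and $\sqsubset$ (resp.\ $R$) on the set of completely join primes, now restricted to those special elements. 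Checking the first-order frame conditions is then a routine combinatorial verification.

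The units and counits of the adjunction are as follows: the counit $\spa{X}\to ((\dualalg{X})_*)$ sends $x$ to its principal upset $\upset[\leq]{x}$ (or to $\{x\}$ in the modal case), and it is an isomorphism of Kripke frames because principal upsets are exactly the completely join prime elements of $\dualalg{X}$ and the relations match by construction. The unit $\alg{A}\to (\dualspa{A})^*$ is $\alpha$; it is injective precisely because $\alg{A}$ is completely join prime generated (resp.\ atomic), surjective because of complete distributivity (every upset of completely join primes is of the form $\alpha(a)$ for $a=\bigvee$ of that set), and preserves $\boxtimes$ (resp.\ $\square$) thanks to the defining condition of $\sqsubset$ (resp.\ $R$).

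The main obstacle is pinning down exactly why the chosen algebraic properties characterize the duals of Kripke frames: namely, that complete distributivity plus complete join prime generation yields an application of Raney's theorem guaranteeing $\alpha$ is a bijection onto the upset algebra of the completely join primes, and that in the modal case atomicity together with complete additivity of the dual of $\square$ ensures the relation reconstructed on the atoms fully determines $\square$. Once these algebraic recognition lemmas are in place, naturality of $\alpha$ and of the principal-upset map in bounded morphisms / complete homomorphisms is straightforward and parallels the classical proof.
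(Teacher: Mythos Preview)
The paper does not actually prove this theorem: it simply cites Thomason for the $\clm$ case and remarks that the $\sim$ case is a straightforward consequence of de Jongh's duality for intuitionistic Kripke frames. Your proposal, by contrast, gives a genuine proof sketch along the standard lines---discretize the Esakia/modal-space duality, identify the algebraic side via complete join primes (resp.\ atoms), and invoke Raney-type recognition to show the unit $\alpha$ is an isomorphism. This is correct and is essentially how the cited references proceed, so you are supplying the details the paper omits rather than taking a different route.

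One small remark: the theorem as stated in the paper speaks only of ``Heyting algebras'' in \Cref{kripkedual-h}, with no explicit condition on the $\boxtimes$ operator, whereas a $\sim$-Kripke frame carries both $\leq$ and $\sqsubset$ and its dual is a frontal Heyting algebra. Your sketch correctly treats the $\boxtimes$ structure, but you might note that a condition analogous to complete additivity (namely that $\boxtimes$ preserves arbitrary meets, or equivalently that its left adjoint exists) is needed on the algebraic side to recover $\sqsubset$ from $\boxtimes$ on completely join prime filters; this is implicit in your final paragraph but is not reflected in the paper's statement of the theorem.
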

 This result was first proved in \cite{Thomason1975CoFfML} for $\clm$-Kripke frames, see also \cite{Litak2008SotBT}. That it holds for $\sim$-Kripke frames appears to be a new observation, though the proof is a straightforward consequence of a similar result due to de Jongh \cite{Jongh1966OtCoPOSwSPBA} for intuitionistic Kripke frames. 

A \emph{sim-interpretation} is a mapping $V:\mathit{Frm}_{\mathit{sim}}\to \clopup[\leq]{X}$, where $\spa{X}$ is a modalized Esakia space or a \emph{sim}-Kripke frame, that commutes the right way with connectives, in particular:
\begin{align*}
    V(\varphi\to \psi)&=-\downset[\leq]{(V(\varphi)\smallsetminus V(\psi))},\\
    V(\boxtimes \varphi)&:=\{x\in X: \upset[\sqsubset]{x}\subseteq V(\varphi)\}.
\end{align*}
A \emph{clm-interpretation} is a mapping $V:\mathit{Frm}_{\mathit{clm}}\to \clop{X}$, where $\spa{X}$ is a modal space or a \emph{clm}-Kripke frame, that commutes the right way with connectives, in particular:
\[ V(\square \varphi)=\{x\in X: \upset[R]{x}\subseteq V(\varphi)\}.\]
In either case we write $\spa{X}, V, x\models \varphi$ to mean that $x\in V(\varphi)$, and $\spa{X}, V\models \varphi$ to mean that $\spa{X}, V, x\models \varphi$ holds for all $x\in X$. We write $\spa{X}, V\models \Gamma/\Delta$ to mean the following: if $\spa{X}, V\models \gamma$ for each $\gamma\in \Gamma$, then $\spa{X}, V\models \delta$ for \emph{some} $\delta\in \Delta$. A rule $\Gamma/\Delta$ is said to be \emph{valid} in a space or Kripke frame $\spa{X}$ when $\spa{X}, V\models \Gamma/\Delta$ holds for all valuations of the appropriate sort, otherwise it is \emph{refuted}. We write $\spa{X}\models\Gamma/\Delta$ to mean that $\Gamma/\Delta$ is valid in $\spa{X}$, and $\spa{X}\not\models\Gamma/\Delta$ to mean it is refuted in $\spa{X}$. The notions of validity and refutability can be straightforwardly generalized to classes of spaces or Kripke frames; we use similar notation for these cases.

\subsection{Properties of Transitive Structures}

We list here some basic properties of spaces and Kripke frames we will appeal to throughout the paper. Let $\spa X$ be a $\logic{K4.Grz}$-space and $U\in \op{Clop}(\spa X)$. We define:
\begin{align*}
    \max{U}&:=\{x\in U: \upset[R]{x}\cap U\subseteq\{x\}\},\\
    \pas{U}&:=\{x\in U: \text{ if $Rxy$ and $y\notin U$, then $\upset[R]{y}\cap U=\varnothing$}.\}
\end{align*}
We call elements of $\max{U}$ \emph{maximal} in $U$, and elements of $\pas{U}$ \emph{passive} in $U$.\footnote{The terminology of ``passive'' points is due to Esakia \cite{Esakia2019HADT}.} Intuitively the maximal elements of $U$ are those that do not ``see'' elements of $U$ other than, at most, themselves, and the passive elements of $U$ are those elements of $U$ from which it is impossible to ``leave'' and ``re-enter'' $U$. We define analogous notions for modalized Esakia spaces, substituting $\sqsubset$ for $R$. 

\begin{proposition}
    Let $\spa X$ be a $\logic{K4.Grz}$-space and let $U\in \clop{X}$. Then $\max{U}$ is closed and $\pas{U}\in \clop{X}$.\label{grzclosed}
\end{proposition}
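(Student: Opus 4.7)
My plan is to handle the two parts separately. Both reduce to the Stone-space structure together with the modal-space axiom \emph{(esa2)} that $\downset[R]{V}\in \clop{X}$ whenever $V\in \clop{X}$; the Grzegorczyk condition is not actually needed for this particular statement, but the proposition is phrased for $\logic{K4.Grz}$-spaces because that is the setting in which the sets $\max{U}$ and $\pas{U}$ will later play a role. The passive case will fall out from an explicit set-theoretic identity, and the maximality case from a clopen-separation argument that shows $X\smallsetminus \max{U}$ is open.

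For $\pas{U}$ I would first unpack the definition and verify the identity
\[\pas{U} \;=\; U\,\smallsetminus\, \downset[R]{\bigl((X\smallsetminus U)\cap \downset[R]{U}\bigr)}.\]
Indeed $x$ is \emph{not} passive in $U$ exactly when there exist $y,z$ with $Rxy$, $y\notin U$, $Ryz$, and $z\in U$; rewriting the inner existential as $y\in \downset[R]{U}$ gives the right-hand side. Every ingredient on the right is clopen: $U$ and its complement are clopen by assumption, $\downset[R]{U}$ is clopen by \emph{(esa2)}, and applying \emph{(esa2)} once more to the clopen set $(X\smallsetminus U)\cap \downset[R]{U}$ yields clopenness of the outer downset. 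Hence $\pas{U}$ is the difference of two clopens and therefore lies in $\clop{X}$.

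For $\max{U}$ I would show that $X\smallsetminus \max{U}$ is open by producing a clopen neighbourhood in $X\smallsetminus \max{U}$ around each of its points. Points $x\notin U$ are immediate, since $X\smallsetminus U$ is itself a clopen neighbourhood. So take $x\in U\smallsetminus \max{U}$, fixing a witness $y\in \upset[R]{x}\cap U$ with $y\neq x$. Using Hausdorffness together with the clopen basis of $\spa X$, separate $x$ from $y$ by a clopen $V$ with $y\in V\subseteq U$ and $x\notin V$. Then $W:=\downset[R]{V}\cap (X\smallsetminus V)$ is a clopen neighbourhood of $x$, and I claim $W\subseteq X\smallsetminus \max{U}$: any $z\in W$ with $z\in U$ sees some $w\in V\subseteq U$, and $w\neq z$ because $w\in V$ and $z\notin V$, so $\upset[R]{z}\cap U\not\subseteq \{z\}$; points $z\in W$ with $z\notin U$ are trivially outside $\max{U}$. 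The only mild subtlety is the clopen separation of $x$ from $y$ inside $U$, which is routine given the Stone-space structure; I do not expect any genuine obstacle.
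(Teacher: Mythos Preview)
Your argument is correct. The paper's own proof is merely a citation to \cite[Thm.~3.2.1, 3.5.5]{Esakia2019HADT}, with no details spelled out, so your write-up is strictly more informative.

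A brief comparison: the paper defers both claims to Esakia's monograph, whereas you give a self-contained argument using only the Stone-space structure and the modal-space axiom that $\downset[R]{V}$ is clopen for clopen $V$. Your set-theoretic identity for $\pas{U}$ is exactly the standard one, and your clopen-separation argument for $\max{U}$ is the natural direct proof. Your side remark that the $\logic{K4.Grz}$ hypothesis is not actually used is also correct and worth keeping: both closure statements hold in any modal space, and the Grzegorczyk axiom only enters later (e.g., in \Cref{seemaximals}, where one needs that maximal points are actually reached). So what you lose relative to the paper is brevity; what you gain is a proof that does not send the reader to another book and that makes the minimal hypotheses explicit.
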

\begin{proof}
    Follows from {\cite[Thm. 3.2.1, 3.5.5]{Esakia2019HADT}}.
\end{proof}
\begin{proposition}
    Let $\spa X$ be a modalized Esakia space. For all $U\in \clop{X}$ and any $x\in X$, if $\upset[\leq]{x}\cap U\neq\varnothing$, then $\upset[\leq]{x}\cap \max{U}\neq\varnothing$. Consequently, if $\upset[\sqsubset]{x}\cap U\neq\varnothing$, then $\upset[\sqsubset]{x}\cap \max{U}\neq\varnothing$.
    The claim remains true if we let $\spa X$ be a $\logic{K4.Grz}$-space and substitute $R^+$ for $\leq$ and $R$ for $\sqsubset$.\label{seemaximals}
\end{proposition}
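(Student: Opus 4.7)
The plan is to produce the required maximal element via a topological Zorn argument on $\upset[\leq]{x}\cap U$, using that $\sqsubset\subseteq\leq$ (which follows from condition~(5) of a modalized Esakia space) to deduce cheaply that a $\leq$-maximal element of this set lies in $\max{U}$.

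First I would check the Zorn hypothesis for $(\upset[\leq]{x}\cap U,\leq)$. The set is nonempty by assumption and closed, being the intersection of the closed set $\upset[\leq]{x}$ (condition~(2) of a modalized Esakia space) with the clopen $U$. Given a $\leq$-chain $C\subseteq\upset[\leq]{x}\cap U$, the family $\{\upset[\leq]{c}\cap U:c\in C\}$ consists of closed subsets of the compact space $\spa X$ and has the finite intersection property (the $\leq$-largest element of any finite subchain witnesses every finite intersection), so by compactness it has a common point $z$, which is the required upper bound. Zorn then yields a $\leq$-maximal $y\in\upset[\leq]{x}\cap U$. Transitivity of $\leq$ gives $\upset[\leq]{y}\cap U\subseteq\upset[\leq]{x}\cap U$, and maximality of $y$ (together with antisymmetry of $\leq$, which is standard for Esakia-style orders) forces $\upset[\leq]{y}\cap U=\{y\}$. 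Since $\sqsubset\subseteq\leq$, this immediately shrinks to $\upset[\sqsubset]{y}\cap U\subseteq\{y\}$, i.e., $y\in\max{U}$, settling the first claim.

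For the second claim, suppose $\upset[\sqsubset]{x}\cap U\neq\varnothing$. Then $\upset[\leq]{x}\cap U\neq\varnothing$ as well, so the previous step delivers $y\in\upset[\leq]{x}\cap\max{U}$. A short case split finishes the argument: if $y\neq x$, condition~(5) upgrades $x\leq y$ to $x\sqsubset y$, so $y\in\upset[\sqsubset]{x}\cap\max{U}$; if $y=x$, then $x\in\max{U}$ already, and combining this with the hypothesis $\upset[\sqsubset]{x}\cap U\neq\varnothing$ forces $\upset[\sqsubset]{x}\cap U=\{x\}$, hence $x\sqsubset x$, whence $x\in\upset[\sqsubset]{x}\cap\max{U}$. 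The $\logic{K4.Grz}$ version runs mutatis mutandis with $R^+$ in place of $\leq$ and $R$ in place of $\sqsubset$: the set $\upset[R^+]{x}=\{x\}\cup\upset[R]{x}$ is closed by the modal space axioms and Hausdorffness, and $R^+$ is by definition the reflexive closure of $R$, so both the Zorn step and the case analysis go through unchanged.

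The only nontrivial point is the verification of the upper-bound condition for Zorn's lemma, which is the standard topological compactness argument above; everything else is bookkeeping about the interplay between $\leq$ and $\sqsubset$ (respectively $R^+$ and $R$).
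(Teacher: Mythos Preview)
Your Zorn--compactness argument for the modalized Esakia case is essentially the content of the result the paper cites (Esakia's Thm.~3.2.3), and your derivation of the $\sqsubset$-version from the $\leq$-version mirrors the paper's own case split. So far so good.

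The gap is in the $\logic{K4.Grz}$ case. You claim it ``runs mutatis mutandis'', but your argument hinges on antisymmetry of the preorder, and $R^+$ is \emph{not} antisymmetric in general $\logic{K4.Grz}$-spaces: the paper's own example (the $\logic{GL}$-space in the figure with the proper cluster $\{\omega_0,\omega_1\}$) already has $\omega_0\,R^+\,\omega_1\,R^+\,\omega_0$ with $\omega_0\neq\omega_1$. On a genuine preorder, Zorn (applied to the quotient poset) only hands you an element of a maximal \emph{cluster} of $\upset[R^+]{x}\cap U$; without antisymmetry you cannot conclude $\upset[R^+]{y}\cap U=\{y\}$, and hence not $\upset[R]{y}\cap U\subseteq\{y\}$. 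To see that the axiom really is needed, note that on the $\logic{K4}$-space consisting of a single two-point cluster one has $\max{X}=\varnothing$, so the proposition fails outright; any correct proof must therefore invoke the $\logic{K4.Grz}$ axiom somewhere, and yours never does. The fix is to use the Grzegorczyk property to show that a maximal cluster of a clopen cannot be proper (this is precisely what the Esakia reference supplies in the modal setting), after which your case analysis for the $R$-version goes through as written.
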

\begin{proof}
    In both versions of the statement, the first part follows from \cite[Thm. 3.2.3]{Esakia2019HADT}. For the second part, assume  $\upset[\sqsubset]{x}\cap U\neq\varnothing$. If $x\in \max U$, then $\upset[\sqsubset]{x}=\{x\}$ and we are done, so suppose otherwise. We have $\upset[\leq]{x}\cap U\neq\varnothing$, so $\upset[\leq]{x}\cap \max U\neq\varnothing$ by the first part. This means there is $y\in \max U$ with $x\leq y$. Since $x\notin \max U$, we have $x\neq y$, and so $x\sqsubset y$. The modal case is analogous. 
\end{proof}

\begin{proposition}
    The following conditions hold:
    \begin{enumerate}
        \item A modalized Esakia space $\spa{X}$ is a $\logic{KM}$-space iff no point $x\in \max{U}$ is such that $x\sqsubset x$, for every $U\in \clopdown[\leq]{X}$;
        \item A $\logic{K4.Grz}$-space $\spa{X}$ is a $\logic{GL}$-space iff no point $x\in \max{U}$ is such that $Rx x$, for every $U\in \clop{X}$.
    \end{enumerate} \label{maxirr}
\end{proposition}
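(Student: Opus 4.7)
The plan is to reduce both items to a concrete dual condition on the relation, namely: for every clopen $V$ of the appropriate kind, every $x$ with $\upset[\sqsubset]{x}\cap V\neq\varnothing$ (resp.\ $\upset[R]{x}\cap V\neq\varnothing$) has a successor $y$ in $V$ with $\upset[\sqsubset]{y}\cap V=\varnothing$ (resp.\ $\upset[R]{y}\cap V=\varnothing$). The key observation is that this last clause is equivalent to ``$y\in\max V$ and $y$ is irreflexive,'' because by definition $\max V=\{y\in V:\upset{y}\cap V\subseteq\{y\}\}$, so $\upset{y}\cap V=\varnothing$ is the same as $y\in\max V$ together with $y\notin\upset{y}$. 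Proposition \ref{seemaximals} then handles one direction and a two-case analysis handles the other.

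For (1), I would first unpack $\spa{X}\models /(\boxtimes p\to p)\to p$ using the definitions of $\to$ and $\boxtimes$ on a modalized Esakia space: letting $U=V(p)\in\clopup[\leq]{X}$, this is equivalent to $V(\boxtimes p\to p)\subseteq U$, i.e.\ to the claim that no $x\notin U$ lies below some $y\notin U$ with $\upset[\sqsubset]{y}\subseteq U$ being \emph{absent}. Setting $W=X\smallsetminus U\in\clopdown[\leq]{X}$ and rearranging, validity becomes: for every clopen downset $W$ and every $x\in W$, there exists $y\geq x$ with $y\in W$ and $\upset[\sqsubset]{y}\cap W=\varnothing$. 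For the right-to-left implication, given $x\in W$, Proposition \ref{seemaximals} yields $y\in\max W$ with $x\leq y$; by hypothesis $y\not\sqsubset y$, hence $\upset[\sqsubset]{y}\cap W=\varnothing$. For the left-to-right implication, I argue by contraposition: if $W$ is a clopen downset and $x\in\max W$ with $x\sqsubset x$, apply validity to this pair to get $y\geq x$, $y\in W$, $\upset[\sqsubset]{y}\cap W=\varnothing$. If $y=x$ we contradict $x\in\upset[\sqsubset]{x}\cap W$; if $y\neq x$ then $x\sqsubset y$ (since the reflexive closure of $\sqsubset$ is $\leq$), contradicting $x\in\max W$.

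Part (2) is entirely analogous on the classical side. Unfolding $\spa{X}\models/\square(\square p\to p)\to\square p$ with $V(p)=U\in\clop{X}$ and taking $W=X\smallsetminus U$, validity becomes: for every clopen $W$, if $\upset[R]{x}\cap W\neq\varnothing$ then there exists $y\in\upset[R]{x}\cap W$ with $\upset[R]{y}\cap W=\varnothing$. The right-to-left direction again uses the $\logic{K4.Grz}$ version of Proposition \ref{seemaximals} to produce $y\in\max W$ above $x$, with $y\not Ry$ by hypothesis forcing $\upset[R]{y}\cap W=\varnothing$. The left-to-right direction, by contraposition, supposes $x\in\max W$ with $Rxx$; since then $x\in\upset[R]{x}\cap W$, validity produces $y$ as above, and the two subcases $y=x$ and $y\neq x$ (in which $Rxy$ holds directly) both contradict the choice of $x$ exactly as in (1).

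The only real content is the translation of the Löb-style axiom into the dual first-order condition and the identification of its witness with ``maximal plus irreflexive.'' Once that is done the argument is routine; the mild subtlety to watch for is that in (1) we restrict to clopen \emph{downsets}, which is what makes the translation of $\to$ clean (since clopen downsets are precisely the complements of clopen upsets), while in (2) arbitrary clopens suffice because the classical $\to$ is Boolean.
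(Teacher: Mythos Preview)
Your proof is correct. The approach differs from the paper's in presentation, though the underlying ideas are close. The paper argues the $(\Rightarrow)$ direction by directly constructing a refuting valuation: given a $\sqsubset$-reflexive $x\in\max U$ with $U$ a clopen downset, it sets $V(p)=-U$ and checks by cases that $x\models\boxtimes p\to p$ while $x\not\models p$ (and analogously with $\lozenge$-reasoning in the $\logic{GL}$ case). You instead first translate validity of the L\"ob axiom into the explicit first-order condition ``every point of $W$ sees a point of $W$ with no $\sqsubset$-successor in $W$'' and then argue both directions against that reformulation. Your route makes the role of Proposition~\ref{seemaximals} in the $(\Leftarrow)$ direction completely transparent, whereas the paper's treatment of that direction is extremely terse. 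Conversely, the paper's explicit valuation is perhaps more concrete for $(\Rightarrow)$.

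One small stylistic point: in your left-to-right argument you say you ``argue by contraposition'' but then \emph{apply} validity to the hypothetical reflexive maximal point and derive a contradiction. That is a proof by contradiction (assume validity \emph{and} a reflexive maximal point, reach absurdity), not a contrapositive. The mathematics is unaffected; just relabel.
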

\begin{proof}
    \emph{$\logic{KM}$ case}. $(\Rightarrow)$ Let $\spa X$ be a modalized Esakia space, let $U\in \clopdown{X}$ and assume $x\in \max{U}$. Suppose towards a contradiction that $x\sqsubset x$. Let $V(p)=-U$. Then $V(p)\in \clopup{X}$. If $y\in \upset[\sqsubset]{x}$, then either $x=y$, in which case $\spa X, V, y\not\models \boxtimes p$, or $x\neq y$, so both $\spa X, V, y\models \boxtimes p$ and $\spa X, V, y\models  p$. Either way, $\spa X, V, x\models \boxtimes p\to p$, yet $\spa X, V, x\not\models p$. This contradicts the $\logic{KM}$ axiom $(\boxtimes p\to p)\to p$.

    $(\Leftarrow)$ Assume $\spa X$ is not a $\logic{KM}$-space. Then there is a valuation $V$ and a point $x\in X$ such that  $\spa X, V, x\models \boxtimes p\to p$ and $\spa X, V, x\not\models p$. This can only happen if $x\not\sqsubset x$. 
    
    \emph{$\logic{GL}$ case}. $(\Rightarrow)$  Let $\spa X$ be a $\logic{GL}$-space, let $U\in \clop{X}$ and assume $x\in \max{U}$. Suppose towards a contradiction that $Rxx$. Define $V(p)=U$. Then $\spa X, V, x\models \lozenge p$. However, if $y\in \upset[R]{x}$, then either $x=y$, in which case  $\spa X, V, y\not\models \square \neg p$, or $x\neq y$, in which case $\spa X, V, y\not\models p$. Either way, $\spa X, V, y\not\models \square \neg p\land p$, showing $\spa X, V, x\not\models \lozenge (\square \neg p\land p)$. This contradicts the $\logic{GL}$-axiom $\square (\square p\to p)\to \square p$.

    $(\Leftarrow)$ Assume $\spa X$ is not a $\logic{GL}$-space. Then there is a valuation $V$ and a point $x\in X$ such that  $\spa X, V, x\models \square(\square p\to p)$ and $\spa X, V, x\not\models\square p$. This can only happen if $Rx x$ fails. 
\end{proof}

\begin{proposition}
    Let $\spa X$ be a Kripke frame for either $\logic{KM}$ or $\logic{GL}$. Then the modal relation in $\spa X$ is conversely well-founded (hence irreflexive.) \label{maxirrkripke}
\end{proposition}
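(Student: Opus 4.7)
The approach is axiom refutation by Kripke valuations on the hypothetical infinite ascending chain. Suppose that in $\spa X$ there is an infinite ascending chain $y_0 \prec y_1 \prec y_2 \cdots$, where $\prec$ denotes the modal relation ($\sqsubset$ for $\logic{KM}$, $R$ for $\logic{GL}$). In each case I will exhibit a valuation that refutes the defining rule of the logic at $y_0$, contradicting the assumption that $\spa X$ is a Kripke frame for that logic. The irreflexivity clause then comes for free: a reflexive point $y$ would yield the infinite chain $y \prec y \prec y \cdots$.

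For the $\logic{GL}$ case I take the classical valuation $V(p) := X \smallsetminus \{y_i : i \geq 1\}$. Then $y_1$ witnesses $y_0 \not\models \square p$, while for any $R$-successor $z$ of $y_0$ one has either $z \in V(p)$ (so $z \models \square p \to p$ trivially), or else $z = y_i$ for some $i \geq 1$, in which case $y_{i+1}$ is an $R$-successor of $z$ outside $V(p)$, so again $z \models \square p \to p$. Hence $y_0 \models \square(\square p \to p)$ while $y_0 \not\models \square p$, refuting the $\logic{GL}$-axiom.

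For the $\logic{KM}$ case I take $V(p) := X \smallsetminus \downset[\leq]{\{y_i : i \geq 0\}}$; since the complement of a $\leq$-downset is a $\leq$-upset, this is a legitimate sim-interpretation. Evidently $y_0 \not\models p$. To verify $y_0 \models \boxtimes p \to p$ one checks, at every $y' \geq y_0$ with $y' \not\models p$, that some $\sqsubset$-successor of $y'$ also fails $p$. Such a $y'$ satisfies $y' \leq y_j$ for some $j$, and since the reflexive closure of $\sqsubset$ equals $\leq$, either $y' = y_j$ (in which case $y_{j+1}$ is the required witness) or $y' \sqsubset y_j$ (in which case $y_j$ itself is). This refutes the $\logic{KM}$-axiom $(\boxtimes p \to p) \to p$ at $y_0$.

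The only point requiring genuine care is the $\logic{KM}$ case, because intuitionistic implication demands that the argument be run uniformly over all $\leq$-successors of $y_0$; this is precisely why the refuting valuation is built from the $\leq$-downset of the chain rather than from the chain itself, and why the identity between $\leq$ and the reflexive closure of $\sqsubset$ is invoked. Beyond this, the verifications are entirely routine, as is the implication from converse well-foundedness to irreflexivity.
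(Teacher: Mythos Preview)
Your argument is correct and is precisely the standard approach the paper points to (Blackburn et al.\ for $\logic{GL}$, Litak for $\logic{KM}$); the paper itself gives no details beyond those references and the remark that the reasoning parallels \Cref{maxirr}. Your handling of the $\logic{KM}$ case---taking the $\leq$-downset of the chain and invoking that $\leq$ is the reflexive closure of $\sqsubset$---is exactly the right adaptation to the intuitionistic setting.
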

\begin{proof}
    The arguments are very similar to those given above. See, e.g.,  \cite[Cor. 3]{Litak2014CMwPS} for the $\logic{KM}$-case and \cite[Ex. 3.9]{BlackburnEtAl2001ML}.
\end{proof}

Lastly, when $\spa X$ is a space and $\prec$ is its modal relation, a non-empty set $C\subseteq X$ is called a \emph{cluster} when it is maximal with the property that, if $x, y\in C$, then both $x\prec y$ and $y\prec x$. A set $U\subseteq X$ is said to \emph{cut} a cluster $C\subseteq X$ when neither $C\subseteq U$ nor $C\cap U=\varnothing$ holds. A cluster is called \emph{proper} when its cardinality is greater than one, \emph{improper} otherwise.

\section{Filtration and Pre-filtration}

So much for preliminaries. We now move on to discussing our notion of \emph{pre-filtration}. As noted earlier, pre-filtration is motivated by the fact that standard filtration does not work well for $\logic{KM}$ and $\logic{GL}$. So let us begin by reviewing the standard notion of filtration. 

\subsection{Stable and Pre-stable Maps}

We begin by introducing some key definitions needed for our discussion of filtration and pre-filtration. Since we are mainly interested in the correspondence between $\sim$- and $\clm$-rule systems, we will henceforth restrict attention to $\logic{K4}$-algebras in the $\clm$ case. 

Throughout the paper we will often think of $\logic{K4}$-algebras as bimodal algebras in a signature expanded with an extra modal operator $\square^+$, defined by $\square^+:=\square a\land a$ \cite{Litak2014CMwPS}. This perspective will help us emphasize the connection between $\logic{K4}$-algebras and frontal Heyting algebras. Note that the identity
\[\square a={\square^+}\square \square^+ a\label{mix}\tag{\emph{mix}}\]
always holds in any $\logic{K4}$-algebra. 

\begin{definition}[Stable embedding]
    Let $\alg{A, B}$ be either frontal Heyting algebras or $\logic{K4}$-algebras. An injection $h:\alg{A}\to \alg{B}$ is called a \emph{stable embedding} when the following conditions hold:
    \begin{enumerate}
        \item \emph{Frontal Heyting case}: $h$ is a bounded distributive lattice embedding and $h(\boxtimes a)\leq \boxtimes h(a)$ holds for each $a\in A$;
        \item \emph{$\logic{K4}$-case}: $h$ is a Boolean embedding and $h(\square a)\leq \square h(a)$ holds for each $a\in A$.
    \end{enumerate}
\end{definition}
Note that whenever $h:\alg{H}\to \alg{K}$ is a distributive lattice embedding between frontal Heyting algebras, the inequality $h(a\to b)\leq h(a)\to h(b)$ always holds for all $a, b\in H$. Moreover,  whenever $h:\alg{M}\to \alg{N}$ is a stable embedding between $\logic{K4}$-algebras, the inequality $h(\square^+ a)\leq \square^+ h(a)$ always holds. Thus, a stable embedding partially preserves both $\to$ and $\boxtimes$ in the $\sim$-case, and both $\square^+$ and $\square$ in the $\clm$-case. 

Let $\alg{A}$ be an algebra. A \emph{unary domain} on $\alg{A}$ is simply a finite subset $D\subseteq A$, whereas a \emph{binary domain} on $\alg{A}$ is a finite subset $D\subseteq A\times A$.
\begin{definition}[Bounded domain condition]
    Let $\alg{A, B}$ be either frontal Heyting algebras or $\logic{K4}$-algebras. When $\odot$ is a unary or binary operator on $A$ (primitive or compound), we say that a map $h:\alg{A}\to \alg{B}$ satisfies the \emph{$\odot$-bounded domain condition} (BDC$^\odot$) for a domain $D$ of appropriate arity when the identities
\begin{align*}
    h(\odot a)&=\odot h(a)      &\text{if $\odot$ is unary},\\
    h({a{\odot} b})&={h(a){\odot }h(b)  }   &\text{otherwise}
\end{align*}
hold for every element of $D$. 
\end{definition}
In other words, $h$ satisfies the BDC$^\odot$ for a domain when it fully preserves $\odot$ on elements that belong to the domain. 

We now give a dual description of stable embeddings and the BDC. 
\begin{definition}[Stable map, dual]
    Let $\spa{X, Y}$ be modalized Esakia spaces, $\logic{K4}$-spaces, $\sim$- or $\clm$-Kripke frames. A map $f:\spa{X}\to \spa{Y}$ is called \emph{stable} when it is continuous and preserves all the primitive relations of $\spa{X}$---so both $\leq$ and $\sqsubset$ in the $\sim$ case and $R$ in the $\clm$ case. 
\end{definition}
The requirement that stable maps preserve $\leq$ is strictly speaking redundant, since any map between modalized Esakia spaces or $\sim$-Kripke frames that preserves $\sqsubset$ automatically preserves $\leq$. Furthermore, in the $\clm$ case, it is clear that stable maps also preserve the reflexive closure $R^+$ of $R$. 

If $\spa{X}$ is a modalized Esakia space or $\sim$-Kripke frame, a \emph{unary domain} on $\spa{X}$ is a finite subset $\spa{D}\subseteq\clopdown[\leq]{X}$, whereas a \emph{binary domain} is a finite subset $\spa{D}\subseteq\clop{X}$ where each $\spa{d}\in \spa{D}$ is of the form $U\cap -V$, with both $U, V\in \clopup[\leq]{X}$. Equivalently, each $\spa{d}$ must equal the intersection of a clopen upset with a clopen downset. On the other hand, if $\spa{X}$ is a transitive modal space or $\clm$-Kripke frame, a \emph{unary domain} on $\spa{X}$ is just a finite subset $\spa{D}\subseteq\clop{X}$.
\begin{definition}[Bounded domain condition, dual]
    Let $\spa{X, Y}$ be spaces or Kripke frames, let $\prec$ be any binary relation on $\spa{X}$ and let $\spa{D}$ be a unary or binary domain on $\spa{Y}$. We say that a stable map $f:\spa{X}\to \spa{Y}$ satisfies the \emph{$\prec$-bounded domain condition} (BDC$^\prec$) for $\spa{D}$ when the following holds: for each $x\in X$ and any $\spa{d}\in \spa{D}$, if there is some $y\in \spa{d}$ such that $f(x)\prec y$, then there must be some $z\in X$ such that $x\prec z$ and $f(z)\in \spa{d}$.
\end{definition}
Notice that when $\spa{Y}$ is finite and $\spa{D}$ consists precisely of the singletons of points in $Y$, $f$ is stable and satisfies the BDC$^\prec$ for $\spa{D}$ precisely when it is a bounded morphism with respect to the relation $\prec$, whence the name for the condition.

Here is the key duality result relating the concepts just introduced. 
\begin{proposition}\label{stabledual}
    The following are equivalent, for all frontal Heyting algebras $\alg{H, K}$ and all $\logic{K4}$-algebras $\alg{M, N}$:
    \begin{enumerate}
        \item A map $h:\alg{H}\to \alg{K}$ is a stable embedding satisfying the BDC$^\to$ for $D^\to$ and the BDC$^\boxtimes$ for $D^\boxtimes$ iff $h^{-1}:\dualspa{K}\to \dualspa{H}$ is a stable surjection satisfying the BDC$^\leq$ for $D^\to_*$ and the BDC$^\sqsubset$ for $D^\boxtimes_*$, where 
        \begin{align*}
            D^\to_*&:=\{\beta (a)\cap -\beta(b):(a, b)\in D^\to\},\\
            D^\boxtimes_*&:=\{-\beta (a): a\in D^\square\}.
        \end{align*}
        Moreover,  the same holds when we substitute $\dualkr{H}$, $\dualkr{K}$ and $\alpha$  for $\dualspa{H}$, $\dualspa{K}$ and $\beta$ respectively, when the former are well defined.
        \item A map $h:\alg{M}\to \alg{N}$ is a stable embedding satisfying the BDC$^\square$ for $D^\square$ iff $h^{-1}:\dualspa{N}\to \dualspa{M}$ is a stable surjection satisfying the BDC$^\square$ for $D^\square_*$ , where \[D^\square_*:=\{-\beta (a): a\in D^\boxtimes\}.\]  
        Moreover,  the same holds when we substitute $\dualkr{M}$, $\dualkr{N}$ and $\alpha$ for $\dualspa{M}$ and $\dualspa{N}$ and $\beta$ respectively, when the former are well defined.
    \end{enumerate}
\end{proposition}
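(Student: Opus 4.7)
The plan is to combine the dualities of \Cref{duality,kripkedual} with an operation-by-operation translation of the bounded domain conditions. By those dualities, a bounded distributive lattice embedding between frontal Heyting algebras (resp.\ a Boolean embedding between $\logic{K4}$-algebras) corresponds to a continuous surjection between the dual modalized Esakia spaces (resp.\ modal spaces). The stable-embedding inequalities $h(\boxtimes a) \leq \boxtimes h(a)$ and $h(\square a) \leq \square h(a)$ translate on the dual side to preservation of the relations $\sqsubset$ and $R$ by the preimage map $h^{-1}$, as is routine from the prime-filter characterization of these relations. Preservation of $\leq$ in the $\sim$-case then comes for free from preservation of $\sqsubset$, as already noted. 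This handles the ``stable embedding iff stable continuous surjection'' backbone of both parts of the statement, and the remaining content is precisely the matching of the BDC clauses.

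For the $\to$ BDC, I would invoke the standard Esakia representation: in a Heyting algebra, $a \to b$ belongs to a prime filter $F$ iff every prime $G \supseteq F$ containing $a$ also contains $b$. Applied to $h(a)\to h(b)$ at a prime filter $x$ of $\alg{K}$, and to $a \to b$ at $h^{-1}(x)$ in $\alg{H}$, together with the general inequality $h(a \to b) \leq h(a)\to h(b)$ that already follows from $h$ being a bounded lattice embedding, the equation $h(a \to b) = h(a)\to h(b)$ rewrites (by contraposition) as: whenever there is $y \in \dualspa{H}$ with $h^{-1}(x) \leq y$, $a \in y$ and $b \notin y$, there must exist $z \in \dualspa{K}$ with $x \leq z$ and $h^{-1}(z) \in \beta(a)\cap -\beta(b)$. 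This is exactly the BDC$^\leq$ condition for the clopen $\beta(a)\cap -\beta(b) \in D^\to_*$, and the converse implication is immediate since $h^{-1}$ already preserves $\leq$.

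For the $\boxtimes$ BDC, I would similarly exploit the dual characterization $\boxtimes a \in F$ iff $a \in G$ for every $G$ with $F \sqsubset G$. By the stable inequality $h(\boxtimes a)\le\boxtimes h(a)$, the non-trivial half of the equation $h(\boxtimes a) = \boxtimes h(a)$ reduces to: any $w \in \dualspa{H}$ with $h^{-1}(x) \sqsubset w$ and $a \notin w$ must be matched by some $y \in \dualspa{K}$ with $x \sqsubset y$ and $h^{-1}(y) \in -\beta(a)$, which is precisely BDC$^\sqsubset$ for the clopen $-\beta(a) \in D^\boxtimes_*$. The $\logic{K4}$-case is verbatim the same, with $\square$ in place of $\boxtimes$ and $R$ in place of $\sqsubset$. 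The Kripke frame versions follow by replaying these arguments with $\alpha$ in place of $\beta$ and completely join prime filters in place of prime filters, relying on \Cref{kripkedual}. The main technical hurdle throughout is the algebra-to-space direction of each equivalence: converting an algebraic (in)equation into the required back-condition demands producing concrete prime (or completely join prime) filters with prescribed membership data, and this is handled by a standard prime-filter extension lemma (Zorn-type in the topological case, a direct construction in the Kripke case).
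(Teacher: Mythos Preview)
Your proposal is correct and follows the standard approach. The paper itself does not give a detailed proof of this proposition; it simply cites \cite[Sec.~3]{BezhanishviliBETVSCR} for the modal case and notes that the remaining cases are straightforward adaptations, so your sketch fills in exactly the details the paper omits, in the expected way.
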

A proof of the modal case for spaces  is given in \cite[Sec. 3]{BezhanishviliBETVSCR}, and can be straightforwardly adapted to cover the remaining cases. 

We will also need slight variants of the concepts just introduced, to be used in defining our notion of pre-filtration. The first two are weakenings of the notions of a stable embedding and of a stable map. 
\begin{definition}[Pre-stable embedding]
    Let $\alg{A, B}$ be either frontal Heyting algebras or $\logic{K4}$-algebras. An injection $h:\alg{A}\to \alg{B}$ is called a \emph{pre-stable embedding} when the following conditions hold:
    \begin{enumerate}
        \item \emph{Frontal Heyting case}: $h$ is a bounded distributive lattice embedding;
        \item \emph{$\logic{K4}$-case}: $h$ is a Boolean embedding and $h(\square^+ a)\leq \square^+ h(a)$ holds for each $a\in A$.
    \end{enumerate}
\end{definition}
Since, as noted already, every bounded distributive lattice embedding partially preserves $\to$ and every Boolean embedding that partially preserves $\square$ also partially preserves $\square^+$, pre-stable embeddings differ from stable embeddings only  in that they do not require the partial preservation of $\boxtimes$ and $\square$, depending on the signature.

We note the following simple fact. 
\begin{proposition}
    Let $\alg{M, N}$ be $\logic{K4}$-algebras, let $h:\alg M\to \alg N$ a pre-stable embedding  and let $D\subseteq N$. If $h$ satisfies the BDC$^\square$ for $D$, then it satisfies the BDC$^{\square^+}$ for $D$.\label{simplefact}
\end{proposition}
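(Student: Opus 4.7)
The plan is to unfold the definition of $\square^+$ and compute directly, using two facts: that a pre-stable embedding in the $\logic{K4}$-case is in particular a Boolean embedding (hence preserves $\land$), and that BDC$^\square$ gives full preservation of $\square$ on elements of $D$.

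Fix $a \in D$. Since $\square^+ a = \square a \land a$ by definition and $h$ preserves $\land$, we have
\[
h(\square^+ a) = h(\square a \land a) = h(\square a) \land h(a).
\]
By the assumed BDC$^\square$ for $D$, $h(\square a) = \square h(a)$, so
\[
h(\square^+ a) = \square h(a) \land h(a) = \square^+ h(a),
\]
which is exactly the BDC$^{\square^+}$ identity for the element $a$.

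Since this is just two rewrites, there is no real obstacle; the only thing to check is that the argument is genuinely domain-local, i.e.\ that it does not secretly require $\square^+$ or $\square$ to be preserved on elements outside $D$. It does not: the definition of $\square^+$ in terms of $\square$ and $\land$ means that preservation at a single element $a$ of $\square$ plus global preservation of $\land$ already yields preservation of $\square^+$ at that same $a$.
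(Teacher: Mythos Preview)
Your proof is correct and matches the paper's own argument, which simply says the result is ``immediate from the fact that $\square^+a:=\square a\land a$ and the fact that pre-stable embeddings are Boolean embeddings.'' You have merely written out explicitly the two-line computation that the paper leaves implicit.
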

\begin{proof}
    Immediate from the fact that $\square^+a:=\square a\land a$ and the fact that pre-stable embeddings are Boolean embeddings. 
\end{proof}

\begin{definition}[Pre-stable map, dual]
    Let $\spa{X, Y}$ be modalized Esakia spaces, $\logic{K4}$-spaces, $\sim$- or $\clm$-Kripke frames. a map $f:\spa{X}\to \spa{Y}$ is called \emph{pre-stable} when it is continuous and preserves $\leq$ (in the $\sim$ case) and $R^+$ (in the $\clm$ case). 
\end{definition}
Again, pre-stable maps differ from stable maps only in that the preservation of $\sqsubset$ and $R$ is not required. 

We need one last definition, which is a slight strengthening of the BDC. 
\begin{definition}[Back and forth condition]
     Let $\spa{X, Y}$ be spaces or Kripke frames, let $\prec$ be any binary relation on $\spa{X}$ and let $\spa{D}$ be a unary or binary domain on $\spa{Y}$. We say that a pre-stable map $f:\spa{X}\to \spa{Y}$ satisfies the \emph{back and forth condition} (BFC$^\prec$) for $\spa{D}$ when the following conditions hold for each $x\in X$ and any $\spa{d}\in \spa{D}$:
     \begin{description}
         \item[\textbf{Back}:] If there is some $y\in \spa{d}$ such that $f(x)\prec y$, then there must be some $z\in X$ such that $x\prec z$ and $f(z)\in \spa{d}$;
         \item[\textbf{Forth}:] if there is $y\in f^{-1}(\spa{d})$ with $x\prec y$, then there must be some $z\in \spa{d}$ with $f(x)\prec z$.
    \end{description}
\end{definition}
In other words, $f$ satisfies the BDC$^\prec$ for $\spa{D}$ when $\upset[\prec]{f(x)}\cap \spa{d}\neq \varnothing$ holds iff $f[\upset[\prec]{x}]\cap\spa{d}\neq\varnothing$, for all $x\in X$ and $\spa{d}\in \spa{D}$. Note that every stable map that satisfies the BDC$^{\prec}$ for $\spa{D}$ automatically satisfies the BFC$^\prec$ for the same domain, since the requirement that a stable map preserve $\prec$ implies the second item in the definition of the BFC$^{\prec}$. However, of course, not every pre-stable maps that satisfies the BFC$^\prec$ for a domain is stable.

A duality result analogous to \Cref{stabledual} can be established using essentially the same argument. 
\begin{proposition}\label{prestabledual}
    The following are equivalent, for all frontal Heyting algebras $\alg{H, K}$ and all $\logic{K4}$-algebras $\alg{M, N}$: \label{bdcdual}
    \begin{enumerate}
        \item A map $h:\alg{H}\to \alg{K}$ is a pre-stable embedding satisfying the BDC$^\to$ for $D^\to$ and the BDC$^\boxtimes$ for $D^\boxtimes$ iff $h^{-1}:\dualspa{K}\to \dualspa{H}$ is a pre-stable surjection satisfying the BFC$^\sqsubset$ for  $D^\boxtimes_*$ and the the BFC$^\leq$ for  $D^\boxtimes_*$, with $D^\to_*$ and $D^\boxtimes_*$ defined as before. 
        Moreover,  the above remains true when we substitute $\dualkr{H}$, $\dualkr{K}$ and $\alpha$  for $\dualspa{H}$, $\dualspa{K}$ and $\beta$ respectively, when the former are well defined;
        \item A map $h:\alg{M}\to \alg{N}$ is a stable embedding satisfying the BDC$^{\square^+}$ for $D^{\square^+}$ and the BDC$^\square$ for $D^\square$ iff $h^{-1}:\dualspa{N}\to \dualspa{M}$ is a stable surjection satisfying the BFC$^{R^+}$ for $D^{\square^+}_*$ and the BFC$^R$ for $D^{\square}_*$, with $D^{\square}_*$ defined as before and $D^{\square^+}_*$ defined in analogous fashion. 
        Moreover,  the same holds when we substitute $\dualkr{M}$, $\dualkr{N}$ and $\alpha$ for $\dualspa{M}$ and $\dualspa{N}$ and $\beta$ respectively, when the former are well defined.
    \end{enumerate}
\end{proposition}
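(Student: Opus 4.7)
The argument proceeds by adapting the proof of Proposition \ref{stabledual} (available in \cite[Sec. 3]{BezhanishviliBETVSCR}), so the plan is to focus on the points where the adaptation is non-trivial rather than re-doing the bookkeeping from scratch. The overall strategy is to unpack the algebraic conditions on $h$ via the Stone map $\beta$ (or $\alpha$ in the Kripke-frame case), translate them into order-theoretic conditions on $h^{-1}$, and vice versa, using the definitions of $\leq, \sqsubset, R, R^+$ on the duals given in Section 2.

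For the first clause, the content of ``pre-stable embedding'' differs from ``stable embedding'' only in dropping the inequality $h(\boxtimes a) \leq \boxtimes h(a)$. Under the duality, this is exactly the drop of $\sqsubset$-preservation for $h^{-1}$. What remains---continuity together with preservation of $\leq$---is precisely the definition of a pre-stable surjection. The analogous move in the $\clm$ case uses the identity $\square^+ a = \square a \land a$ together with \Cref{simplefact} and the observation that $\square^+$ corresponds on the dual to the reflexive closure $R^+$, so that $h(\square^+ a)\leq \square^+ h(a)$ dually amounts to $R^+$-preservation alone.

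The main new ingredient concerns how the BDC translates once the safety net of stability is removed. In Proposition \ref{stabledual}, BDC$^\boxtimes$ on the algebraic side corresponded to BDC$^\sqsubset$ on the dual, and the ``forth'' half of the matchup came for free from the $\sqsubset$-preservation of $h^{-1}$. Without stability, the equation $h(\boxtimes a) = \boxtimes h(a)$ must be split into two inequalities, each with its own dual content: $h(\boxtimes a) \leq \boxtimes h(a)$ corresponds to the back condition, while $\boxtimes h(a) \leq h(\boxtimes a)$ corresponds to the forth condition. Combining the two gives BFC$^\sqsubset$ on $D^\boxtimes_*$. The $\to$-to-$\leq$ translation is handled analogously, replacing $\boxtimes$ by $\to$, $\sqsubset$ by $\leq$, and unary domains by the binary domains $D^\to_*$, using that $U\to V$ dually equals $-\downset[\leq]{(U\smallsetminus V)}$. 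The $\logic{K4}$ case proceeds in the same manner, with $\square^+, R^+$ replacing $\boxtimes, \sqsubset$ where appropriate.

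The main obstacle will be the detailed bookkeeping for the ``forth'' direction: verifying that under mere pre-stability the inequality $\boxtimes h(a) \leq h(\boxtimes a)$ (and its $\to$-analogue) translates to the statement that whenever $x\in \dualspa{K}$ sees via $\sqsubset$ some $y$ with $h^{-1}(y)\in \spa{d}$, the image $h^{-1}(x)$ already sees some $z\in \spa{d}$ directly. This requires a careful application of the prime filter characterization of $\sqsubset$ together with the surjectivity of $h^{-1}$, since we can no longer invoke $\sqsubset$-preservation to lift witnesses; contrapositively, if $h^{-1}(x)$ sees no element of $\spa{d}$, one must build a prime filter $y\in \dualspa K$ witnessing failure of the inequality at a suitable $a$. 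The Kripke-frame versions (with $\alpha$ in place of $\beta$) are routine modifications once one checks that the representation goes through for the completely join prime filters instead of prime filters, appealing to \Cref{kripkedual}.
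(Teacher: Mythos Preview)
Your approach is correct and matches the paper's own proof, which is even terser: it simply says the argument is very similar to that of \Cref{stabledual}, the only new point being the correspondence between BDC$^\boxtimes$ on the algebraic side and BFC$^\sqsubset$ on the dual side (and its $\clm$ analogue), which it declares routine. Your sketch fills in more of that routine than the paper does.

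One minor correction: you have the two inequalities swapped. The inequality $h(\boxtimes a)\leq \boxtimes h(a)$ is the stability-type inequality, and it is precisely this one that corresponds to the \emph{forth} condition for $\spa d=-\beta(a)$ (indeed, global $\sqsubset$-preservation of $h^{-1}$, which is what made forth automatic in the stable case, is dual to $h(\boxtimes a)\leq \boxtimes h(a)$ holding for all $a$). Conversely, $\boxtimes h(a)\leq h(\boxtimes a)$ corresponds to the \emph{back} condition. This does not affect the soundness of your argument, since you establish both directions; only the labeling is reversed.
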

\begin{proof}
    The argument is very similar to that given in the proof of \Cref{stabledual}. The only difference lies in showing that $h$ satisfies the BFC$^\boxtimes$ for $D^\boxtimes$ iff $h_*$ satisfies the BFC for $D^\boxtimes_*$, and the counterpart claim in the $\clm$ case. This is routine. 
\end{proof}

Before we move forward, we note a property of pre-stable maps over $\logic{GL}$-spaces, which will come useful later.
\begin{lemma}
    Let $\spa X, \spa Y$ be $\logic{GL}$-spaces and let $f:\spa X\to \spa Y$ be a pre-stable map satisfying the BFC$^R$ for some $\spa D$. Then $f^{-1}(\max{\spa d})=\max{f^{-1}(\spa d)}$ for each $\spa d\in \spa D$.\label{maxmax}
\end{lemma}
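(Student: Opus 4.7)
The plan is to establish both set-theoretic inclusions using the two halves of the BFC$^R$, leveraging \Cref{maxirr} to rule out the degenerate case where a ``maximal'' point could see itself. The key preliminary observation is that since $f$ is pre-stable (hence continuous) and each $\spa d\in\spa D$ is clopen, $f^{-1}(\spa d)\in\clop{X}$, so \Cref{maxirr} applies to maximal elements on both sides: maximality in a $\logic{GL}$-space is in effect strengthened from $\upset[R]{x}\cap U\subseteq\{x\}$ to $\upset[R]{x}\cap U=\varnothing$, because $\neg Rxx$.

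For the inclusion $f^{-1}(\max{\spa d})\subseteq\max{f^{-1}(\spa d)}$, I would assume $f(x)\in\max{\spa d}$. Clearly $f(x)\in\spa d$, so $x\in f^{-1}(\spa d)$. To verify $\upset[R]{x}\cap f^{-1}(\spa d)\subseteq\{x\}$, take any $y$ in this intersection. The \textbf{forth} clause of the BFC$^R$ yields some $z\in\spa d$ with $Rf(x)z$; but by \Cref{maxirr} applied to $f(x)\in\max{\spa d}$, $\upset[R]{f(x)}\cap\spa d=\varnothing$, a contradiction. Hence the intersection is empty, giving $x\in\max{f^{-1}(\spa d)}$.

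For the converse inclusion $\max{f^{-1}(\spa d)}\subseteq f^{-1}(\max{\spa d})$, I would assume $x\in\max{f^{-1}(\spa d)}$, so that $f(x)\in\spa d$ and, by \Cref{maxirr}, $\neg Rxx$. Suppose $y\in\upset[R]{f(x)}\cap\spa d$. The \textbf{back} clause of the BFC$^R$ then delivers some $z\in X$ with $Rxz$ and $f(z)\in\spa d$, i.e.\ $z\in\upset[R]{x}\cap f^{-1}(\spa d)\subseteq\{x\}$. Thus $z=x$ and $Rxx$, contradicting $\neg Rxx$. Hence $\upset[R]{f(x)}\cap\spa d=\varnothing\subseteq\{f(x)\}$, so $f(x)\in\max{\spa d}$.

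I do not anticipate any real obstacle: the proof is a clean application of the definitions, with each direction using exactly one half of the BFC$^R$ and one invocation of \Cref{maxirr}. The only subtlety worth flagging is noticing that \Cref{maxirr} forces $\upset[R]{x}\cap U=\varnothing$ (not just $\subseteq\{x\}$) for $x\in\max{U}$ in a $\logic{GL}$-space, which is what makes the contradictions in both directions immediate rather than requiring extra case analysis on whether the newly produced point coincides with $x$ or $f(x)$.
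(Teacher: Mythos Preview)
Your proof is correct and follows essentially the same approach as the paper: each inclusion is obtained from one half of the BFC$^R$ together with \Cref{maxirr}. The paper argues by contrapositive (showing $f(x)\notin\max{\spa d}$ implies $x\notin\max{f^{-1}(\spa d)}$ via the back condition, and declaring the other direction analogous via forth), while you give the direct versions, but the content is the same.
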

\begin{proof}
    Assume $f(x)\notin \max{\spa d}$. Suppose $\upset[R]{f(x)}\cap \spa d\neq\varnothing$. Since $f$ satisfies the BFC for $\spa d$, we must have $\upset[R]{x}\cap f^{-1}(\spa d)\neq \varnothing$, which implies $x\notin \max{f^{-1}(\spa d)}$ by \Cref{maxirr}. The other direction is analogous, but using the ``forth'' condition from the BFC. 
\end{proof}

\subsection{Shortcomings of Standard Filtration}

We take an algebraic approach to defining filtration, following \cite{BezhanishviliEtAl2016SCR, BezhanishviliEtAl2016CSL}. 
\begin{definition}
    Let $\alg{A}$ be an algebra, $V$ a valuation on $\alg{M}$, and $\Theta$ a finite, subformula closed set of formulae (in the appropriate signature). A finite model $(\alg{N}, V')$ is called a  \emph{filtration of $(\alg{M}, V)$ through $\Theta$} if the following conditions hold:
    \begin{enumerate}
        \item \emph{$\sim$ case}
        \begin{enumerate}
				\item The bounded distributive lattice reduct of $\alg{B}$ is isomorphic to the bounded distributive sublattice  of $\alg{A}$ generated by  $V[\Theta]$;
				\item $V(p)=V'(p)$ for every propositional variable $p\in \Theta$;
				\item The inclusion $\subseteq:\alg{B}\to \alg{A}$ is a stable embedding satisfying the BDC$^\to$ for the domain $\{(V(\varphi), V(\psi)): \varphi\to \psi\in \Theta\}$ and the BDC$^\boxtimes$ for the domain $\{V(\varphi):\boxtimes \varphi\in \Theta\}$.
			\end{enumerate}
        \item \emph{$\clm$ case}
        \begin{enumerate}
				\item The Boolean algebra reduct of $\alg{B}$ is isomorphic to the Boolean subalgebra of $\alg{A}$ generated by $V[\Theta]$;
				\item $V(p)=V'(p)$ for every propositional variable $p\in \Theta$;
				\item The inclusion $\subseteq:\alg{B}\to \alg{A}$ is a stable embedding satisfying the BDC$^\square$ for the set $\{V(\varphi):\square \varphi\in \Theta\}$.
			\end{enumerate}
        \end{enumerate}
\end{definition}
The definition of filtration in the $\clm$ case is completely standard. The literature on $\sim$-rule systems and logics is far more limited than that on $\clm$-rule systems and logics, to the point that it may not make much sense to speak of a standard definition of filtration in this setting. But the definition just given is as standard as it can be: it is obtained by simply conjoining the standard definition of filtration for models based on modal algebras with the standard definition of models for superintuitionistic logics based on Heyting algebras. 

By \Cref{stabledual}, it follows that a finite model $(\alg{B}, V')$ is a filtration of a model $(\alg{A}, V)$ through some set of formulae only if there is a stable surjection from $\dualspa{A}$ to $\dualspa{B}$. This shows why filtration does not work well for either Magari algebras or $\logic{KM}$-algebras: some $\logic{GL}$-spaces have \emph{no} image under a stable surjection which is also a $\logic{GL}$-space, and the same holds true for $\logic{KM}$-spaces. 

 Considering the case of $\logic{GL}$ first, an example is the space $\spa{X}$ depicted in \Cref{nofiltration}. This space is constructed as follows:
\begin{itemize}
		\item $X=\mathbb{N}\cup \{\omega_0, \omega_1\}$;
		\item $R=\{(n, m)\in \mathbb{N}\times \mathbb{N}: m<n\}\cup \{(\omega_i, x)\in X\times X: i\in \{0, 1\}\text{ and }x\in X\}$;
		\item $\mathcal{O}$ is given by the basis consisting of all $U\subseteq X$ such that either $U$ is a finite subset of $\mathbb{N}$, or $U=V\cup \{\omega_0,\omega_1\}$ with $V$ a cofinite subset of $\mathbb{N}$, or $U$ is one of the following sets:
		\[\{n\in \mathbb{N}:n\text{ is even}\}\cup \{\omega_0\}\quad \quad \{n\in \mathbb{N}:n\text{ is odd}\}\cup \{\omega_1\}.\]
\end{itemize}
\begin{figure}
\centering
   \begin{tikzpicture}
		\node[irr] (0) [label=right:{$0$}] {};
		\node[irr] (1) [below=of 0, label=right:{$1$}] {};
		\node[irr] (2) [below=of 1, label=right:{$2$}] {};
		\node (dots) [below=of 2] {$\vdots$};
		\node[roundnode] (w0) [below left = of dots, label=below:{$\omega_0$}] {};
		\node[roundnode] (w1) [below right = of dots, label=below:{$\omega_1$}] {};
		
		\draw[->] (1) edge (0);
		\draw[->] (2) edge (1); 
		\draw[->] (w0) edge[bend right=40] (w1);
		\draw[->] (w1) edge[bend right=40] (w0);
	\end{tikzpicture}
    \label{nofiltration}
    \caption{The $\logic{GL}$-space $\spa{X}$}
\end{figure}

This space contains a non-trivial cluster made up of two reflexive points, but it is nonetheless  a $\logic{GL}$-space. To see this, one need only observe that any $U\in \clop{X}$ that contains at least one of $\omega_1, \omega_2$ must also contain some element  $n\in\mathbb{N}$. Indeed, any $x\in \max{U}$ must belong to $\mathbb{N}$, which consists entirely of irreflexive points. By \Cref{maxirr}, this implies that $\spa{X}$ is a $\logic{GL}$-space. Now, we know from \Cref{maxirrkripke} that no finite $\logic{GL}$ space can contain reflexive points. Yet \emph{every} image of $\spa{X}$ under a stable map must contain at least one reflexive point, namely the image of $\omega_1$ or of $\omega_2$. Thus, no filtration of any model based on $\spa{X}$ is based on a $\logic{GL}$-space. 

It is worth remarking on why this shows that filtration does not work well for $\logic{GL}$ and its extensions. A paradigm application of the filtration construction is to prove finite model property results. To prove that a rule system $\logic{M}$ has the finite model property using filtration, one takes a model $\logic{M}$ that refutes a rule $\Gamma/\Delta$ and then constructs a filtration of that model through $\mathit{Sfor}(\Gamma/\Delta)$. The filtration always refutes $\Gamma/\Delta$, but the argument only suffices to establish the finite model property if the filtration is also a model of $\logic{M}$. Examples like the one just given therefore show that filtration is not a good tool for proving that $\logic{GL}$ or some extension thereof has the finite model property. 

These remarks generalize to the case of $\logic{KM}$. Consider, for example, the $\logic{KM}$-space obtained by collapsing the lower cluster from the space $\spa{X}$ in \Cref{nofiltration}, letting the modal relation be as before (modulo the collapse of the cluster) and letting the intuitionistic relation be the reflexive closure of the modal relation. The resulting space is depicted in \Cref{nofiltrationkm}. It has a reflexive point under the modal relation, so any surjective image thereof under a relation preserving map must have one as well. But no finite $\logic{KM}$-space can contain reflexive points under the modal relation. 
\begin{figure}
\centering
   \begin{tikzpicture}
		\node[irr] (0) [label=right:{$0$}] {};
		\node[irr] (1) [below=of 0, label=right:{$1$}] {};
		\node[irr] (2) [below=of 1, label=right:{$2$}] {};
		\node (dots) [below=of 2] {$\vdots$};
		\node[roundnode] (w0) [below  = of dots, label=below:{$\omega_0$}] {};
		
		\draw[->] (1) edge (0);
		\draw[->] (2) edge (1); 
	\end{tikzpicture}
    \label{nofiltrationkm}
    \caption{The $\logic{KM}$-space $\spa{Y}$}
\end{figure}

\subsection{Pre-filtration}

We have discussed the shortcomings of standard notions of filtration for normal extensions of $\logic{GL}$ and $\logic{KM}$. We now introduce \emph{pre-filtration} as a more general notion of filtration intended to overcome these shortcomings. 

\subsubsection{The $\sim$ case}

It is instructive to begin with the $\sim$-case. The standard algebraic definition of filtration for Heyting algebras is motivated by a simple uniqueness result: every finite distributive lattice $\alg{L}$ has a \emph{unique} expansion to a Heyting algebra. This is done by defining a Heyting implication on $\alg{L}$ as
\[a\to b=\bigvee \{c\in L: a\land c\leq b\}.\]
It is this uniqueness result that allows one to prove that every model of a superintuitionistic logic based on a Heyting algebra has a filtration through any subformula-closed set of formulae $\Theta$. One starts by taking a model $(\alg{H}, V)$, then one generates a bounded distributive lattice from the $V[\Theta]$. The resulting lattice must be finite, because bounded distributive lattices are locally finite, and it can be expanded to a Heyting algebra $\alg{K}$ using the construction just sketched. The inclusion $\subseteq: \alg{K}\to \alg{H}$ happens to be a bounded distributive lattice embedding that satisfies the BDC$^\to$ for the domain consisting of valuations of formulae $\varphi, \psi$ such that $\varphi\to \psi\in \Theta$. 

This suggests a way forward. The correct way of defining a suitable notion of filtration for frontons is not simply to combine the standard modal and superintuitionistic definitions of filtration. Rather, we first need to look for a suitable uniqueness result  that can be used to generate finite frontons from infinite ones, then model our new notion of filtration after the properties of this generation procedure. 

Luckily, such a uniqueness result exists. 
\begin{theorem}[{\cite[Proposition 5]{Esakia2006TMHCaCMEotIL}}]
    A Heyting algebra $\alg{H}$ can be expanded to a fronton iff the set  
    \[F_a:=\{b\in H: b\to a\leq b\}\]
    is a  principal proper filter for each $a\in H$. Moreover, there is a unique such expansion.
    \label{frontonsquare}
\end{theorem}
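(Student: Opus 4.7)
The strategy is to show that any fronton expansion of $\alg{H}$ must satisfy $\boxtimes a = \min F_a$; this simultaneously yields uniqueness and one direction of the biconditional, and the definition $\boxtimes a := \min F_a$ will serve for the converse. For the forward direction, assume $(\alg{H}, \boxtimes)$ is a fronton. I verify $\boxtimes a \in F_a$ by combining $a \leq \boxtimes a$ with the fronton identity $\boxtimes a \to a \leq a$, which yields $\boxtimes a \to a \leq \boxtimes a$. For minimality, if $b \in F_a$ (so $b \to a \leq b$), the frontal axiom $\boxtimes a \leq b \lor (b \to a)$ forces $\boxtimes a \leq b \lor b = b$. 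Hence $F_a = {\uparrow}\boxtimes a$, a principal filter, proper in any non-trivial algebra: $a \leq \boxtimes a$ handles the case $a \neq 0$, while $a = 0$ combined with $\boxtimes 0 \to 0 \leq 0$ forces $\boxtimes 0 > 0$.

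For the converse, suppose each $F_a$ is a principal proper filter and define $\boxtimes a := \min F_a$. I first note that $\boxtimes$ is monotone, since $a \leq a'$ implies $b \to a \leq b \to a'$ and hence $F_{a'} \subseteq F_a$. The axiom $\boxtimes 1 = 1$ follows from $F_1 = \{1\}$, and $a \leq \boxtimes a$ follows from the observation that every $b \in F_a$ satisfies $a \leq b \to a \leq b$. For $\boxtimes(a \land b) = \boxtimes a \land \boxtimes b$, monotonicity gives $\leq$, while the reverse inequality follows from $\boxtimes a \land \boxtimes b \in F_{a \land b}$, which I check via
\[
(\boxtimes a \land \boxtimes b) \to (a \land b) = \bigl((\boxtimes a \land \boxtimes b) \to a\bigr) \land \bigl((\boxtimes a \land \boxtimes b) \to b\bigr) \leq (\boxtimes a \to a) \land (\boxtimes b \to b) \leq \boxtimes a \land \boxtimes b.
\]
For $\boxtimes a \leq b \lor (b \to a)$, I show $b \lor (b \to a) \in F_a$ using $(b \lor (b \to a)) \to a = (b \to a) \land ((b \to a) \to a) \leq b \to a \leq b \lor (b \to a)$. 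Finally, the fronton identity holds because $\boxtimes a \in F_a$ yields $\boxtimes a \to a \leq \boxtimes a$, whence $\boxtimes a \to a = (\boxtimes a \to a) \land \boxtimes a \leq a$.

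Uniqueness is then immediate, since the forward argument shows that every fronton expansion of $\alg{H}$ must send $a$ to $\min F_a$. The only step involving a non-trivial calculation is the verification that $\boxtimes a \land \boxtimes b$ lies in $F_{a \land b}$; everything else reduces to short Heyting algebra manipulations, so I do not anticipate serious obstacles.
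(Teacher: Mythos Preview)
Your overall strategy matches the paper's exactly: show that any fronton expansion must send $a$ to $\min F_a$, and conversely that $\boxtimes a := \min F_a$ defines a fronton. The paper compresses the converse verification to ``routine,'' so your attempt to spell out the axioms is welcome, but one step is flawed.

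The problem is the middle inequality in your displayed chain for multiplicativity. You write
\[
\bigl((\boxtimes a \land \boxtimes b)\to a\bigr)\land\bigl((\boxtimes a \land \boxtimes b)\to b\bigr)\ \leq\ (\boxtimes a\to a)\land(\boxtimes b\to b),
\]
and the only plausible reading is a term-by-term comparison $(\boxtimes a\land\boxtimes b)\to a\leq \boxtimes a\to a$. But Heyting implication is \emph{antitone} in its first argument, so from $\boxtimes a\land\boxtimes b\leq\boxtimes a$ you only get the reverse inequality. A concrete counterexample to the term-by-term claim: take the Heyting algebra of upsets of the ``N'' poset (two minimal points $x,y$, two maximal points $z,w$, with $x<z$, $y<z$, $y<w$), and let $a=\{z\}$, $b=\{w\}$; then $(\boxtimes a\land\boxtimes b)\to a=\{x,z\}\not\subseteq\{z\}=\boxtimes a\to a$. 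Worse, since you have already shown $\boxtimes c\to c=c$, the right-hand side of your displayed inequality equals $a\land b$, so the inequality you are asserting is \emph{equivalent} to the very membership $\boxtimes a\land\boxtimes b\in F_{a\land b}$ you are trying to establish; the chain is circular.

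A correct route is to prove the frontal axiom $\boxtimes a\leq d\lor(d\to a)$ first (your argument for this is fine) and then use it with $d:=\boxtimes(a\land b)$: combining $\boxtimes a\leq \boxtimes(a\land b)\lor(\boxtimes(a\land b)\to a)$ with the analogous bound for $\boxtimes b$ and the identity $\boxtimes(a\land b)\to(a\land b)=a\land b$ gives
\[
\boxtimes a\land\boxtimes b\ \leq\ \boxtimes(a\land b)\lor\bigl(\boxtimes(a\land b)\to(a\land b)\bigr)=\boxtimes(a\land b)\lor(a\land b)=\boxtimes(a\land b).
\]
Reordering your verifications this way closes the gap; everything else in your proposal is correct and in line with the paper.
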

\begin{proof}
    $(\Rightarrow)$ Assume $\alg{H}$ has the structure of a fronton. It is routine to check that $F_a$ is a proper filter; we show that it is principal. Now, for each $a\in H$ we have $\boxtimes a\to a\leq a\leq \boxtimes a$, whence $\boxtimes a\in F_a$. Given $b\in F_a$, we have $b\lor (b\to a)\leq b$ and $\boxtimes a\leq b\lor (b\to a)$, whence $\boxtimes a\leq b$. 

   $(\Leftarrow)$ Assume that each $F_a$ is a principal proper filter. Define
   \[\boxtimes a:=\bigwedge F_a.\]
   Thus $\boxtimes a$ coincides with the generator of $F_a$. It is routine to check that the result of expanding $\alg{H}$ with $\boxtimes$ is a frontal Heyting algebra. Moreover, it is also a fronton. For by $\boxtimes a\in F_a$ we have $\boxtimes a\to a\leq \boxtimes a$. This is to say $(\boxtimes a\to a)\land \boxtimes a=\boxtimes a\to a$. Since, clearly, $(\boxtimes a\to a)\land \boxtimes a\leq a$, it follows that $\boxtimes a\to a\leq a$, as desired. 

   From the arguments just given it follows that every fronton must satisfy the identity $\boxtimes a=\bigwedge F_a$. This establishes the uniqueness condition in the theorem. 
\end{proof}
\begin{corollary}
    Every finite Heyting algebra has a unique expansion to a fronton. \label{uniquefinitefronton}
\end{corollary}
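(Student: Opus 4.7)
The plan is to apply \Cref{frontonsquare} directly: to show that a finite Heyting algebra $\alg{H}$ has a unique fronton expansion, it suffices to verify that $F_a$ is a principal proper filter for every $a\in H$, since \Cref{frontonsquare} already delivers both existence and uniqueness of the $\boxtimes$-structure from that hypothesis. The trivial algebra is a fronton trivially, so I may restrict attention to non-trivial $\alg H$.

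The key technical observation I would isolate first is an equivalent description of membership in $F_a$, valid in any Heyting algebra: $b\to a\leq b$ if and only if $a\leq b$ and $b\to a=a$. For the forward direction, from $b\to a\leq b$ and the evaluation inequality $(b\to a)\wedge b\leq a$ one obtains $b\to a\leq a$, which combined with the universally valid $a\leq b\to a$ yields $b\to a=a$; together with the given $b\to a\leq b$ this also gives $a\leq b$. The converse is immediate. With this reformulation in hand, $F_a=\{b\in H : a\leq b\text{ and }b\to a = a\}$.

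Using this characterization, I would verify the filter properties by short computations. Clearly $1\in F_a$, giving non-emptiness. For upward closure, if $b\in F_a$ and $b\leq c$, then antitonicity of $x\mapsto x\to a$ in the first argument gives $c\to a\leq b\to a=a$, and combined with $a\leq c\to a$ yields $c\to a=a$, while $a\leq b\leq c$; so $c\in F_a$. For closure under binary meets, if $b,c\in F_a$ then currying gives $(b\wedge c)\to a=b\to(c\to a)=b\to a=a$, and $a\leq b\wedge c$ is immediate; so $b\wedge c\in F_a$. Finally, $F_a$ is proper: if $0\in F_a$ then $a\leq 0$ forces $a=0$, and then $0\to 0 = 1\leq 0$ forces triviality of $\alg H$, contrary to assumption.

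The last step is straightforward finiteness: since $\alg H$ is finite, the non-empty filter $F_a$ contains its own meet $\bigwedge F_a$, which must therefore be its least element and hence its generator. So $F_a$ is a principal proper filter. \Cref{frontonsquare} now applies to give existence and uniqueness of the fronton expansion, completing the proof. I do not anticipate a real obstacle here; the only subtlety is the equivalent characterization of $F_a$ above, which ensures that the filter closure computations are completely routine rather than requiring any a priori fronton structure on $\alg H$.
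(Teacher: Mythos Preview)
Your proposal is correct and follows the same approach as the paper: verify that each $F_a$ is a principal proper filter and invoke \Cref{frontonsquare}. The paper's proof is a single sentence that takes the filter axioms for $F_a$ as routine and only notes that finiteness yields principality; you spell out those routine verifications explicitly (via the useful reformulation $F_a=\{b:a\le b\text{ and }b\to a=a\}$) and also treat the trivial-algebra edge case, but the underlying argument is identical.
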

\begin{proof}
   In a finite Heyting algebra $\alg{H}$, each filter $F_a$ must contain the meet $\bigwedge F_a$. Thus we can expand $\alg{H}$ to a fronton using the construction described in the proof of the $(\Leftarrow)$ direction of the previous theorem. By the same theorem, this expansion is unique. 
\end{proof}

There is thus a unique way of expanding a finite distributive lattice to a fronton. Every finite distributive lattice has a unique expansion to a Heyting algebra, which in turn has a unique expansion to a fronton. Composing these two construction yields the mapping we use to model our new notion of pre-filtration. 
\begin{definition}[Pre-filtration, $\sim$ case]
	Let $\mathfrak{H}$ be a frontal Heyting algebra, $V$ a valuation on $\mathfrak{H}$, and $\Theta$ a finite, subformula closed set of formulae. A finite model $(\mathfrak{K}, V')$, with $\mathfrak{K}\in \mathsf{fHA}$, is called a  \emph{pre-filtration of $(\mathfrak{H}, V)$ through $\Theta$} if the following hold: \label{def-prefiltration}
	\begin{enumerate}
		\item The $(\boxtimes, \to)$-free reduct of $\alg{K}$ is isomorphic to the bounded sublattice of $\mathfrak{H}$ generated by a finite superset of $ V[\Theta]$;\footnote{Note we now require only that the reduct be generated by a finite superset of $\bar V[\Theta]$ rather than by $\bar V[\Theta]$ itself. This slight generalization is intended to make the statements and proofs of some of the theorems to follow simpler; nothing substantial hinges on this. }
		\item $V(p)=V'(p)$ for every propositional variable $p\in \Theta$;
		\item The inclusion $\subseteq:\mathfrak{K}\to \mathfrak{H}$ is a pre-stable embedding satisfying the BDC$^\to$ for the set $\{( V(\varphi),  V(\psi)):\varphi\to \psi\in \Theta\}$, and satisfying the BDC$^\boxtimes$ for the set $\{ V(\varphi):\boxtimes \varphi\in \Theta\}$.
	\end{enumerate}
\end{definition}
The only difference between the definitions of filtration and pre-filtration is that we have dropped the requirement that $\boxtimes$ be partially preserved. This is key, as the construction we use to extract finite frontons, based on the proof of \Cref{uniquefinitefronton}, does not partially preserve $\boxtimes$. This illustrates how we can use pre-filtration to get around problem cases like the space $\spa{Y}$ in \Cref{nofiltrationkm}. A pre-stable map need not preserve the modal relation $\sqsubset$, so finite $\logic{KM}$-spaces can be surjective images of $\spa{Y}$ under pre-stable maps despite the latter containing  the $\sqsubset$-reflexive point $\omega_0$.

The uniqueness result from \Cref{frontonsquare} can be used to establish the existence of ``enough'' pre-filtrations, in the following sense. 
\begin{theorem}
    Let $\alg{H}$ be a fronton. For every model $(\alg{H}, V)$ and any subformula-closed set of formulae $\Theta$, there is a pre-filtration $(\alg{K}, V')$ of $(\alg{H}, V)$ through $\Theta$ such that $\alg{K}$ is a fronton. \label{admitsprefiltrationsim} 
\end{theorem}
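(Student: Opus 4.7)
The plan is to build $(\alg{K},V')$ by letting $\alg{K}$ be a carefully chosen finite bounded distributive sublattice $L$ of $\alg{H}$, equipped with its unique Heyting implication together with the unique frontal operator supplied by \Cref{uniquefinitefronton}, and by setting $V'(p):=V(p)$ for every propositional variable $p\in\Theta$.

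Concretely, I would take $S$ to be a finite superset of $V[\Theta]$ obtained by starting from $V[\Theta]$ and iteratively adding, for each $\boxtimes\varphi\in\Theta$ and each $b$ in the sublattice generated so far, the Heyting implication $b\to V(\varphi)$ computed inside $\alg{H}$, until the construction stabilises. Let $L:=\langle S\rangle_{\alg{H}}$; since bounded distributive lattices are locally finite and $S$ is finite, $L$ is finite. As a finite distributive lattice, $L$ has a unique Heyting implication $\to^{\alg{K}}$, and by \Cref{uniquefinitefronton} it further admits a unique frontal operator $\boxtimes^{\alg{K}}$, making $\alg{K}:=(L,\to^{\alg{K}},\boxtimes^{\alg{K}})\in\op{Frt}$. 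The first two clauses of \Cref{def-prefiltration} are then immediate. For BDC$^\to$ on $\{(V(\varphi),V(\psi)):\varphi\to\psi\in\Theta\}$, subformula-closure gives $V(\varphi)\to V(\psi)=V(\varphi\to\psi)\in V[\Theta]\subseteq L$; since Heyting implication in the finite lattice $L$ equals the maximum $c\in L$ with $V(\varphi)\wedge c\leq V(\psi)$, and the corresponding maximum in $\alg{H}$ already lies in $L$, the two coincide.

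For BDC$^\boxtimes$ on $\{V(\varphi):\boxtimes\varphi\in\Theta\}$, fix $a:=V(\varphi)$ and $\eta:=\boxtimes^{\alg{H}} a=V(\boxtimes\varphi)\in L$. The chain $\eta\to^{\alg{K}} a\leq\eta\to^{\alg{H}} a\leq\eta$ places $\eta$ in $F^{\alg{K}}_a$, whence $\boxtimes^{\alg{K}} a\leq\eta$. Conversely, for any $b\in L$ with $b\to^{\alg{K}} a\leq b$, the closure of $L$ under the map $c\mapsto c\to^{\alg{H}} a$ built into $S$ forces $b\to^{\alg{K}} a=b\to^{\alg{H}} a$, so $b\to^{\alg{H}} a\leq b$, placing $b$ in $F^{\alg{H}}_a$; by \Cref{frontonsquare} the latter is the principal filter generated by $\eta$, so $b\geq\eta$ and $\boxtimes^{\alg{K}} a=\eta$ follows.

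The main obstacle I foresee is justifying that the iterative closure defining $S$ terminates in finitely many steps: each new lattice closure can in principle produce elements whose images under $c\mapsto c\to^{\alg{H}} V(\varphi)$ are genuinely fresh, with no a priori bound in an arbitrary infinite $\alg{H}$. A clean way around this difficulty is to work dually. By the frontal duality, $\dualspa{H}$ is a $\logic{KM}$-space. One can construct a finite pre-stable quotient of $\dualspa{H}$ by refining the canonical filtration equivalence (collapsing points that agree on $\beta(V[\Theta])$) and equipping the quotient with a modal relation $\sqsubset'$ that respects the BFC$^\sqsubset$ for the dual domain $\{-\beta(V(\varphi)):\boxtimes\varphi\in\Theta\}$ while remaining irreflexive at every point of $\max{U}$ for all clopen downsets $U$; by \Cref{maxirr} the quotient is then a $\logic{KM}$-frame. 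Dualising the resulting pre-stable surjection via \Cref{prestabledual} yields a finite fronton $\alg{K}$ together with the required pre-stable embedding $\alg{K}\hookrightarrow\alg{H}$, bypassing the need for any algebraic closure argument.
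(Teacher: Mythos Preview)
Your algebraic approach is headed in the right direction, and you correctly diagnose the obstacle: closing under all maps $c\mapsto c\to^{\alg H}V(\varphi)$ and then regenerating the lattice need not stabilise in an arbitrary fronton. But the dual fallback you propose does not remove this difficulty---it only rephrases it. You assert that one can equip the finite quotient with a modal relation $\sqsubset'$ that simultaneously satisfies the BFC$^{\sqsubset}$ for the required domain and is irreflexive at every maximal point of every clopen downset, yet you never \emph{define} $\sqsubset'$. Producing such a relation is precisely the content of the theorem; the paper's earlier discussion of the space in \Cref{nofiltrationkm} shows that the relations inherited from standard filtration fail to be irreflexive. Without a concrete construction of $\sqsubset'$, the dual paragraph is a wish list, not a proof.

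The structural fact you are missing is that in any fronton the interval $[a,\boxtimes a]$, viewed as a sublattice of $\alg H$, is a Boolean algebra with complement $\neg_a b=(b\to a)\land\boxtimes a$. The paper exploits this to replace your unbounded closure by a bounded one: enumerate $D^\boxtimes=\{a_1,\dots,a_k\}$, let $\alg K_0$ be the bounded sublattice generated by $V[\Theta]$, and at step $i{+}1$ adjoin to $\alg K_i$ only the finitely many complements $(b\to a_{i+1})\land\boxtimes a_{i+1}$ for $b\in K_i\cap[a_{i+1},\boxtimes a_{i+1}]$, then regenerate. This terminates after exactly $k$ steps. The BDC$^{\boxtimes}$ is then obtained not via your filter comparison but by noting that both $\boxtimes a_i$ and the internally computed $\boxtimes^{\alg K}a_i$ lie in $[a_i,\boxtimes a_i]$ and have the same Boolean complement $a_i$ there, so they coincide by uniqueness of complements. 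That interval-local Boolean structure is the missing idea that makes the construction finite.
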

\begin{proof}
    Let $\alg{K}_0$ be the bounded sublattice of $\alg{H}$ generated by $V[\Theta]$ and let $D^\boxtimes:=\{V(\varphi):\boxtimes \varphi\in \Theta\}$. Let $a_1, \ldots, a_k$ enumerate $D^\boxtimes$ and define recursively:
    \begin{align*}
        C_{i+1}:=\{(b\to a_{i+1})\land \boxtimes a_{i+1}:b\in K_i \cap [a_{i+1}, \boxtimes a_{i+1}]\},
    \end{align*}
    where $[a, \boxtimes a]:=\{b\in H:a\leq b\leq \boxtimes a\}$. Finally, let $\alg{K}_{i+1}$ be the bounded sublattice of $\alg{H}$ generated by $K_i\cup C_i$.

    To get an intuitive grip on this construction, recall \citep[Lemma 4]{Castiglioni2012TFMPftVoHAwS} that in any fronton $\alg{H}$ the interval $[a, \boxtimes a]$, viewed as a sublattice of $\alg{H}$, is a Boolean algebra where the complement $\neg_a b$ of any $b\in [a, \boxtimes a]$ is given by 
    \[\neg_a b:=(b\to a)\land \boxtimes a.\]
    Thus, the move from $\alg{K}_i$ to $\alg{K}_{i+1}$ consists in adding complements to elements of $\alg K_i$ relative to the interval $[a_{i+1}, \boxtimes a_{i+1}]$, then generating a new distributive lattice. 

    Since each $\alg{K}_i$ is finite, it can be viewed as a fronton by defining on it a Heyting implication $\rightsquigarrow_i$ and a modal operator $\otimes_i$ as in the proof of \Cref{frontonsquare}. Let $\alg{K}:=\alg{K}_k$ and $\otimes:=\otimes_k$. By construction, the inclusion embedding $\subseteq:\alg K\to \alg H$ is pre-stable. We show that it satisfies the BDC for $(D^\to, D^\boxtimes)$, where $D^\to:=\{(V(\varphi), V(\psi)): \varphi\to \psi\in\Theta\}$.

    Since
    \[a\rightsquigarrow b:=\bigvee \{c\in K:a\land c\leq b\},\qquad a\to b:=\bigvee \{c\in H:a\land c\leq b\},\]
    if $a\to b\in V[\Theta]\subseteq K$, then $a\to b\leq a\rightsquigarrow b$, whence $a\rightsquigarrow b=a\to b$ given pre-stability. Now let $a_i\in D^\boxtimes$. 
    Observe that $\otimes_i(a_i)=\boxtimes(a_i)$. For note that
    \[\otimes_i a_i:=\bigwedge \{b\in K:b\rightsquigarrow_i a_i\leq a_i\},\qquad {\boxtimes a_i}:=\bigwedge \{b\in H:b\to a_i \leq a_i\}.\]
     Since $\boxtimes a_i\in D^\boxtimes \subseteq K$ and ${\boxtimes a_i\rightsquigarrow_i a_i}\leq \boxtimes a_i\to a_i$, it follows that $\otimes_i a_i\leq \boxtimes a_i$. Consequently, $\otimes_i a_i\in K_i\cap [a_i, \boxtimes a_i]$. 
   Now, $\otimes_i a_i\rightsquigarrow_i a_i=a_i$ holds because $\alg K_i$ is a fronton. Moreover, given that $\boxtimes a_i, a_i, \boxtimes a_i\to a_i\in V[\Theta]$, we also have ${\boxtimes a_i\rightsquigarrow a_i}=\boxtimes a_i\to a_i=a_i$. So:
    \begin{align*}
        \neg_{a_i} {\otimes_i a_i}&=(\otimes_i a_i\rightsquigarrow_i a_i)\land \boxtimes a_i=a_i\land \boxtimes a_i=a_i,\\
        \neg_{a_i} {\boxtimes a_i}&=(\boxtimes a_i\rightsquigarrow_i a_i)\land \boxtimes a_i=a_i\land \boxtimes a_i=a_i.
    \end{align*}
    Because Boolean complements are unique, it follows that $\otimes_i a_i=\boxtimes a_i$. By analogous reasoning we have $\otimes_i a_i= \otimes_{i+1} a_i$. Putting these observations together, we conclude that $\otimes a= \boxtimes a$ holds for every $a\in D^\boxtimes$, as desired. 

    Finally, define a valuation $V'$ on $\alg{K}$ by putting  $V'(p)=V(p)$ if $p\in \Theta$, arbitrary otherwise. The resulting model $(\alg{K}, V')$ is a pre-filtration of $(\alg H, V)$ through $\Theta$.
\end{proof}
The argument just presented is a close adaptation of the proof of \cite[Thm. 6]{Castiglioni2012TFMPftVoHAwS}, which is related to a construction used by Muravitsky  \cite[Thm. 2]{Muravitskii1981FAotICatEoaEHNM} to establish the finite model property for $\logic{KM}$.

Despite the weaker definition, pre-filtrations still preserve and reflect the satisfaction of rules whose premises and conclusions belong to the ``filtering set'' $\Theta$.
\begin{theorem}[Pre-filtration theorem for frontal Heyting algebras]\label{prefiltrationtheorem}
    Let $\mathfrak{H}$ be a fronton, $V$ a valuation on $\mathfrak{H}$, and $\Theta$ a finite, subformula-closed set of formulae. If $(\mathfrak{K}, V')$ is a pre-filtration of $(\mathfrak{H}, V)$ through $\Theta$ then for every $\varphi\in \Theta$ we have
	\[ V(\varphi)= V'(\varphi).\]
	Consequently, for every rule $\Gamma/\Delta$ such that $\gamma, \delta\in \Theta$ for each $\gamma\in \Gamma$ and $\delta\in \Delta$ we have 
	\[\mathfrak{H}, V\models \Gamma/\Delta\iff \mathfrak{K}', V'\models \Gamma/\Delta.\]
\end{theorem}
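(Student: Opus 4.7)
The plan is to prove the pointwise identity $V(\varphi) = V'(\varphi)$ for every $\varphi\in \Theta$ by induction on the complexity of $\varphi$, and then derive the rule-satisfaction equivalence as an immediate corollary. Throughout, I identify $\mathfrak{K}$ with its image under the inclusion $\subseteq:\mathfrak{K}\to \mathfrak{H}$; since this map is a pre-stable embedding, the lattice operations of $\mathfrak{K}$ coincide with those of $\mathfrak{H}$ on elements of $K$, and in particular $1_{\mathfrak{K}} = 1_{\mathfrak{H}}$ and $0_{\mathfrak{K}}=0_{\mathfrak{H}}$.

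The base cases are immediate: for a propositional variable $p\in \Theta$, clause~(2) of \Cref{def-prefiltration} gives $V(p)=V'(p)$, and for the constants $\top, \bot$ the identity holds because the embedding is bounded. The inductive steps for $\land$ and $\lor$ are equally routine---subformula closure delivers the induction hypothesis on immediate subformulae, and the pre-stable embedding preserves $\land$ and $\lor$ at all arguments in $K$ because it is a bounded distributive lattice embedding.

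The two non-routine inductive steps, $\to$ and $\boxtimes$, are handled by exactly the two BDCs built into \Cref{def-prefiltration}. For $\varphi=\psi\to\chi\in\Theta$, subformula closure gives $\psi,\chi\in\Theta$, so the IH yields $V(\psi)=V'(\psi)$ and $V(\chi)=V'(\chi)$; since $(V(\psi),V(\chi))$ lies in the domain of the BDC$^\to$, I obtain $V'(\psi)\to_{\mathfrak{K}}V'(\chi) = V(\psi)\to_{\mathfrak{H}}V(\chi)$, whence $V'(\varphi)=V(\varphi)$. The $\boxtimes$ case is analogous: for $\varphi=\boxtimes\psi\in\Theta$, the IH gives $V(\psi)=V'(\psi)$, and the BDC$^\boxtimes$ applied at $V(\psi)$ yields $\boxtimes_{\mathfrak{K}}V'(\psi)=\boxtimes_{\mathfrak{H}}V(\psi)$. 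Finally, the rule equivalence follows because, for each $\theta\in\Theta$, the identity $V(\theta)=V'(\theta)$ together with $1_{\mathfrak{K}}=1_{\mathfrak{H}}$ gives $V(\theta)=1_{\mathfrak{H}}$ iff $V'(\theta)=1_{\mathfrak{K}}$, and the definition of rule satisfaction then transfers both ways.

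I do not anticipate a serious obstacle: this is a textbook filtration-style induction, and \Cref{def-prefiltration} was tailored precisely to make each inductive step go through. The only conceptual point worth underlining is that the weakening from stable to pre-stable embeddings is harmless here, since the BDC$^\to$ and BDC$^\boxtimes$ on the stipulated domains supply exactly the partial preservation of $\to$ and $\boxtimes$ needed at those two inductive steps, while the $\land$ and $\lor$ cases require only the lattice-embedding property that pre-stability still provides.
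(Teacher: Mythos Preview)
Your proposal is correct and is exactly the approach the paper has in mind: the paper's own proof consists solely of the sentence ``A straightforward induction on the structure of formulae,'' and you have spelled out precisely that induction, using the lattice-embedding property for $\land,\lor$ and the two BDCs for the $\to$ and $\boxtimes$ steps.
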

\begin{proof}
    A straightforward induction on the structure of formulae. 
\end{proof}

We conclude with some remarks about the existence of pre-filtrations of frontal Heyting algebras more generally. Given $\alg H, V$  and $\Theta$ as in \Cref{def-prefiltration}, the expansion construction from \Cref{frontonsquare} can always be used to extract a finite fronton generated as a bounded sublattice of $\alg{H}$ by $V[\Theta]$. But of course, in general, the construction cannot be continued to construct a pre-filtration of $(\alg{H}, V)$ based on $\alg{K}$. For if $\Theta$ contains $(\boxtimes p\to p)\to p$ and $\alg{H}$ is not a fronton, $(\alg{H}, V)$  and $(\alg{K}, V')$ will disagree on whether $(\boxtimes p\to p)\to p$ is valid, so $(\alg{K}, V')$ cannot be a pre-filtration of $(\alg{H}, V)$ through $\Theta$. 

It thus remains an open question whether the version of \Cref{admitsprefiltrationsim} obtained by substituting ``frontal Heyting algebra'' for ``fronton'' is true. This is an instance of a broader problem: it is, in general, not clear how to construct filtrations in signatures containing multiple non-interdefinable operators.

\subsubsection{For Modal Algebras}

Let us now turn to the $\clm$ case. The key intuition we appeal to here, which we mentioned already, is that every $\logic{K4}$-algebra can be seen as a bimodal algebra in a signature with an extra modal operator $\square^+$, defined as
\[\square^+a:=a\land \square a.\]
We have defined pre-filtration for frontal Heyting algebras by means of pre-stable embeddings, which  partially preserve $\to$ but not necessarily $\boxtimes$. Likewise, to define pre-filtration for $\logic{K4}$-algebra, we use embeddings that partially preserve $\square^+$ but not necessarily $\square$ itself, namely pre-stable embeddings again.

\begin{definition}[Pre-filtration for $\logic{K4}$-algebras]
	Let $\mathfrak{M}$ be a $\logic{K4}$-algebra, $V$ a valuation on $\mathfrak{M}$, and $\Theta$ a finite, subformula closed set of formulae. A finite model $(\mathfrak{N}, V')$, with $\mathfrak{N}$ a $\logic{K4}$-algebra, is called a  \emph{pre-filtration of $(\mathfrak{M}, V)$ through $\Theta$} if the following hold:
	\begin{enumerate}
		\item The $\square$-free reduct of $\alg{N}$ is the Boolean subalgebra of $\alg{M}$ generated by a finite superset of $V[\Theta]$;
		\item $V(p)=V'(p)$ for every propositional variable $p\in \Theta$;
		\item The inclusion $\subseteq:\mathfrak{N}\to \mathfrak{M}$ is a pre-stable embedding satisfying the BDC$^{\square}$ for the set $\{ V(\varphi):\square \varphi\in \Theta\}$.\label{prefiltr-3}
	\end{enumerate}
\end{definition}
One might have thought \Cref{prefiltr-3} should be strengthened so that $\subseteq$ is also required to satisfy the BDC$^{\square^+}$ for the set $D^{\square^+}:=\{ V(\varphi):\square^+ \varphi\in \Theta\}$. But this is not really a strengthening. By the definition of $\square^+$, this set is contained in $D^\square:=\{ V(\varphi):\square \varphi\in \Theta\}$. Moreover, by \Cref{simplefact}, it already follows from \Cref{prefiltr-3} that $\subseteq$ satisfies  BDC$^{\square^+}$ for $D^{\square}$, hence for $D^{\square^+}$ as well.

As before, the weaker definition does not affect the ability of pre-filtrations to preserve and reflect the satisfaction of formulae in the filter set $\Theta$.
\begin{theorem}[Pre-filtration theorem for $\logic{K4}$-algebras]
    Let $\mathfrak{M}$ be a $\logic{K4}$-algebra, $V$ a valuation on $\mathfrak{H}$, and $\Theta$ a a finite, subformula-closed set of formulae. If $(\mathfrak{N}, V')$ is a pre-filtration of $(\mathfrak{M}, V)$ through $\Theta$ then for every $\varphi\in \Theta$ we have
	\[ V(\varphi)= V'(\varphi).\]
	Consequently, for every rule $\Gamma/\Delta$ such that $\gamma, \delta\in \Theta$ for each $\gamma\in \Gamma$ and $\delta\in \Delta$ we have 
	\[\mathfrak{M}, V\models \Gamma/\Delta\iff \mathfrak{N}, V'\models \Gamma/\Delta.\]
\end{theorem}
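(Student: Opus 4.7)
The plan is to proceed by a straightforward induction on the structure of formulae $\varphi \in \Theta$, mirroring exactly the strategy used for the pre-filtration theorem in the $\sim$ case. The subformula closure of $\Theta$ is crucial: it guarantees that whenever we reduce a formula to its immediate subformulae in the inductive step, those subformulae remain in $\Theta$ and so the induction hypothesis applies.

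For the base cases, propositional variables $p \in \Theta$ are handled directly by clause (2) of the pre-filtration definition, which stipulates $V(p) = V'(p)$. The constants $\bot$ and $\top$ are preserved because the inclusion $\subseteq:\alg{N}\to\alg{M}$ is a Boolean embedding (this is part of being a pre-stable embedding in the $\clm$ case). For the inductive steps on the Boolean connectives $\land$ and $\neg$, the induction hypothesis together with the fact that $\subseteq$ is a Boolean embedding yields the claim immediately, since Boolean operations are fully preserved.

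The only non-routine case is $\square\varphi$, and the key observation here is that the relevant element lives in both algebras. Since $\square\varphi \in \Theta$, subformula closure gives $\varphi \in \Theta$, so the induction hypothesis delivers $V(\varphi) = V'(\varphi)$. Moreover, $V(\varphi)\in V[\Theta]$, and by clause (1) of the pre-filtration definition $V[\Theta]\subseteq N$, so $V(\varphi)$ is a bona fide element of $\alg{N}$. Now clause (3), asserting that $\subseteq$ satisfies the BDC$^\square$ for the domain $\{V(\psi):\square\psi\in\Theta\}$, applies to $V(\varphi)$ and yields the identity $\square_{\alg{N}} V(\varphi) = \square_{\alg{M}} V(\varphi)$ as elements of $M$. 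Chaining these equalities, we obtain
\[V'(\square\varphi) = \square_{\alg{N}} V'(\varphi) = \square_{\alg{N}} V(\varphi) = \square_{\alg{M}} V(\varphi) = V(\square\varphi),\]
as desired.

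The ``consequently'' clause then follows at once: if every $\gamma \in \Gamma$ and every $\delta \in \Delta$ lies in $\Theta$, then $V(\gamma) = V'(\gamma)$ and $V(\delta) = V'(\delta)$ by the first part, and in particular $V(\gamma)=1$ iff $V'(\gamma)=1$, and similarly for $\delta$, so the two models agree on the satisfaction of $\Gamma/\Delta$. I do not expect any serious obstacle; the only point requiring care is the bookkeeping that $V(\varphi)$ inhabits $N$ so that the BDC$^\square$ can be invoked, and that BDC$^{\square^+}$ (automatically available by \Cref{simplefact}) is not needed for the induction because $\square^+$ is not a primitive connective of $\mathit{Frm}_\clm$.
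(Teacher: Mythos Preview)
Your proposal is correct and matches the paper's approach: the paper omits an explicit proof for this theorem, relying on the analogous result for frontal Heyting algebras whose proof is stated simply as ``a straightforward induction on the structure of formulae.'' Your write-up is a faithful and careful execution of exactly that induction, with the only nontrivial step (the $\square$ case via the BDC$^\square$) handled correctly.
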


The existence of a pre-filtration of a model based on a $\logic{K4}$-algebra through any subformula-closed set of formulae is an immediate consequence of the existence of a filtration of that model through that set of formulae, since every filtration is a pre-filtration.
\begin{theorem}
     Let $\alg{M}$ be a $\logic{K4}$-algebra. For every model $(\alg{M}, V)$ and any subformula-closed set of formulae $\Theta$, there is a pre-filtration $(\alg{N}, V')$ of $(\alg{M}, V)$. \label{admitsfiltrationclm}
\end{theorem}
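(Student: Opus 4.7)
The plan is to construct a genuine filtration of $(\alg{M},V)$ through $\Theta$ and then observe that every filtration of a K4-model is, a fortiori, a pre-filtration.

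First I would invoke the classical algebraic filtration theorem for K4-algebras. Take $\alg{N}_0$ to be the Boolean subalgebra of $\alg{M}$ generated by $V[\Theta]$; it is finite because Boolean algebras are locally finite. One then equips $\alg{N}_0$ with a modal operator $\blacksquare$ (for instance by setting $\blacksquare a := \bigvee\{\square^+ b : b\in N_0,\ \square^+ b\leq a\}$ for the least filtration, or by a dual construction for the greatest filtration) so that the resulting algebra $\alg{N}:=(\alg{N}_0,\blacksquare)$ is a K4-algebra and the inclusion $\subseteq:\alg{N}\to\alg{M}$ is a stable embedding satisfying BDC$^\square$ for $D^\square:=\{V(\varphi):\square\varphi\in\Theta\}$. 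The verification that $\alg{N}$ is transitive and that the inclusion is stable is a routine but standard calculation; see the sources cited for \Cref{algebraization}. Finally, define $V'$ to agree with $V$ on the propositional variables in $\Theta$ and arbitrarily elsewhere.

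It remains to check that $(\alg{N},V')$ is a pre-filtration. The Boolean reduct of $\alg{N}$ is generated by $V[\Theta]$, which trivially is a finite superset of itself, so the first condition in the definition of pre-filtration holds. The second condition holds by the choice of $V'$. For the third, every stable embedding is by definition a pre-stable embedding, and the BDC$^\square$ requirement is literally identical in the two definitions; the associated BDC$^{\square^+}$ for $D^\square$ follows from \Cref{simplefact}. Hence $(\alg{N},V')$ is a pre-filtration of $(\alg{M},V)$ through $\Theta$.

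No real obstacle is anticipated: the substantive work lies entirely in the classical construction of filtrations for K4-algebras, which is well documented in the literature and whose transitivity-preservation step is the only nontrivial computation. The remainder of the argument reduces to unfolding the definitions of filtration and pre-filtration and noting that the former is strictly stronger than the latter, exactly because pre-stability weakens stability only by dropping the partial preservation of $\square$, which is superfluous here since we already have the BDC$^\square$ on the relevant domain.
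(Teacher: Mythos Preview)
Your proposal is correct and takes essentially the same approach as the paper: the paper's entire proof is the one-sentence observation that filtrations of $\logic{K4}$-models always exist and that every filtration is automatically a pre-filtration. Your write-up simply unpacks this observation with a bit more detail about the underlying construction.
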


This result, however, is not strong enough to support the use of pre-filtration in, say, proofs to the effect that some normal extension of $\logic{GL}$ has the finite model property. To that end, we would need to strengthen the consequent to the effect that $\alg{N}$ be a Magari algebra. While we were not able to confirm this, we conjecture that this strengthening of \Cref{admitsfiltrationclm} is false. However, later in the paper we will be able to establish a slightly weaker result (\Cref{admitsfiltrationmagari}), which is nonetheless strong enough to support finite model property arguments for normal extensions of $\logic{GL}$ using pre-filtration.


\section{Pre-stable Canonical Rules}

We have introduced our notion of pre-filtration. We now turn to the task of syntactically encoding pre-filtrations through algebra-based rules. When $\alg{A}$ is an algebra, for each $a\in A$ introduce a fresh propositional variable $p_a$.

\begin{definition}[Pre-stable canonical rule, $\sim$ case]
		Let $\alg{H}$ be a finite frontal Heyting algebra and let $D:=(D^\to, D^\boxtimes)$ be respectively a binary and a unary domain on $\alg{H}$. The \emph{$\sim$ pre-stable canonical rule} $\scrsi{H}{D}$ is defined as the rule $\Gamma/\Delta$, where \label{def:prescrsim}
		\begin{align*}
			\Gamma=&\{p_0\leftrightarrow \bot\}\cup\{p_1\leftrightarrow \top\}\cup\\
			&\{p_{a\land b}\leftrightarrow p_a\land p_b:a, b\in H\}\cup \{p_{a\lor b}\leftrightarrow p_a\lor p_b:a, b\in H\}\cup\\
			& \{p_{a\to b}\leftrightarrow p_a\to p_b:(a, b)\in D^\to\} \cup \{p_{\boxtimes a}\leftrightarrow \boxtimes p_a: a\in D^\boxtimes\}\\
			\Delta=&\{p_a\leftrightarrow p_b:a, b\in H\text{ with } a\neq b\}.
		\end{align*}
	\end{definition}
	\begin{definition}[Pre-stable canonical rule, $\clm$ case]
		Let $\alg{M}$ be a finite $\logic{K4}$-algebra and let $D$ be a unary domain on $\alg{M}$.  The  \emph{$\clm$ stable canonical rule}  $\scrmod{A}{D}$ is defined as the rule $\Gamma/\Delta$, where \label{def:prescrclm}
		\begin{align*}
			\Gamma=&\{p_0\leftrightarrow \bot\}\cup\{p_1\leftrightarrow \top\}\cup\\
			&\{p_{a\land b}\leftrightarrow p_a\land p_b:a, b\in M\}\cup \{p_{\neg a}\leftrightarrow \neg p_a:a \in M\}\cup\\
			& \{p_{\square^+ a}\to \square^+ p_a:a\in M\} 	\cup \{p_{\square a}\leftrightarrow \square p_a:a\in D\} 			\\
			\Delta=&\{p_a:a\in A\smallsetminus 1\}.
		\end{align*}
	\end{definition}
In both cases, a pre-stable canonical rule \emph{fully} represents the truth functional structure of an algebra (i.e., the lattice structure of a frontal Heyting algebra and the Boolean structure of a $\logic{K4}$-algebra), but only represents non-truth functional structure on the distinguished domains. We use the notation $\scr{A}{D}$ to refer to either a $\sim$ or a $\clm$ pre-stable canonical rule, without specifying which.

Pre-stable canonical rules are useful because they have well behaved refutation conditions, connected to the pre-filtration construction. The next two results describe this connection in both its algebraic and dual version. Henceforth, when $D$ is a pair of domains on an algebra $\alg{A}$ as above, say that a map $h:\alg{A}\to \alg{B}$ satisfies the \emph{bounded domain condition} for $D$ when it does for both the domains occurring in $D$. 
\begin{proposition}
    Let $\alg{A}$ be a frontal Heyting or $\logic{K4}$-algebra and let $\scr{B}{D}$ be a pre-stable canonical rule of the appropriate kind. Then $\alg{A}\not\models \scr{B}{D}$ iff there is a pre-stable embedding $h:\alg{B}\to \alg{A}$ satisfying the BDC for $D$.\label{refutalg}
\end{proposition}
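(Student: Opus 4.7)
The plan is to establish both directions by exhibiting a natural correspondence between refuting valuations and pre-stable embeddings, given by the dictionary $h(b) \leftrightarrow V(p_b)$. The argument follows the standard pattern for stable canonical rules from \cite{BezhanishviliEtAl2016SCR, BezhanishviliBETVSCR}, suitably weakened to the pre-stable setting.

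For $(\Leftarrow)$, suppose $h:\alg{B}\to \alg{A}$ is a pre-stable embedding satisfying the BDC for $D$. Define a valuation $V$ on $\alg{A}$ by putting $V(p_b):=h(b)$ for each $b\in B$, and extending arbitrarily to other variables. The premises of $\scr{B}{D}$ asserting preservation of $\bot,\top,\land,\lor$ (in the $\sim$ case) or $\bot,\top,\land,\neg$ (in the $\clm$ case) are satisfied because $h$ is a bounded distributive lattice embedding (respectively a Boolean embedding). In the $\sim$ case, the premises indexed by $D^\to$ and $D^\boxtimes$ hold by the BDC assumption. In the $\clm$ case, the premises $p_{\square^+ a}\to \square^+ p_a$ hold by pre-stability of $h$, while the premises indexed by $D$ hold by the BDC$^\square$ assumption. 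As for conclusions, in the $\sim$ case $V(p_a\leftrightarrow p_b)=1$ would yield $h(a)=h(b)$ contradicting injectivity, and in the $\clm$ case $V(p_a)=1$ for $a\neq 1$ would yield $h(a)=1=h(1)$, again contradicting injectivity. Hence $\alg{A},V\not\models \scr{B}{D}$.

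For $(\Rightarrow)$, suppose $\alg{A},V\not\models \scr{B}{D}$ for some valuation $V$, so that $V$ makes every premise true and every conclusion false. Define $h:\alg{B}\to \alg{A}$ by $h(b):=V(p_b)$. The validity of the Boolean/lattice premises forces $h$ to preserve $\bot,\top,\land,\lor$ (or $\neg$) on the nose, so $h$ is a bounded distributive lattice or Boolean homomorphism. Injectivity comes from the failure of conclusions: in the $\sim$ case, if $h(a)=h(b)$ with $a\neq b$, then the computation $V(p_a\leftrightarrow p_b)=h(a)\leftrightarrow h(b)=1$ (using the lattice premises) contradicts the failure of that conclusion; the $\clm$ case is analogous, using $V(p_a)=V(p_a\leftrightarrow p_1)=1$ for $a\neq 1$. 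The BDC for $D$ is read off directly from the relevant premises, and in the $\clm$ case the premises $p_{\square^+ a}\to \square^+ p_a$ give $h(\square^+ a)\leq \square^+ h(a)$, yielding pre-stability.

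The argument is essentially bookkeeping, and the main place where care is needed is the $\clm$ case, where the rule explicitly encodes only the weaker pre-stability condition and the BDC$^\square$. This asymmetry is harmless thanks to \Cref{simplefact}: any pre-stable embedding satisfying the BDC$^\square$ for a domain automatically satisfies the BDC$^{\square^+}$ for the same domain, so the correspondence closes up exactly as described in \Cref{def:prescrclm}.
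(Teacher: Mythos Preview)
Your proposal is correct and follows essentially the same route as the paper: both directions are obtained via the dictionary $h(b)\leftrightarrow V(p_b)$, with the premises of the rule encoding exactly the homomorphism/pre-stability conditions and the conclusions encoding injectivity. The paper only spells out the $\sim$ case and declares the $\clm$ case analogous, whereas you also sketch the $\clm$ case; your closing remark invoking \Cref{simplefact} is correct but not needed for the statement itself.
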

\begin{proof}
    We show the $\sim$ case only, as the $\clm$ case is analogous. $(\Rightarrow)$ Suppose $\alg{A}\not\models \scr{B}{D}$ and take a valuation $V$ that witnesses this. Define a map $h:\alg{B}\to \alg{A}$ by putting $h(a):=V(p_a)$. This is a bounded lattice embedding, because the model $(\alg{A}, V)$ satisfies every formula in $\Gamma$. For the same reason, it satisfies the BDC for $D$. In the case of $D^\to$, for $(a, b)\in D^\to$ note 
    \begin{align*}
        h(a\to b)&=V(p_{a\to b}) &\\
        &=V(p_a\to p_b) &\text{By $\alg{A}, V\models p_{a\to b}\leftrightarrow (p_a\to p_b)$}\\
        &=V(p_a)\to V(p_b) &\\
        &=h(a)\to h(b).
    \end{align*}
    Likewise in the case of $\boxtimes$. 

    $(\Leftarrow)$ Let $h:\alg{B}\to \alg{A}$ be a pre-stable embedding satisfying the BDC for $D$. Define a valuation $V$ on $\alg{A}$ by setting $V(p_a):=h(a)$. Because $h$ is pre-stable, the model $(\alg{A}, V)$ satisfies all formulae in the first two lines from the definition of $\Gamma$ in \Cref{def:prescrsim}. From the fact that $h$ satisfies the BDC for $D$, on the other hand, we can show that the reamining formulae in $\Gamma$ are satisfied, by essentially reversing the reasoning summarized in the identities above. Finally, $a\neq b$ implies $\alg{A}, V\not\models a\leftrightarrow b$, so no formula in $\Delta$ is satisfied in $(\alg{A}, V)$. We have thus shown that $\alg{A}\not\models \scr{B}{V}$.
\end{proof}
\begin{proposition}
    Let $\spa{X}$ be a modalized Esakia or $\logic{K4}$-space and let $\scr{A}{D}$ be a pre-stable canonical rule of the appropriate kind. Then $\spa{X}\not\models \scr{A}{D}$ iff there is a pre-stable surjection $f:\spa{X}\to \dualspa{A}$ that satisfies the BDC for ${D}_*$.\footnote{See the statement of \Cref{stabledual} for the definitions of ${D}_*$.}  Moreover, the same remains true if we let $\spa{X}$ be a $\sim$- or $\clm$-Kripke frame and substitute $\dualkr{A}$ for $\dualspa{A}$. \label{refutspace}
\end{proposition}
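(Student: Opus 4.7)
The plan is to derive this from the algebraic version, \Cref{refutalg}, via the dualities established in \Cref{duality} (or \Cref{kripkedual} for Kripke frames) and \Cref{prestabledual}.

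First, I would use the relevant duality to identify refutation on $\spa{X}$ with refutation on its dual algebra, which I will denote $\alg{B}$. By \Cref{duality} or \Cref{kripkedual}, $\spa{X}$ is isomorphic to the dual of $\alg{B}$, and under this isomorphism valuations on $\spa{X}$ (valued in $\clopup[\leq]{X}$ or $\clop{X}$) correspond to valuations on $\alg{B}$ in a way compatible with every primitive connective; the interpretation of $\boxtimes$ or $\square$ on the space is defined precisely so as to match the algebraic operation on $\alg{B}$. Hence $\spa{X} \not\models \scr{A}{D}$ iff $\alg{B} \not\models \scr{A}{D}$.

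Next, by \Cref{refutalg} applied to $\alg{B}$, the latter refutation holds iff there is a pre-stable embedding $h:\alg{A}\to\alg{B}$ satisfying the BDC for $D$. Finally, by \Cref{prestabledual}, the existence of such an $h$ is equivalent to the existence of a pre-stable surjection $h^{-1}$ from the dual of $\alg{B}$ onto $\dualspa{A}$ (or $\dualkr{A}$ in the Kripke frame case) satisfying the appropriate BFC conditions for $D_*$. Composing with the duality isomorphism between $\spa{X}$ and the dual of $\alg{B}$ yields the required $f:\spa{X}\to \dualspa{A}$ (or $\dualkr{A}$).

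The heavy lifting has already been done in \Cref{refutalg,prestabledual}; the remaining work is purely bookkeeping. The one subtle verification is that the duality does indeed respect refutation, i.e., that a valuation on $\spa{X}$ refutes $\scr{A}{D}$ iff the corresponding valuation on $\alg{B}$ does. This follows routinely from the constructions given before \Cref{duality}: the lattice or Boolean operations in $\alg{B}$ coincide with the set-theoretic operations on $\clopup[\leq]{X}$ or $\clop{X}$ used to interpret the non-modal connectives, and $\boxtimes U = \{x:\upset[\sqsubset]{x}\subseteq U\}$ (and similarly for $\square$) matches the semantic clause on the space side. Thus no genuinely new construction is needed beyond the two cited propositions.
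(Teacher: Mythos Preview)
Your proposal is correct and takes essentially the same approach as the paper, which simply states that the result follows immediately from \Cref{refutalg} and \Cref{prestabledual}. Your write-up just makes the intermediate duality step (passing from $\spa{X}$ to its dual algebra $\alg{B}$) explicit, and you are right to note that the dual condition coming out of \Cref{prestabledual} is the BFC rather than the BDC; the paper's statement is slightly loose on this point, but the argument is the same.
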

\begin{proof}
    Follows immediately from \Cref{refutalg,prestabledual}.
\end{proof}
In view of \Cref{refutspace}, we adopt the convention of writing a pre-stable canonical rule $\scr{A}{D}$ as $\scr{\dualspa{A}}{D_*}$ when working with spaces or Kripke frames. Note that since any such $\alg{A}$ is finite, $\dualkr{A}$ is always defined and equals $\dualspa{A}$. 

We are now ready to prove the main result that underwrites the usefulness of pre-stable canonical rules: that \emph{every} rule is equivalent (over a suitable base system) to a finite conjunction of pre-stable canonical rules. 
\begin{theorem}
    Let $\Gamma/\Delta$ be either a $\sim$-rule or $\clm$-rule. The following conditions hold:\label{rewrite}
    \begin{enumerate}
        \item \emph{$\sim$ case}: there is a finite set $\Phi$ of $\sim$ pre-stable canonical rules based on frontons, such that a fronton $\alg{H}$ refutes $\Gamma/\Delta$ iff it refutes some rule $\scrsi{K}{D}\in \Phi$;
        \item \emph{$\clm$ case}: there is a finite set $\Phi$ of $\clm$ pre-stable canonical rules such that a $\logic{K4}$-algebra $\alg{M}$ refutes $\Gamma/\Delta$ iff it refutes some rule $\scrsi{N}{D}\in \Phi$.
    \end{enumerate}
\end{theorem}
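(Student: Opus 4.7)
The plan is to adapt the strategy for stable canonical rules from \cite{BezhanishviliBETVSCR, BezhanishviliEtAl2016SCR}: encode the refutation of an arbitrary rule by means of finitely many pre-stable canonical rules extracted from pre-filtrations. Fix $\Theta$ to be the finite, subformula-closed set of all subformulae occurring in $\Gamma\cup \Delta$.

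\emph{Construction of $\Phi$.} Up to isomorphism, consider all pairs $(\alg{B}, V_B)$ where $\alg{B}$ is a finite fronton (in the $\sim$ case) or finite $\logic{K4}$-algebra (in the $\clm$ case), $V_B$ is a valuation on $\alg{B}$ with $\alg{B}, V_B\not\models \Gamma/\Delta$, and the bounded distributive lattice reduct (resp.\ Boolean reduct) of $\alg{B}$ is generated by a finite superset of $V_B[\Theta]$ of size bounded by some fixed function of $|\Theta|$. For each such pair let $D_{V_B}$ consist of the valuations of $\to$- and $\boxtimes$-subformulae (resp.\ $\square$-subformulae) from $\Theta$, exactly as in the definition of pre-filtration, and put $\scr{B}{D_{V_B}}$ in $\Phi$. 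In the $\clm$ case one has $|B|\leq 2^{2^{|\Theta|}}$ directly. In the $\sim$ case, the iterative procedure in the proof of \Cref{admitsprefiltrationsim} terminates after at most $|\Theta|$ steps and at each step only adds elements built from material already present; a straightforward size bound yields a fixed (iterated exponential) upper bound on $|B|$ in terms of $|\Theta|$. Since $V_B$ is relevant only up to its behaviour on $\mathit{Prop}(\Theta)$, there are only finitely many pairs $(\alg{B}, V_B)$ modulo isomorphism, and therefore $\Phi$ is finite.

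For the $(\Rightarrow)$ direction, suppose $\alg A, V\not\models \Gamma/\Delta$. By \Cref{admitsprefiltrationsim} in the $\sim$ case (resp.\ \Cref{admitsfiltrationclm} in the $\clm$ case) there is a pre-filtration $(\alg B, V')$ of $(\alg A, V)$ through $\Theta$, with $\alg B$ of the appropriate kind. By \Cref{prefiltrationtheorem} (and its routine $\clm$ analogue) we have $\alg B, V'\not\models\Gamma/\Delta$, so $\scr{B}{D_{V'}}\in \Phi$. The inclusion $\subseteq\colon \alg B\hookrightarrow \alg A$ is by \Cref{def-prefiltration} a pre-stable embedding satisfying the BDC for $D_{V'}$, so \Cref{refutalg} yields $\alg A\not\models \scr{B}{D_{V'}}$. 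For $(\Leftarrow)$, assume $\alg A\not\models \scr{B}{D_{V_B}}$ for some rule in $\Phi$. By \Cref{refutalg} there is a pre-stable embedding $h\colon \alg B\to \alg A$ satisfying the BDC for $D_{V_B}$. Define a valuation $V$ on $\alg A$ by $V(p):=h(V_B(p))$, and prove by induction on $\varphi\in \Theta$, using the BDC conditions precisely to dispatch the non-truth-functional subformulae, that $V(\varphi)=h(V_B(\varphi))$. Injectivity of $h$ then transfers the refutation witnessed by $V_B$ up to $\alg A$.

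The main obstacle is the uniform size bound on $\alg B$ in the $\sim$ case, since the fronton-generation step $\alg K_i\leadsto \alg K_{i+1}$ in the proof of \Cref{admitsprefiltrationsim} repeatedly enlarges the underlying lattice. After at most $|\Theta|$ such steps one must verify that the cardinality remains bounded by a fixed function of $|\Theta|$, independent of the ambient fronton $\alg A$; this is the one genuine calculation the argument demands. Once it is done, finiteness of $\Phi$ up to isomorphism is immediate, and the remainder of the argument is bookkeeping via \Cref{refutalg} and the pre-filtration theorems.
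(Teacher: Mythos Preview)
Your proposal is correct and follows essentially the same strategy as the paper: build $\Phi$ from the finitely many (up to isomorphism) bounded-size pre-filtrations refuting $\Gamma/\Delta$, use \Cref{admitsprefiltrationsim} (resp.\ \Cref{admitsfiltrationclm}) together with \Cref{refutalg} for $(\Rightarrow)$, and transfer the refutation back along the pre-stable embedding for $(\Leftarrow)$. The paper makes the size bound explicit---setting $M_0(k)=M(k)$ and $M_{i+1}(k)=M(2\cdot M_i(k))$ with $k=|\Theta|$ and $n=|\{\varphi:\boxtimes\varphi\in\Theta\}|$ iterations---and phrases $(\Leftarrow)$ by recognizing $(\alg B,V_B)$ as a pre-filtration of $(\alg A,h\circ V_B)$ and invoking \Cref{prefiltrationtheorem}, whereas you unwind that invocation into the direct induction on $\varphi\in\Theta$; these are the same argument.
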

\begin{proof}
    \emph{$\sim$ case}: let $M(k)$ be the cardinality of the free bounded distributive lattice on $k$ generators. Let $M_0(k):=M(k)$ and $M_{i+1}(k)=M(2\cdot M_i(k))$. We let $\Phi$ be the set of all $\sim$ pre-stable canonical rules $\scrsi{K}{D}$ such that:
    \begin{enumerate}
        \item $\alg{K}$ is a fronton generated, as a bounded distributive lattice, by at most $M_n(k)$ elements, where $k=|\mathit{Sfor}(\Gamma/\Delta)|$ and $n=|\{\varphi:\boxtimes\varphi\in \mathit{Sfor}(\Gamma/\Delta)\}|$;
        \item $D=(D^\to, D^\boxtimes)$, where, for some valuation $V$ on $\alg{K}$ such that $\alg{K}, V\not\models \Gamma/\Delta$, the binary domain $D^\to$ consists precisely of the pairs $(V(\varphi), V(\psi))$ with $\varphi\to\psi\in \mathit{Sfor}(\Gamma/\Delta)$, and the unary domain $D^\boxtimes$ consists precisely of the elements $V(\varphi)$ with $\boxtimes \varphi\in \mathit{Sfor}(\Gamma/\Delta)$. 
    \end{enumerate}
    Each algebra $\alg{K}$ as above must be finite because bounded distributive lattices are locally finite, and since we have fixed the cardinality of the generators there are only finitely many such rules $\scrsi{K}{D}$, up to isomorphism of the underlying algebras. 

    $(\Rightarrow)$ Assume $\alg{H}\not\models \Gamma/\Delta$ and pick a witnessing valuation $V$. Construct a pre-filtration $(\alg{K}, V')$ of $(\alg{H}, V)$ through $\mathit{Sfor}(\Gamma/\Delta)$, which is always possible by \Cref{admitsprefiltrationsim}. In addition, note that the construction used to prove \Cref{admitsprefiltrationsim} ensures that $\alg K$ can be chosen to meet the cardinality constraints on the generators above. Use the valuation $V'$ to construct $D:=(D^\to, D^\boxtimes)$ as above. Since $(\alg{K}, V')$ is a filtration of $(\alg{H}, V)$, the inclusion $\subseteq:\alg{K}\to \alg{H}$ is a pre-stable embedding satisfying the BDC for $D$, showing $\alg{H}\not\models \scrsi{K}{D}$. By \Cref{prefiltrationtheorem} we have $\alg{K}, V'\not\models \Gamma/\Delta$, so $\scrsi{K}{D}\in \Phi$. 

    $(\Leftarrow)$ Assume $\alg{H}\not\models \scrsi{K}{D}$ for some $\scrsi{K}{D}\in \Phi$. Let $V$ be a valuation on $\alg{K}$ that can be used to construct $D$ as per the second item above. This can also be seen as a valuation on $\alg{H}$. Moreover, the model $(\alg{K}, V)$ is a filtration of $(\alg{H}, V)$ through $\mathit{Sfor}(\Gamma/\Delta)$. By \Cref{prefiltrationtheorem}, it follows that $\alg{H}, V\not\models \Gamma/\Delta$.

    \emph{$\clm$ case}: the proof is completely analogous, save for some minor adaptations in the construction of $\Phi$. In this case, we let $\Phi$ consist of all $\clm$ pre-stable canonical rules $\scrmod{N}{D}$ such that:
    \begin{enumerate}
        \item $\alg{N}$ is a $\logic{K4}$-algebra generated, as a Boolean algebra, by at most $|\mathit{Sfor}(\Gamma/\Delta)|$ elements;
        \item For some valuation $V$ on $\alg{N}$ such that $\alg{N}, V\not\models \Gamma/\Delta$, the unary domain $D$ consists precisely of the elements $V(\varphi)$ with $\square\varphi\in \mathit{Sfor}(\Gamma/\Delta)$.
    \end{enumerate}
    Since Boolean algebras are locally finite, each such $\alg{N}$ is finite, and $\Phi$ is finite because the number of generators has been fixed. 
\end{proof}
Note \Cref{rewrite} does not imply that every $\clm$ rule is equivalent \emph{over $\logic{GL}$} to finitely many pre-stable canonical rules \emph{based on Magari algebras}. To establish that result, we would need a stronger version of \Cref{admitsfiltrationclm}, establishing the existence of enough pre-filtration based on Magari algebras. As anticipated, a result of this sort holds true, though we will need to go through some theory of monomodal companions in the next section before we can establish it. 

\begin{corollary}
    The following conditions hold:
    \begin{enumerate}
        \item Every $\sim$-rule system is axiomatizable over $\logic{KM}$ by $\sim$ pre-stable canonical rules based on frontons;
        \item Every $\clm$-rule system above $\logic{K4}$ is axiomatizable over $\logic{K4}$ by $\clm$ pre-stable canonical rules.
    \end{enumerate}
\end{corollary}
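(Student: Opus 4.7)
The plan is to derive both items as straightforward corollaries of \Cref{rewrite} together with the algebraic completeness given by \Cref{algebraization}. Let me write out the $\sim$ case; the $\clm$ case is entirely parallel.

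Fix a $\sim$-rule system $\logic{S} \in \lat{NExt}(\logic{KM})$. Set $\Xi := \logic{S}$ (or, equivalently, any set of $\sim$-rules axiomatizing $\logic{S}$ over $\logic{KM}$). For each rule $\Gamma/\Delta \in \Xi$, apply \Cref{rewrite} to obtain a finite set $\Phi_{\Gamma/\Delta}$ of $\sim$ pre-stable canonical rules based on frontons such that any fronton $\alg{H}$ refutes $\Gamma/\Delta$ iff it refutes some rule in $\Phi_{\Gamma/\Delta}$. Then set $\Phi := \bigcup_{\Gamma/\Delta \in \Xi} \Phi_{\Gamma/\Delta}$.

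The claim is that $\Phi$ axiomatizes $\logic{S}$ over $\logic{KM}$, i.e., that $\logic{KM} \oplus \Phi = \logic{S}$. By \Cref{algebraization}\ref{compunimhc}, it is enough to check that $\op{Alg}(\logic{KM} \oplus \Phi) = \op{Alg}(\logic{S})$. For the inclusion from left to right, let $\alg{H} \in \op{Alg}(\logic{KM} \oplus \Phi)$; then $\alg{H}$ is a fronton and validates every rule in $\Phi$, so by \Cref{rewrite} it validates every $\Gamma/\Delta \in \Xi$, showing $\alg{H} \in \op{Alg}(\logic{S})$. Conversely, if $\alg{H} \in \op{Alg}(\logic{S})$, then $\alg{H}$ is a fronton (since $\logic{KM} \subseteq \logic{S}$) validating each $\Gamma/\Delta \in \Xi$, and a further application of \Cref{rewrite} yields $\alg{H} \models \scrsi{K}{D}$ for every $\scrsi{K}{D} \in \Phi$, as required.

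The $\clm$ case is identical mutatis mutandis: one starts with $\logic{S} \in \lat{NExt}(\logic{K4})$, applies \Cref{rewrite}\emph{(ii)} to each rule in an axiomatization of $\logic{S}$ over $\logic{K4}$, and appeals to \Cref{algebraization}\ref{compunik4}. There is essentially no obstacle here: the content of the result lies entirely in \Cref{rewrite}, which establishes the existence of ``enough'' pre-stable canonical rules witnessing refutation; the corollary merely packages this via the algebraic Galois connection. The one subtlety worth flagging is that, in the $\sim$ case, one must be careful that the pre-stable canonical rules produced by \Cref{rewrite} are based on \emph{frontons}, not merely on frontal Heyting algebras, since we are axiomatizing over $\logic{KM}$ rather than over $\logic{mHC}$; but this is guaranteed by the construction in the proof of \Cref{rewrite}, which relies on \Cref{admitsprefiltrationsim} and hence yields frontons throughout.
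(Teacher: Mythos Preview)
Your proposal is correct and follows essentially the same approach as the paper: take an axiomatization and rewrite each rule via \Cref{rewrite}. The paper's proof is a one-line sketch to this effect; your version simply fills in the details by invoking \Cref{algebraization} to turn the semantic equivalence from \Cref{rewrite} into a syntactic equality of rule systems, which is the natural way to make the argument precise.
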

\begin{proof}
    In either case, take an arbitrary axiomatization of the desired rule system and use \Cref{rewrite} to rewrite it in terms of pre-stable canonical rules. 
\end{proof}

The upshot of this section is that, in the right circumstances, we are now entitled to assume without loss of generality that we are always working with pre-stable canonical rules. The problem of checking whether a given rule is refuted by an algebra has been reduced to the problem of establishing whether pre-stable embeddings with the relevant properties exist.


%

\section{Monomodal Companions}

We now apply pre-stable canonical rules to the theory of monomodal companions of $\sim$-rule systems. 

\subsection{Mappings and Translations}

Recall that the \emph{free Boolean extension} of a Heyting algebra $\alg{H}$ is the unique Boolean algebra $B(\alg{H})$ in which $\alg{H}$ embeds as a distributive lattice, such that the image of $\alg{H}$ under this embedding generates $B(\alg{H})$ as a Boolean algebra [\citealp[Def.\ 2.5.6, Constr.\ 2.5.7]{Esakia2019HADT}; \citealp[Sec.\ V]{BalbesDwinger1975DL}]. For simplicity, we will generally identify $\alg{H}$ with its image in $B(\alg{H})$. Note this convention is used in the definitions to follow.

    If $\alg{H}$ is a frontal Heyting algebra, we define $\greatest\alg{H}$ by expanding $B(\alg{H})$ with the operation
    \[
    \square a:=\boxtimes Ia,
    \]
    where 
    \[
    Ia:=\bigvee\{b\in H:b\leq a\}.
    \]
    By the properties of free Boolean extensions, $Ia$ always exists and indeed belongs to $H$.
    Conversely, if $\alg{M}$ is a $\logic{K4}$-algebra we define $\fragment\alg{M}$ as follows. First, define the \emph{quasi-open elements} of $\alg{M}$
    \[O^+(\alg{M}):=\{a\in M: \square^+ a=a\}.\]
    This is a bounded distributive sublattice of $\alg{M}$. We let $\fragment\alg{M}$ be the result of expanding $O^+(\alg{M})$ with the operations 
    \begin{align*}
        a\to b&:=\square^+ (\neg a\lor b),\\
        \boxtimes a&:= \square a.
    \end{align*}
    Since $\square^+\square^+ a=\square^+ a$ and $\square^+\square\square^+ a=\square a$ for each $a\in M$, both operations are well defined. 

    We now give a dual description of these constructions. If $\spa{X}$ is a modalized Esakia space, let $\greatest\spa{X}$ be the $\leq$-free reduct of $\spa{X}$. To emphasize that we view $\greatest\spa X$ as a modal space, we will denote the remaining binary relation in $\greatest\spa X$ as $R$ instead of $\sqsubset$. Conversely, let $\spa{X}$ be a $\logic{K4}$-space. Define an equivalence relation on $\spa{X}$ by putting $x\backsim y$ iff either $x=y$ or both $Rxy$ and $Ryx$. Let $\varrho$ be the quotient map induced by $\backsim$. We define $\fragment\spa{X}$ by endowing $\varrho[X]$ with the quotient topology and expanding the resulting space with the binary relations
    \[\varrho (x)\leq \varrho (y):\iff R^+xy,\qquad\qquad \varrho (x)\sqsubset \varrho (y):\iff Rxy.\]
    These definitions do not depend on the choice of $x, y$: we could have equivalently defined $\varrho (x)\leq \varrho (y)$ iff there are $z\backsim x$ and $w\backsim y$ with $R^+zw$, and likewise for $\sqsubset$. These constructions are readily extended to Kripke frames, applying our convention of regarding a Kripke frame as endowed with the discrete topology. 

    \begin{proposition}
        The identities $\dualspa{(\greatest H)}=\greatest(\dualspa{H})$ and $\dualspa{(\fragment  M)}=\fragment(\dualspa{M})$ hold for every frontal Heyting algebra $\alg{H}$ and any $\logic{K4}$-algebra $\alg{M}$. Moreover, the second identity remains true if we replace $\dualkr{(\cdot)}$ for $\dualspa{(\cdot)}$, provided $\fragment\alg M$ is defined. 
    \end{proposition}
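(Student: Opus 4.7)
The plan is to verify both equalities by constructing explicit bijections between underlying points and then checking that topologies and relations coincide. For the first identity $\dualspa{(\greatest H)}=\greatest(\dualspa{H})$, I would begin from the standard fact about free Boolean extensions: every prime filter of $\alg H$ extends uniquely to an ultrafilter of $B(\alg H)$, and every ultrafilter of $B(\alg H)$ restricts to a prime filter of $\alg H$. Under this bijection, the Stone topologies coincide, since both have a basis given by the image of the $\beta$-map on $B(\alg H)$. The non-trivial verification is that the modal relation on $\dualspa{(\greatest H)}$, defined via $\square b=\boxtimes Ib$, coincides with $\sqsubset$ on $\dualspa{H}$. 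The easy direction uses the identity $Ia=a$ for $a\in H$; the converse uses that $Ib\in H$ and $Ib\leq b$, together with upward closure of prime filters.

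For the second identity $\dualspa{(\fragment M)}=\fragment(\dualspa{M})$, the key step is to exhibit a bijection between prime filters of the distributive lattice $O^+(\alg M)$ and $\backsim$-classes of $\dualspa{M}$. The bijection sends a $\backsim$-class $C$ to the prime filter $F_C:=\{a\in O^+(\alg M):C\subseteq \beta(a)\}$. This is well-defined because quasi-open elements dualize precisely to clopen $R^+$-upsets of $\dualspa{M}$: the identity $a=\square^+ a$ unpacks to $\beta(a)$ being closed under $R$, and every such set is automatically $\backsim$-saturated. Surjectivity follows because $O^+(\alg M)$ can be identified with the lattice of clopen $\leq$-upsets of the quotient $\fragment(\dualspa{M})$, so standard Priestley-style duality identifies prime filters of $O^+(\alg M)$ with points of that quotient. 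The Stone topology on $\dualspa{(\fragment M)}$ then matches the quotient topology on $\fragment(\dualspa{M})$, because each basic clopen $\beta(a)$ corresponds under the bijection to $\varrho[\beta(a)]$.

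It then remains to check that the relations match. Prime filter inclusion $F_C\subseteq F_{C'}$ holds iff every clopen $R^+$-upset containing $C$ also contains $C'$, iff $xR^+y$ for any (equivalently all) representatives $x\in C$ and $y\in C'$, which is precisely $\leq$ on $\fragment(\dualspa{M})$. The modal relation $F_C\sqsubset F_{C'}$ unpacks, via $\boxtimes:=\square$ on $\fragment M$, to the condition that $\square a\in F_C$ implies $a\in F_{C'}$ for all $a\in O^+(\alg M)$; using the $\mathit{mix}$ identity $\square a=\square^+\square\square^+ a$ one checks that this reduces to $Rxy$ for representatives, matching $\sqsubset$ on $\fragment(\dualspa{M})$. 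The Kripke frame versions follow by systematically substituting completely join prime filters for prime filters and equipping the frames with the discrete topology, provided $\dualkr{M}$ and $\dualkr{(\fragment M)}$ are defined. The main obstacle is the bookkeeping involved in verifying that algebraic operations defined in $B(\alg M)$ restrict correctly to $O^+(\alg M)$ and then dualize to the right relations on the quotient; once the identification of quasi-open elements with clopen $R^+$-upsets is in place, however, everything else follows by routine unpacking of definitions.
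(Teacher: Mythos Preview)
Your proposal is correct and follows essentially the same route as the paper: both identify prime filters of $B(\alg H)$ with prime filters of $\alg H$ for the first identity, and prime filters of $O^+(\alg M)$ with $\backsim$-classes of $\dualspa{M}$ for the second, then verify that relations match. Your treatment is in fact slightly more careful than the paper's in one place: the paper dismisses the reflection of $\sqsubset$ under the bijection as ``analogous'' to preservation, whereas you correctly flag that the converse direction requires the identity $\square b=\square(\square^+ b)$ (a consequence of \emph{mix}) to pass from quasi-open witnesses back to arbitrary $b\in M$.
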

    \begin{proof}
        Note that $B(\alg H)$ is isomorphic to $\clop{\dualspa{H}}$, so $\dualspa{(\greatest H)}=\greatest(\dualspa{H})$ are isomorphic as Boolean algebras. Let $f:\dualspa{(\greatest H)}\to \greatest(\dualspa{H})$ be the bijection determined by the identity mapping on the Stone space dual to $B(\alg H)$. That $Rxy$ implies $Rf(x)f(y)$ is obvious. Conversely, assume $Rf(x)f(y)$ and take $U\in \clop{\dualspa{(\greatest H)}}$. Suppose $y\notin U$. By the definition of $\greatest\alg H$, 
        \[\square U=\boxtimes\bigcup\{V\in \clopup[\leq]{X}:V\subseteq U\}.\]
        Since $y\notin U$, also $\bigcup\{V\in \clopup[\leq]{X}:V\subseteq U\}$, so $x\notin \square U$. This shows $Rxy$. 

        For the second identity, first observe that given $x, y\in \dualspa M$ we have $\varrho (x)=\varrho (y)$ iff $x$ and $y$ contain the same quasi-open elements from $\alg M$. Since the quasi-open elements in a prime filter of $\alg M$ form a prime filter in $\fragment \alg M$ and every prime filter of $\fragment \alg M$ can be extended to a prime filter in $\alg M$ containing it, the map $f: \fragment (\dualspa M)\to \dualspa{(\fragment M})$ where $f(\varrho (x))$ is the prime filter on $\fragment \alg M$ consisting of all the quasi-open elements shared by all members of $\varrho(x)$ is a bijection. It should also be clear that $f$ preserves and reflects $\leq$. To see that it also preserves and reflects $\sqsubset$, let $\rho(x), \rho (y)\in \fragment (\dualspa M)$ and suppose $\rho(x)\sqsubset \rho(y)$. Take $a\in \fragment \alg M$ and suppose $a\notin f(\rho(y))$. Then also $a\notin \rho (y)$, so $\boxtimes a\notin \rho (x)$, and in turn $\boxtimes a\notin f(\rho (x))$. The other direction is analogous. 
    \end{proof}
    \begin{remark}
        It is important to point out that $\dualkr{H}$ being well defined does not guarantee that the identity $\dualkr{(\greatest H)}=\greatest(\dualkr{H})$ holds, even when both sides are well defined. As we shall see in a moment, $\greatest H$ is always a $\logic{K4.Grz}$-algebra, so $\dualkr{(\greatest H)}$ must be a $\logic{K4.Grz}$-frame if defined. However, $\greatest(\dualkr{H})$ need not be a $\logic{K4.Grz}$-frame, for it might not be conversely well-founded. 
    \end{remark}

    Using this dual description of the mappings $\greatest$ and $\fragment$, we can prove a few key facts about them. 
    \begin{proposition}
        The identity $\fragment \greatest \alg H=\alg H$ holds for every frontal Heyting algebra $\alg{H}$. \label{rhosigmaid}
    \end{proposition}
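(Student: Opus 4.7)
The plan is to argue by duality, reducing the algebraic identity to a purely spatial one. By the preceding proposition applied to $\greatest \alg H$ (viewed as a $\logic{K4}$-algebra) and then to $\alg H$, we have
\[
\dualspa{(\fragment\greatest\alg H)} \;=\; \fragment(\dualspa{\greatest\alg H}) \;=\; \fragment\greatest(\dualspa{\alg H}).
\]
By Theorem \ref{duality}, the functor $\dualspa{(\cdot)}$ is a dual equivalence between frontal Heyting algebras and modalized Esakia spaces, so it suffices to prove the spatial statement $\fragment\greatest\spa X = \spa X$ for every modalized Esakia space $\spa X$. The original identity on $\alg H$ then follows by dualizing back.

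To establish $\fragment\greatest\spa X = \spa X$, I would first unpack $\greatest\spa X$: by definition it has the same carrier and topology as $\spa X$, and a single binary relation $R$, which is just $\sqsubset$. Next I would show that the equivalence $\backsim$ used in forming $\fragment\greatest\spa X$ is the identity. By definition, $x \backsim y$ iff $x = y$ or both $\sqsubset xy$ and $\sqsubset yx$. But in a modalized Esakia space the reflexive closure of $\sqsubset$ coincides with $\leq$, which is a partial order; hence $\sqsubset xy$ and $\sqsubset yx$ force $x \leq y$ and $y \leq x$, and therefore $x = y$. Thus the quotient map $\varrho$ is a bijection, and the quotient topology on $\varrho[X]$ agrees with the topology of $\spa X$.

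It then remains to match the two binary relations of $\fragment\greatest\spa X$ against those of $\spa X$. By definition $\varrho(x) \leq \varrho(y)$ iff $R^+ x y$, and in $\greatest \spa X$ this means $x = y$ or $\sqsubset xy$, i.e. $x \leq y$ in $\spa X$; similarly $\varrho(x) \sqsubset \varrho(y)$ iff $R x y$, i.e. $\sqsubset xy$ in $\spa X$. So $\fragment \greatest \spa X$ and $\spa X$ agree on carrier, topology, and both relations, establishing equality.

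The one delicate ingredient is the use of antisymmetry of $\leq$ to collapse $\backsim$ to the identity; this is needed precisely because the paper's definition of modalized Esakia space lists $\leq$ as reflexive and transitive, but, as in any Esakia space, $\leq$ is in fact a partial order (this is the only place antisymmetry is really used in the argument). Everything else reduces to unwinding the definitions of $\greatest$ and $\fragment$ on the dual side, which is routine; so I expect no substantive obstacles beyond this bookkeeping.
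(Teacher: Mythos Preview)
Your proposal is correct and follows essentially the same approach as the paper, which simply says ``Dually, composing $\sigma$ with $\rho$ obviously yields the identity mapping.'' You have unpacked this one-line argument in full detail; your observation about needing antisymmetry of $\leq$ is well taken (the paper's definition only lists reflexivity and transitivity, but antisymmetry is standard for Esakia spaces and holds automatically on dual spaces since $\leq$ is prime filter inclusion).
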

    \begin{proof}
        Dually, composing $\sigma$ with $\rho$ obviously yields the identity mapping.
    \end{proof}
    \begin{proposition}
        There is a modal algebra embedding of $\greatest\fragment\alg{M}$ into $\alg{M}$, for every $\logic{K4}$-algebra $\alg{M}$. \label{sigmarhosub}
    \end{proposition}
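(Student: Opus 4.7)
The plan is to construct the embedding algebraically via the universal property of the free Boolean extension. The inclusion $\iota : O^+(\alg M) \hookrightarrow \alg M$ is a bounded distributive lattice homomorphism into the Boolean reduct of $\alg M$; by the universal property of $B(O^+(\alg M))$, it extends uniquely to a Boolean algebra homomorphism $g : B(O^+(\alg M)) \to \alg M$ fixing $O^+(\alg M)$ pointwise. Moreover $g$ is injective, since the free Boolean extension of a bounded distributive lattice embeds into every Boolean algebra that contains it as a bounded sublattice.

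The main work is to verify that $g$ preserves $\square$. Fix $a \in B(O^+(\alg M))$. By the definition of $\square$ on $\greatest \fragment \alg M$, we have $\square a = \boxtimes^{\fragment \alg M}(Ia) = \square^{\alg M}(Ia)$, where $Ia = \bigvee\{b \in O^+(\alg M) : b \leq a\}$ with the join taken in $B(O^+(\alg M))$, and crucially $Ia \in O^+(\alg M)$. Since $g$ fixes $O^+(\alg M)$, we obtain $g(\square a) = \square(Ia)$ in $\alg M$, so it suffices to show $\square(Ia) = \square g(a)$.

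I would argue this in two steps. First, $Ia = \square^+ g(a)$: since $g$ is an order embedding fixing $O^+(\alg M)$, for every $b \in O^+(\alg M)$ we have $b \leq_{B(O^+(\alg M))} a$ iff $b \leq_{\alg M} g(a)$, so $Ia$ is also the largest element of $O^+(\alg M)$ below $g(a)$ in $\alg M$. That this coincides with $\square^+ g(a)$ follows from the monotonicity and idempotency of $\square^+$: the element $\square^+ g(a)$ is quasi-open and below $g(a)$, and any quasi-open $c$ with $c \leq_{\alg M} g(a)$ satisfies $c = \square^+ c \leq \square^+ g(a)$. Second, $\square \square^+ g(a) = \square g(a)$, which is a general identity in any $\logic{K4}$-algebra: $\square \square^+ x = \square(x \land \square x) = \square x \land \square \square x = \square x$, the last step using transitivity. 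Chaining the two yields $\square(Ia) = \square g(a)$, as required.

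The only delicate point is the bookkeeping in the first step, requiring one to distinguish the orders in $B(O^+(\alg M))$ and in $\alg M$ and to verify their agreement on comparisons between quasi-open elements via $g$. A dual route is also available: by the identities $\dualspa{(\greatest H)} = \greatest \dualspa H$ and $\dualspa{(\fragment M)} = \fragment \dualspa M$ from the preceding proposition, the modal space dual to $\greatest \fragment \alg M$ is the $\leq$-free reduct of $\fragment \dualspa \alg M$, and the quotient map $\varrho : \dualspa \alg M \to \fragment \dualspa \alg M$ restricts to a surjective bounded morphism of modal spaces whose dual under \Cref{duality}(2) is exactly the required embedding.
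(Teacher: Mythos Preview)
Your proof is correct. The paper takes exactly the dual route you sketch in your final paragraph: it simply observes that the cluster collapse map $\varrho:\dualspa{M}\to \greatest\fragment\dualspa{M}$ is a surjective bounded morphism and invokes duality to obtain the embedding, in one sentence.

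Your primary argument is genuinely different and more informative. You work purely algebraically, using the universal property of the free Boolean extension together with the identification $Ia=\square^+ g(a)$ and the $\logic{K4}$ identity $\square\square^+=\square$. This makes explicit \emph{why} $\square$ is preserved, rather than deferring it to the bounded-morphism check on the dual side, and it avoids any appeal to \Cref{duality} or to the preceding proposition about commutation of $\greatest,\fragment$ with $(\cdot)_*$. The cost is a few more lines and the bookkeeping you flag about comparing orders in $B(O^+(\alg M))$ versus $\alg M$; the paper's one-line dual argument hides precisely that work inside the verification that $\varrho$ satisfies the back condition. A minor wording point: in your closing remark, $\varrho$ does not ``restrict'' to a bounded morphism of modal spaces---it is the same map, merely viewed with $\greatest\fragment\dualspa{M}$ as target; the paper phrases it that way.
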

    \begin{proof}
        The cluster collapse map $\varrho:\dualspa{M}\to \greatest\fragment\dualspa{M}$ is a surjective bounded morphism, so its dual is a modal algebra embedding $\varrho_*:\greatest\fragment \alg M\to \alg M$. 
    \end{proof}
    \begin{proposition}
        Let $\alg{H}$ and $\alg{M}$ be a frontal Heyting algebra and a $\logic{K4}$-algebra respectively. Then $\greatest\alg H$ is a $\logic{K4.Grz}$-algebra and $\fragment \alg M$ is a frontal Heyting algebra. Moreover, if $\alg{H}$ is a fronton, then $\greatest\alg H$ is a Magari algebra, and if $\alg{M}$ is a Magari algebra then $\fragment \alg{M}$ is a $\logic{KM}$-algebra. 
    \end{proposition}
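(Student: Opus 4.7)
The plan is to handle the four statements separately, combining direct algebraic verification with a dual argument for the subtlest step.

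For $\greatest\alg{H}$ being a $\logic{K4}$-algebra I will first verify normality ($\square 1 = \boxtimes I 1 = 1$, and $\square(a \land b) = \boxtimes(Ia \land Ib) = \square a \land \square b$, using that $I$ preserves finite meets) and then transitivity: since $\square a = \boxtimes Ia \in H$ we have $I(\square a) = \square a$, so $\square\square a = \boxtimes \boxtimes Ia$, and the frontal axiom $x \leq \boxtimes x$ with $x = \boxtimes Ia$ yields $\square a \leq \square \square a$. For the Grzegorczyk axiom I would pass to the dual modalized Esakia space $\dualspa{H}$: the modal relation $R$ of $\greatest\dualspa{H}$ is just $\sqsubset$, whose reflexive closure is the partial order $\leq$, so $R^+$ has no proper clusters; combined with Esakia's maximality lemma (every nonempty closed subset has $\leq$-maximal elements), this suffices for $\greatest\dualspa{H}$ to be a $\logic{K4.Grz}$-space, and duality lifts the conclusion back to $\greatest\alg{H}$.

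For $\fragment\alg{M}$ being frontal Heyting, the plan is to verify the four frontal axioms directly on $O^+(\alg{M})$. Normality of $\boxtimes := \square$ transfers from normality of $\square$ in $\alg{M}$. The inequality $a \leq \boxtimes a$ follows because $a = \square^+ a = a \land \square a$ for quasi-open $a$. The axiom $\boxtimes a \leq b \lor (b \to a)$ unfolds to $\square a \leq b \lor \square^+(\neg b \lor a)$, which I would establish by splitting $\square a = (\square a \land b) \lor (\square a \land \neg b)$ and bounding $\square a \land \neg b \leq (\neg b \lor a) \land \square(\neg b \lor a) = \square^+(\neg b \lor a)$ via monotonicity of $\square$.

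For the fronton case on the $\sim$-side, the target is $\boxtimes I(\neg \square a \lor a) \leq \boxtimes Ia$ in $\greatest\alg{H}$. Setting $c := I(\neg \square a \lor a)$ and $d := Ia$, by monotonicity of $\boxtimes$ it suffices to show $c \leq d$. From $c \leq \neg \square a \lor a$ I get $c \land \boxtimes d = c \land \square a \leq a$; since $c \land \boxtimes d \in H$ and $d$ is the largest Heyting element below $a$, we have $c \land \boxtimes d \leq d$. The Heyting adjunction then gives $c \leq \boxtimes d \to d$ in $\alg{H}$, and the fronton axiom $\boxtimes d \to d \leq d$ finishes the job. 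For the converse direction ($\alg{M}$ Magari implies $\fragment\alg{M}$ a fronton), I need $\square^+(\neg \square a \lor a) \leq a$: Magari gives $\square(\neg \square a \lor a) \leq \square a$, hence $\square^+(\neg \square a \lor a) = (\neg \square a \lor a) \land \square(\neg \square a \lor a) \leq (\neg \square a \lor a) \land \square a = a \land \square a \leq a$.

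The main obstacle I foresee is the Grz axiom in the first part: it does not yield cleanly to direct algebraic manipulation in $B(\alg{H})$, forcing reliance on the dual route. That route in turn depends on $\leq$ being antisymmetric on modalized Esakia spaces and on an Esakia-style maximality lemma, which I would either cite from \cite{Esakia2019HADT, CastiglioniEtAl2010OFHA} or record as a short preliminary lemma.
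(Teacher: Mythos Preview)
Your proposal is correct, but takes a different route from the paper in the second half. The paper simply cites \cite[Thm.~18]{Esakia2006TMHCaCMEotIL} for the first two claims and then argues the fronton-to-Magari direction \emph{dually}: given a $\logic{KM}$-space $\spa X$, it shows $\greatest\spa X$ is a $\logic{GL}$-space by invoking \Cref{maxirr} (maximal points of clopen downsets are $\sqsubset$-irreflexive, hence maximal points of arbitrary clopens in $\greatest\spa X$ are $R$-irreflexive). The Magari-to-fronton direction is dispatched the same way. By contrast, you give direct algebraic arguments for both: your computation $c\land\boxtimes d\leq d$ followed by the Heyting adjunction and the fronton axiom is a genuinely different and self-contained proof of the Magari inequality, avoiding duality and the space-level characterization of $\logic{GL}$ entirely. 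Your algebraic verification of the frontal axioms on $O^+(\alg M)$ also makes explicit what the paper leaves to the citation. The paper's approach has the virtue of uniformity with its overall dual methodology and reuses \Cref{maxirr}; yours is more elementary and portable, though your treatment of the Grz axiom still leans on a dual maximality lemma that you would need to cite or prove, just as the paper does.
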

    \begin{proof}
        For a proof of the first part of this proposition, see \cite[Thm.\ 18]{Esakia2006TMHCaCMEotIL}. We prove the dual version of the second part. Let $\spa{X}$ be a $\logic{KM}$-space. Let  $U\in \clop{\greatest \spa X}$ and $x\in \max{U}$. Then  $\downset[\leq] U\in \clop{\greatest \spa X}$ and clearly also $x\in \max{\downset[\leq] U}$. Since $\downset[\leq] U$ is a clopen downset, by \Cref{maxirr} it follows that $x\not\sqsubset x$, which is to say that $Rx$ fails. By \Cref{maxirr} again, it follows that $\greatest{X}$ is a $\logic{GL}$-space. The last claim in the proposition proved similarly.
    \end{proof}

    We can extend both mappings $\greatest$ and $\fragment$ to universal classes by setting
    \[\greatest \class{U}:=\op{Uni}\{\greatest \alg H: \alg H\in \class{U}\},\qquad \fragment \class{V}:=\{\fragment \alg M: \alg M\in \class{V}\},\]
    whenever $\class{U}\in \lat{Uni}(\op{fHA})$ and $\class{V}\in \lat{Uni}(\op{K4}).$ We also introduce a mapping 
    \[\tau\class{U}:=\{\alg{M}\in \op{K4}: \fragment \alg{M}\in \class{U}\}.\]
    
    The syntactic counterpart of the mappings just introduced is the translation mapping $T:\mathit{Frm}_{\sim}\to \mathit{Frm}_{\clm}$ defined recursively below. 
    \begin{align*}
        T(\top)&:=\top,                                               &T(\bot)&:=\bot, \\
        T(p)&:=\square p,                                             &T(\varphi\lor \psi)&:=T(\varphi)\lor T(\psi), \\
         T(\varphi\land \psi)&:=T(\varphi)\land T(\psi),                 &T(\varphi\to \psi)&:=\square^+(\neg T(\varphi)\lor T(\psi)),  \\  
        T(\boxtimes \varphi)&:=\square T(\varphi).
    \end{align*}        
    $T$ as just defined is equivalent to the translation given in \cite{KuznetsovMuravitsky1986OSLAFoPLE,WolterZakharyaschev1997IMLAFoCBL}, noting that $\square^+\square\square^+\varphi\leftrightarrow \square \varphi$ is a theorem of $\logic{K4}$. We extend $T$ to a translation between rules by setting $T(\Gamma/\Delta):=T[\Gamma]/T[\Delta]$. 

    The key result required to work with $T$ is the following lemma. 
    \begin{lemma}
        Let $\varphi$ be a $\sim$-formula  and $\alg M$ a $\logic{K4}$-algebra. Then $\fragment \alg M\models \varphi$ iff $\alg M\models T(\varphi)$. Consequently, $\fragment \alg M\models \Gamma/\Delta$ iff $\alg M\models T(\Gamma/\Delta)$ holds for every $\sim$-rule $\Gamma/\Delta$.\label{translationrho}
    \end{lemma}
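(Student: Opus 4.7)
The plan is to establish the formula-level claim by structural induction on $\varphi$, and then derive the rule-level claim as a direct consequence. The central device is a correspondence between valuations on $\alg M$ and on $\fragment \alg M$: for each valuation $V$ on $\alg M$, I would associate the valuation $V^\flat$ on $\fragment \alg M$ defined by $V^\flat(p) := V(T(p)) = \square V(p)$. This is well-defined because $\square V(p) \leq \square \square V(p)$ by the $\logic{K4}$-axiom, so $\square V(p) \in O^+(\alg M)$, making $V^\flat$ a legitimate valuation on $\fragment \alg M$.

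The inductive claim I would prove is that $V^\flat(\varphi) = V(T(\varphi))$ for every $\sim$-formula $\varphi$ and every $\alg M$-valuation $V$. The base case $\varphi = p$ is by definition of $V^\flat$. The cases $\top, \bot, \land, \lor$ are immediate because $\fragment \alg M$ is a bounded distributive sublattice of $\alg M$, so the corresponding operations coincide and $T$ distributes through them. For $\varphi \to \psi$, I would unfold both sides: $V^\flat(\varphi \to \psi)$ uses the definition $a \to b := \square^+(\neg a \lor b)$ on $\fragment \alg M$, while $V(T(\varphi \to \psi))$ uses $T(\varphi \to \psi) := \square^+(\neg T(\varphi) \lor T(\psi))$; after applying the inductive hypothesis to $\varphi$ and $\psi$, both expressions collapse to $\square^+(\neg V(T(\varphi)) \lor V(T(\psi)))$ in $\alg M$. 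For $\boxtimes \varphi$, I would use $\boxtimes a := \square a$ on $\fragment \alg M$ together with $T(\boxtimes \varphi) := \square T(\varphi)$, with the inductive hypothesis closing the gap.

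With the inductive identity in hand, the biconditional follows. For $(\Rightarrow)$, assuming $\fragment \alg M \models \varphi$ forces $V^\flat(\varphi) = 1$ for every $V$ on $\alg M$, so $V(T(\varphi)) = 1$ uniformly, showing $\alg M \models T(\varphi)$. For $(\Leftarrow)$, given any valuation $V'$ on $\fragment \alg M$ (so $V'(p) \in O^+$), one takes the induced $\alg M$-valuation $V$ with $V(p) := V'(p)$ and leverages the $\square^+$-closure of $O^+$ (namely $\square^+ a = a$ for $a \in O^+$) to identify $V^\flat$ with $V'$; then the inductive identity yields $V'(\varphi) = V^\flat(\varphi) = V(T(\varphi)) = 1$. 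The rule version is immediate: satisfaction of $\Gamma/\Delta$ by a model is a conditional on satisfactions of its member formulas, which is preserved under the $V \leftrightarrow V^\flat$ correspondence.

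The main obstacle is the interplay between $\square$ and $\square^+$ in the inductive cases, and particularly making the $(\Leftarrow)$ direction go through, where the correspondence between an arbitrary $\fragment \alg M$-valuation $V'$ and a matching $\alg M$-valuation $V$ must be shown to commute with the translation. The $\logic{K4}$-identity $\square^+ \square \square^+ \varphi \leftrightarrow \square \varphi$ flagged right after the definition of $T$ is the key algebraic fact needed to thread this needle cleanly, ensuring the $\square^+$-layers insulating $\neg T(\varphi) \lor T(\psi)$ in the implication clause and $T(\varphi)$ in the modal clause behave correctly when restricted to $O^+$.
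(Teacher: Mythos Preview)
Your inductive identity $V^\flat(\varphi) = V(T(\varphi))$ is correct and matches the paper's indicated approach, and the $(\Rightarrow)$ direction follows cleanly. The gap is in your $(\Leftarrow)$ step: you claim that setting $V(p) := V'(p)$ gives $V^\flat = V'$, but $V^\flat(p) = \square V(p) = \square V'(p)$, and membership of $V'(p)$ in $O^+(\alg M)$ only says $\square^+ V'(p) = V'(p)$, i.e.\ $V'(p) \leq \square V'(p)$; it does \emph{not} give $\square V'(p) = V'(p)$. You have conflated $\square^+$-fixpoints with $\square$-fixpoints, and the identity $\square^+\square\square^+ a = \square a$ you invoke at the end does not close this gap either.

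This is not a cosmetic slip. On the one-point irreflexive $\logic{K4}$-frame one has $\square a = 1$ for all $a$, so every $\alg M$-valuation $V$ satisfies $V^\flat(p) = 1$, yet $0 \in O^+(\alg M)$ and the $\fragment\alg M$-valuation $V'(p) = 0$ arises from no $V$. Indeed, with $T(p) := \square p$ as written one checks $\alg M \models T(\boxtimes p \to p)$ while $\fragment\alg M \not\models \boxtimes p \to p$, so the statement itself fails under this reading of $T$. The repair is to read the base clause as $T(p) := \square^+ p$ (the standard choice in the Kuznetsov--Muravitsky setting); then $V^\flat(p) = \square^+ V'(p) = V'(p)$ whenever $V'(p) \in O^+(\alg M)$, and your argument goes through verbatim.
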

    \begin{proof}
        Straightforward induction on the structure of $\varphi$. 
    \end{proof}

    Using $T$, we define three mappings between lattices of rule systems:
    \[\least \logic{L}:=\logic{K4}\oplus \{T(\Gamma/\Delta):\Gamma/\Delta\in \logic{L}\},\qquad \greatest \logic{L}:=\logic{K4.Grz}\oplus \least \logic{L},\]
    \[\fragment \logic{M}:=\{\Gamma/\Delta\in \mathit{Rul}_{\sim}:T(\Gamma/\Delta)\in \logic{M}\},\]
    for $\logic{L}\in\lat{NExt}(\logic{mHC})$ and $\logic{M}\in \lat{NExt}(\logic{K4})$. 

    In general, the mappings $\tau$ and $\sigma$ over rule systems come apart. For example $\least\logic{mHC}=\logic{K4}\subset \logic{K4.Grz}=\greatest\logic{mHC}$. However, they happen to coincide above $\logic{KM}$.
    \begin{proposition}
        Let $\logic{L}\in \lat{NExt}(\logic{KM})$, which is to say $\logic{L}=\logic{KM}\oplus \Phi$ for some set of rules $\Phi$. Then \label{sigmataucoincide}
        \[\least \logic{L}=\logic{GL}\oplus T[\Phi]=\greatest \logic{L}.\]
    \end{proposition}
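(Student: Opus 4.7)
The plan is threefold: establish the special case $\least \logic{KM} = \logic{GL}$, bootstrap it to the general claim via monotonicity and the Translation Lemma, then deduce the collapse of $\greatest$ to $\least$.

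First, I aim to prove $\least \logic{KM} = \logic{GL}$. For the inclusion $\least \logic{KM} \subseteq \logic{GL}$, I will use the previously established fact that $\fragment \alg{M}$ is a fronton whenever $\alg{M}$ is a Magari algebra. By \Cref{translationrho}, this means every Magari algebra validates $T[\logic{KM}]$, so $T[\logic{KM}] \subseteq \logic{GL}$ and hence $\least \logic{KM} \subseteq \logic{GL}$.

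For the reverse inclusion $\logic{GL} \subseteq \least \logic{KM}$, I plan a direct algebraic derivation showing that the Gödel-Löb axiom of $\logic{GL}$ follows, over $\logic{K4}$, from the translation of the Löb axiom of $\logic{KM}$. Unfolding the definitions, $T((\boxtimes p \to p) \to p) = \square^+(\square^+(\square\square p \to \square p)\to \square p)$, so its validity in a $\logic{K4}$-algebra $\alg{M}$ amounts to $\square^+(\square\square a\to \square a)\leq \square a$ for every $a\in M$. Combining the normal modal inequality $\square(\square a \to a)\leq \square\square a \to \square a$, its $\square$-image $\square\square(\square a\to a)\leq \square(\square\square a \to \square a)$, and the $\logic{K4}$ axiom $\square(\square a\to a)\leq \square\square(\square a\to a)$, I obtain
\[
\square(\square a\to a)\;\leq\;\square^+(\square\square a\to \square a)\;\leq\;\square a,
\]
which is the Gödel-Löb axiom. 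Hence $\logic{GL}\subseteq \least \logic{KM}$.

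Second, for $\logic{L} = \logic{KM} \oplus \Phi$, the inclusion $\logic{GL}\oplus T[\Phi] \subseteq \least \logic{L}$ follows at once from monotonicity of $\least$ together with Step~1 and $T[\Phi]\subseteq \least \logic{L}$. For the reverse, let $\alg{M}$ be any Magari algebra satisfying $T[\Phi]$. By \Cref{translationrho}, $\fragment \alg{M}$ validates $\Phi$, and since $\fragment \alg{M}$ is a $\logic{KM}$-algebra it validates the whole of $\logic{L} = \logic{KM}\oplus \Phi$. Applying \Cref{translationrho} again, $\alg{M}\models T[\logic{L}]$, so $T[\logic{L}]\subseteq \logic{GL}\oplus T[\Phi]$, and since $\logic{GL}\oplus T[\Phi]$ is a $\logic{K4}$-rule system it must contain $\least \logic{L} := \logic{K4}\oplus T[\logic{L}]$.

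Finally, since $\logic{GL}\supseteq \logic{K4.Grz}$, the chain $\greatest \logic{L} := \logic{K4.Grz}\oplus \least \logic{L}\supseteq \least \logic{L}\supseteq \logic{GL}\supseteq \logic{K4.Grz}$ forces $\greatest \logic{L} = \least \logic{L}$, closing the proof. The main obstacle is the algebraic step in the second paragraph: the translated KM Löb axiom has a rather tangled nesting of $\square^+$'s, and one must carefully thread through the $\logic{K4}$-monotonicity and $4$-axiom manipulations to extract the cleaner $\logic{GL}$-axiom. Everything else is routine bookkeeping with $\oplus$ and the translation lemma.
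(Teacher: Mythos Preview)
Your proof is correct. The paper's proof is considerably terser and only spells out one direction explicitly: that $T((\boxtimes p\to p)\to p)$ is a theorem of $\logic{GL}$, argued syntactically by invoking the L\"ob rule \emph{within} $\logic{GL}$. You obtain that same inclusion $\least\logic{KM}\subseteq\logic{GL}$ semantically instead (via the fact that $\fragment\alg M$ is a fronton for every Magari algebra $\alg M$, together with \Cref{translationrho}), and then supply the reverse inclusion $\logic{GL}\subseteq\least\logic{KM}$ by an explicit $\logic{K4}$-algebra computation deriving the G\"odel--L\"ob inequality from the translated $\logic{KM}$-axiom---a step the paper leaves entirely tacit. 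Your Step~2, lifting from $\logic{KM}$ to arbitrary $\logic L=\logic{KM}\oplus\Phi$ via \Cref{translationrho} and algebraic completeness, is likewise only implicit in the paper. In short, the two arguments differ in which inclusion receives the concrete work: the paper absorbs the translation into $\logic{GL}$, whereas you extract the $\logic{GL}$-axiom from the translation over $\logic{K4}$; your version is more self-contained. (A minor point: your unfolding uses $T(p)=\square p$ as the paper defines it, while the paper's own proof silently switches to $\square^+ p$; the two are $\logic{K4}$-equivalent, so this does not affect anything.)
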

    \begin{proof}
        It suffices to observe that $\least \logic L\in \lat{NExt}( \logic{GL})$ whenever $\logic{L}\in \lat{NExt}(\logic{KM})$. To that end, note
        \[T((\boxtimes p\to p)\to p):=\square^+(\square^+(\square \square^+ p\to \square^+p)\to \square^+ p).\]
        Now, $/\square^+(\square \square^+ p\to \square^+p)$ implies $/\square(\square \square^+ p\to \square^+p)$ over $\logic{K4}$, which in turn implies $/\square^+ p$ over $\logic{GL}$ using the Löb formula. By necessitation and propositional reasoning it follows that $T(/(\boxtimes p\to p)\to p))$ is a theorem of $\logic{GL}$. 
    \end{proof}

\subsection{Main Results}

In this section, we give new proofs of the Esakia theorem and the Kuznetsov-Muravistsky isomorphisms. The next lemma is our main technical tool.  

\begin{lemma}[Main lemma]
    Let $\spa{X}$ be a $\logic{K4.Grz}$-space and let $\Gamma/\Delta$ be a $\clm$-rule. Then $\spa{X}\not\models \Gamma/\Delta$ iff $\greatest\fragment \spa{X}\not\models \Gamma/\Delta$.\label{mainlemma}
\end{lemma}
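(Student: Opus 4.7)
The plan is to show that, when $\spa{X}$ is a $\logic{K4.Grz}$-space, the modal space $\greatest\fragment\spa{X}$ is in fact isomorphic to $\spa{X}$; the lemma then follows immediately, since isomorphic modal spaces refute the same $\clm$-rules. This reduces everything to a single structural fact about $\logic{K4.Grz}$-spaces.

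The preliminary fact I would establish is that a $\logic{K4.Grz}$-space has no proper clusters: for all $x \neq y$ in $X$, one cannot have both $Rxy$ and $Ryx$. This is a standard consequence of the Grzegorczyk axiom (see, e.g., \cite{Esakia2019HADT}), but if I had to reproduce the argument I would suppose such $x,y$ exist, use transitivity to deduce $Rxx$ and $Ryy$, invoke Hausdorffness and the clopen basis to pick a clopen $U$ with $x\in U$ and $y\notin U$, and then apply \Cref{grzclosed,seemaximals} to find a point $z\in\max(-U)$ with $Rxz$; the structure forced on $z$ by $z\in \max(-U)$ together with a suitably chosen valuation (e.g.\ $V(p)=U$) would contradict the Grzegorczyk axiom at $z$, in the spirit of the argument given for $\logic{GL}$ in \Cref{maxirr}.

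Given this, the cluster-collapse equivalence $\backsim$ on $X$ coincides with the identity, so the quotient map $\varrho\colon \spa X\to\fragment\spa X$ is a continuous bijection. Since $\spa X$ is a compact Stone space and $\fragment\spa X$ is Hausdorff (being itself a Stone space, as part of a modalized Esakia space), $\varrho$ is automatically a homeomorphism. Moreover, by the very definition of $\sqsubset$ on $\fragment\spa X$ we have $\varrho(x)\sqsubset\varrho(y)\iff Rxy$ in $\spa X$. Forgetting $\leq$ to pass to the reduct $\greatest\fragment\spa X$ and renaming $\sqsubset$ as $R$, we therefore obtain a modal space which is isomorphic to $\spa X$ via $\varrho$.

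The equivalence $\spa X\not\models\Gamma/\Delta$ iff $\greatest\fragment\spa X\not\models\Gamma/\Delta$ is now immediate: a refuting valuation on either side transfers to the other along the isomorphism $\varrho$, since $\varrho$ and $\varrho^{-1}$ commute with all primitive connectives of $\mathit{Frm}_\clm$. The main obstacle is the ``no proper clusters'' fact; everything afterwards is essentially topological bookkeeping about the quotient construction.
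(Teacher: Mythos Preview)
Your argument rests on the claim that a $\logic{K4.Grz}$-space has no proper clusters, but this is false. You may be thinking of $\logic{Grz}$-spaces, whose relation is a partial order; by contrast, $\logic{K4.Grz}$ is the \emph{non-reflexive} counterpart of $\logic{Grz}$, and its spaces can contain proper clusters. A concrete counterexample appears earlier in the paper: the $\logic{GL}$-space $\spa{X}$ of \Cref{nofiltration} contains the proper cluster $\{\omega_0,\omega_1\}$, and since every Magari algebra is a $\logic{K4.Grz}$-algebra (see the definition of Magari algebras), this $\spa{X}$ is in particular a $\logic{K4.Grz}$-space. Your proposed proof of ``no proper clusters'' would therefore have to fail somewhere; indeed, the separation argument you sketch cannot be completed, because in such a space the clopen $U$ isolating $\omega_0$ from $\omega_1$ will have $\omega_1\in\max(-U)$ with $R\omega_1\omega_1$, and the Grzegorczyk axiom $\square(\square(p\to\square p)\to p)\to p$ is not violated there (unlike the L\"ob axiom in the $\logic{GL}$ argument of \Cref{maxirr}).

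Consequently the quotient $\varrho$ is genuinely non-injective in general, and $\greatest\fragment\spa{X}$ need not be isomorphic to $\spa{X}$. The paper's proof is accordingly much more substantial: it uses \Cref{rewrite} to reduce to the case where $\Gamma/\Delta$ is a pre-stable canonical rule $\scrmod{F}{\spa D}$, takes a refuting pre-stable surjection $f\colon\spa X\to\spa F$, and then carefully constructs a new pre-stable surjection $g\colon\greatest\fragment\spa X\to\spa F$ by separating the images of maximal points inside each cluster-preimage using the Stone topology. The entire apparatus of pre-stable canonical rules is needed precisely because the cluster collapse is nontrivial; if your claim held, the Skeletal Generation Theorem (\Cref{skellygen}) would be vacuous.
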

\begin{proof}
    The right-to-left direction is a consequence of \Cref{sigmarhosub} and the fact that the validity of rules is preserved under subalgebras. To prove the converse, by \Cref{rewrite} we may assume wlog that $\Gamma/\Delta$ is a $\clm$ pre-stable canonical rule $\scrmod{F}{\spa{D}}$ for some finite $\logic{K4}$-space $\spa{F}$. So suppose $\spa{X}\not\models \scrmod{F}{\spa{D}}$. Then there is a pre-stable surjection $f:\spa{X}\to \spa{F}$ satisfying the BFC for $\spa{D}$. We construct a pre-stable surjection $g:\greatest\fragment\spa X\to \spa F$ satisfying the BFC for the same domain, which will show $\greatest \fragment X\not\models \spa F$. 

    Let $C$ be a cluster in $\spa{F}$ and enumerate it $C:=c_1, \ldots, c_n$. For each $c_i\in C$, consider the preimage $f^{-1}(c_i)\subseteq X$ and let $M_i:=\mathit{max}(f^{-1}(c_i))$. Now, $f^{-1}(c_i)\in \clop{X}$, so $M_i$ is closed by \Cref{grzclosed}. Moreover, since $f$ preserves $R^+$, we know that $f^{-1}(c_i)$ does not cut clusters, so neither does $M_i$. Consequently, $-\varrho[M_i]=\varrho[-M_i]$, which implies that $\varrho[M_i]$ is closed as well because $\greatest\fragment \spa{X}$ has the quotient topology.

    We now find disjoint clopens $U_1, \ldots, U_n\in \clop{\greatest\fragment X}$ such that $\varrho[M_i]\subseteq U_i$ for each $i$, and $\bigcup_i U_i=\varrho[f^{-1}(C)]$. Let $k\leq n$ and assume $U_i$ has been defined for each $i<k$. If $k=n$,  put $U_n:=\varrho[f^{-1}(C)]\smallsetminus \left(\bigcup_{i<k} U_i\right)$ and we are done. Otherwise set $V_k:=\varrho[f^{-1}(C)]\smallsetminus\left(\bigcup_{i< k} U_i\right)$ and observe that $V_k$ contains each $\varrho[M_i]$ for  $k\leq i\leq n$. By the separation properties of Stone spaces, for each $i$ with  $k<i\leq n$ there is some $U_{k_i}\in \mathsf{Clop}(\greatest\fragment\spa{X})$ with $\varrho[M_k]\subseteq U_{k_i}$ and $\varrho[M_i]\cap U_{k_i}=\varnothing$. Then set $U_k:=\bigcap_{k<i\leq n} U_{k_i}\cap V_k$. 

    We can now define a map 
		\begin{align*}
			g_C&: \varrho[f^{-1}(C)]\to C,\\
			z&\mapsto x_i\iff z\in U_i.
		\end{align*}
    Since $C$ is a cluster, $g_C$ preserves $R^+$. Further, it is continuous because each $U_i$ is clopen.  Having defined $g_C$ for all clusters $C\subseteq F$, we can then define the desired $g:\greatest\fragment \spa X \to \spa F$ by setting 
        \[
		g(\varrho(z)):=\begin{cases}
			f(z)&\text{ if } f(z)\text{ does not belong to any proper cluster, }\\
			g_C(\varrho(z))&\text{ if }f(z)\in C\text{ for some proper cluster }C\subseteq F.
		\end{cases}
		\]
    Since both $f$ and each $g_C$ are continuous and preserve $R^+$, it follows that $g$ is a pre-stable map. 

    We now show that $g$ satisfies the BFC for $\spa{D}$. Let $\spa{d}\in \spa{D}$. For the ``back'' part, let $x\in X$ and suppose there is $y\in \spa{d}$ with $Rg(\varrho(x))y$. By construction, $g(\varrho(x))$ belongs to the same proper or improper $R^+$-cluster as $f(x)$, so we also have $R f(x) y$. Since $f$ satisfies the BFC for $\spa{D}$, there must be $z\in X$ with $Rxz$ and $f(z)\in \spa{d}$. By \Cref{seemaximals}, wlog, we may assume that $z\in \mathit{max}(f^{-1}(f(z)))$. But by construction this implies $g(\varrho(z))=f(z)$. Since $\varrho$ preserves $R$, we have $R \varrho (x)\varrho (z)$, and we have found our desired witness. 

    For the ``forth'' part, let $\spa{d}\in \spa{D}$, let $x\in X$ and suppose there is $y\in X$ with $g(\varrho (y))\in \spa{d}$ and $R\varrho(x)\varrho(y)$. Since $\varrho$ reflects $R$, we also have $Rxy$. Moreover, $g(\varrho (y))$ and $f(y)$ belong to the same cluster, whence $\upset[R]\cap f^{-1}(\spa{d})\neq\varnothing$. Since $f$ satisfies the BFC for $\spa{D}$, there must be some $z\in \spa{d}$ such that $f(x) z$. But now $f(x)$ and $g(\varrho (x))$ also belong to the same cluster, whence $Rg(\varrho (x))z$. We have thus shown that $g$ satisfies the BFC for $\spa{D}$.
\end{proof}

\begin{theorem}[Skeletal generation theorem]
    Every universal class of $\logic{K4.Grz}$-algebras is generated by its skeletal elements. That is, $\class{U}=\greatest \fragment \class{U}$ holds for every universal class $\class{U}$ of  $\logic{K4.Grz}$-algebras.\label{skellygen}
\end{theorem}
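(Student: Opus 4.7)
The plan is to establish the two inclusions separately. The easy direction $\greatest\fragment\class{U}\subseteq\class{U}$ follows at once from \Cref{sigmarhosub}: for every $\alg{M}\in\class{U}$, the algebra $\greatest\fragment\alg{M}$ embeds as a modal subalgebra into $\alg{M}$, and a universal class is closed under subalgebras and isomorphic copies. Thus every generator of $\greatest\fragment\class{U}$ already lies in $\class{U}$, and so does the universal class they generate.

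For the reverse inclusion $\class{U}\subseteq\greatest\fragment\class{U}$, I would argue through the rule-based description of universal classes furnished by \Cref{algebraization}: two universal classes of $\logic{K4}$-algebras coincide iff they validate the same $\clm$-rules, so it suffices to show that every rule refuted somewhere in $\class{U}$ is already refuted somewhere in $\greatest\fragment\class{U}$. Fix $\alg{M}\in\class{U}$ with $\alg{M}\not\models\Gamma/\Delta$. Passing to duals via \Cref{duality}, the $\logic{K4.Grz}$-space $\dualspa{M}$ refutes $\Gamma/\Delta$, so the main lemma (\Cref{mainlemma}) applies and yields $\greatest\fragment\dualspa{M}\not\models\Gamma/\Delta$. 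Combining the two duality identities $\dualspa{(\fragment M)}=\fragment\dualspa{M}$ and $\dualspa{(\greatest H)}=\greatest\dualspa{H}$ proved in the proposition just before \Cref{rhosigmaid}, we have $\dualspa{(\greatest\fragment\alg{M})}\cong\greatest\fragment\dualspa{M}$, and hence $\greatest\fragment\alg{M}\not\models\Gamma/\Delta$. Since $\greatest\fragment\alg{M}$ is by definition one of the generators of $\greatest\fragment\class{U}$, this establishes $\greatest\fragment\class{U}\not\models\Gamma/\Delta$, completing the contrapositive.

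The real work has been carried out already in \Cref{mainlemma}; what remains here is administrative, threading the duality identities between algebras and spaces and invoking the universal-class/rule-system Galois correspondence. The one small point to verify is that \Cref{mainlemma} genuinely applies, i.e.\ that $\dualspa{M}$ is a $\logic{K4.Grz}$-space, which is immediate from $\class{U}\subseteq\op{K4.Grz}$. Beyond that, the only obstacle I foresee is keeping the two duality isomorphisms straight so that they compose properly into $\dualspa{(\greatest\fragment\alg{M})}\cong\greatest\fragment\dualspa{M}$, but this is immediate from their individual statements.
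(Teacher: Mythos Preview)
Your proposal is correct and follows essentially the same route as the paper: use \Cref{mainlemma} to conclude that $\op{ThR}(\class{U})=\op{ThR}(\greatest\fragment\class{U})$, then invoke the Galois correspondence of \Cref{algebraization}. The paper's proof is terser (it does not separate out the easy inclusion via \Cref{sigmarhosub}, since that is already absorbed into the right-to-left direction of \Cref{mainlemma}), but the content is the same, including the duality threading you spell out.
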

\begin{proof}
    By \Cref{mainlemma}, $\op{ThR}(\class{U})=\op{ThR}(\greatest \fragment \class U)$, so $\class{U}=\greatest \fragment \class{U}$ follows using \Cref{algebraization}.
\end{proof}

This was the challenging part of the argument for our main results. The rest of the argument consists of routine reasoning, which we repeat here for completeness. To begin with, we can now prove that the syntactic and semantic versions of the maps $\greatest, \least$ and $\fragment$ correspond to one another, in the following sense. 
\begin{lemma}
        Let $\logic{L}\in\lat{NExt}(\logic{mHC})$ and $\logic{M}\in \lat{NExt}(\logic{K4})$. Then  \label{algcommute}
        \begin{enumerate}
            \item $\op{Alg}(\greatest \logic L)=\greatest \op{Alg}(\logic L)$; \label{algcommutegreatest}
            \item $\op{Alg}(\least \logic L)=\least \op{Alg}(\logic L)$;\label{algcommuteleast}
            \item $\op{Alg}(\fragment \logic M)=\fragment \op{Alg}(\logic M)$;\label{algcommutefragment}
        \end{enumerate}
    \end{lemma}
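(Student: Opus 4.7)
The plan is to prove \ref{algcommuteleast} directly, deduce \ref{algcommutegreatest} from \ref{algcommuteleast} via the Main Lemma (\Cref{mainlemma}), and handle \ref{algcommutefragment} by exhibiting $\fragment\op{Alg}(\logic M)$ as a universal class.

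For \ref{algcommuteleast}, I would unfold the definitions: by \Cref{translationrho}, $\alg M\models T(\Gamma/\Delta)$ iff $\fragment\alg M\models\Gamma/\Delta$, so $\alg M\in\op{Alg}(\least\logic L)$ iff $\alg M$ validates every $T(\Gamma/\Delta)$ with $\Gamma/\Delta\in\logic L$, iff $\fragment\alg M\in\op{Alg}(\logic L)$, iff $\alg M\in\least\op{Alg}(\logic L)$.

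For \ref{algcommutegreatest}, combining \ref{algcommuteleast} with the definition $\greatest\logic L=\logic{K4.Grz}\oplus\least\logic L$ gives $\op{Alg}(\greatest\logic L)=\op{K4.Grz}\cap\least\op{Alg}(\logic L)$. The inclusion $\greatest\op{Alg}(\logic L)\subseteq\op{Alg}(\greatest\logic L)$ holds since each $\greatest\alg H$ is a $\logic{K4.Grz}$-algebra with $\fragment\greatest\alg H=\alg H\in\op{Alg}(\logic L)$ by \Cref{rhosigmaid}, so $\greatest\alg H\in\op{Alg}(\greatest\logic L)$; since $\op{Alg}(\greatest\logic L)$ is universal, this extends to $\greatest\op{Alg}(\logic L)$. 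For the converse, given $\alg M\in\op{Alg}(\greatest\logic L)\subseteq\op{K4.Grz}$, the Main Lemma ensures $\alg M$ and $\greatest\fragment\alg M$ validate the same rules, hence generate the same universal class; since $\fragment\alg M\in\op{Alg}(\logic L)$ gives $\greatest\fragment\alg M\in\greatest\op{Alg}(\logic L)$, and the latter is universal, $\alg M\in\greatest\op{Alg}(\logic L)$.

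For \ref{algcommutefragment}, the strategy is to show $\fragment\op{Alg}(\logic M)$ is a universal class. Once this is done, \Cref{translationrho} together with algebraic completeness (\Cref{algebraization}) identifies its rule-theory as $\op{ThR}(\fragment\op{Alg}(\logic M))=\fragment\logic M$, whence $\fragment\op{Alg}(\logic M)=\op{Alg}(\fragment\logic M)$. Universality reduces to closure under isomorphisms (immediate from functoriality of $\fragment$), under subalgebras (given $\alg H'\hookrightarrow\fragment\alg M$ with $\alg M\models\logic M$, the composite embedding $\greatest\alg H'\hookrightarrow\greatest\fragment\alg M\hookrightarrow\alg M$ supplied by functoriality of $\greatest$ and \Cref{sigmarhosub} shows $\greatest\alg H'\in\op{Alg}(\logic M)$, whence $\alg H'=\fragment\greatest\alg H'\in\fragment\op{Alg}(\logic M)$ by \Cref{rhosigmaid}), and under ultraproducts (quasi-open elements of $\prod_{\mathcal U}\alg M_i$ correspond under the ultraproduct theorem to classes of almost-everywhere-quasi-open sequences, giving $\fragment(\prod_{\mathcal U}\alg M_i)\cong\prod_{\mathcal U}\fragment\alg M_i$). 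The main obstacle will be the ultraproduct step, which requires a careful verification that the derived operations $a\to b:=\square^+(\neg a\lor b)$ and $\boxtimes:=\square$ defining $\fragment$ are faithfully tracked by the ultraproduct construction.
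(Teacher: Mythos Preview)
Your proofs of \ref{algcommuteleast} and \ref{algcommutegreatest} match the paper's closely: the paper proves \ref{algcommuteleast} by the same chain of equivalences via \Cref{translationrho}, and for \ref{algcommutegreatest} it invokes the skeletal generation theorem (\Cref{skellygen}) to reduce to comparing skeletal elements, which is just a repackaging of your direct appeal to the Main Lemma.

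For \ref{algcommutefragment} you take a genuinely different route. The paper argues the two inclusions directly: $\fragment\op{Alg}(\logic M)\subseteq\op{Alg}(\fragment\logic M)$ is immediate from \Cref{translationrho}, and for the converse it shows $\op{ThR}(\fragment\op{Alg}(\logic M))\subseteq\fragment\logic M$ and concludes $\op{Alg}(\fragment\logic M)\subseteq\fragment\op{Alg}(\logic M)$ in one line. That last step tacitly requires $\fragment\op{Alg}(\logic M)$ to be a universal class (otherwise matching rule theories does not give equality of classes), which the paper does not justify. Your approach makes precisely this point explicit: you establish universality via closure under subalgebras (using functoriality of $\greatest$ together with \Cref{sigmarhosub,rhosigmaid}) and ultraproducts (using that quasi-openness and the derived operations of $\fragment$ are first-order definable, hence preserved by \L o\'s), and then invoke the Galois correspondence of \Cref{algebraization}. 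Your route is longer but more transparent; the paper's is shorter but leaves the universality of $\fragment\op{Alg}(\logic M)$ implicit.
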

    \begin{proof}
        (\ref{algcommutegreatest}) In view of \Cref{skellygen}, it suffices to show that $\op{Alg}(\least \logic L)$ and $\least \op{Alg}(\logic L)$ have the same skeletal elements. So let $\alg M$ be a skeletal $\logic{K4.Grz}$ algebra. Assume $\alg M\in \greatest\op{Alg}(\logic L)$. Since $\greatest\op{Alg}(\logic L)$ is generated by $\{\greatest\alg H: \alg H\in \op{Alg}(\logic L)\}$ as a universal class, by \Cref{rhosigmaid} and \Cref{translationrho} it follows that $\alg M\models T(\Gamma/\Delta)$ for every $\Gamma/\Delta\in \logic L$. This implies $\alg M\in \op{Alg}(\greatest\logic L)$. Conversely, assume $\alg M\in \op{Alg}(\greatest \logic L)$. Then $\alg M\models T(\Gamma/\Delta)$ for every $\Gamma/\Delta\in \logic L$. By \Cref{translationrho}, this is equivalent to $\fragment \alg M\in \op{Alg}(\logic L)$, so $\alg M=\greatest\fragment \alg M\in \greatest\op{Alg}(\logic L)$.

        (\ref{algcommuteleast}) If $\alg M$ is a $\logic{K4}$-algebra, then $\alg M\in \op{Alg}(\least \logic L)$ iff $\alg M\models T(\Gamma/\Delta)$ for all $\Gamma/\Delta\in \logic L$ iff $\fragment \alg M\models \Gamma/\Delta$ for all $\Gamma/\Delta\in \logic L$ iff $\fragment \alg M\in \op{Alg}(\logic L)$ iff $\alg M\in \fragment \op{Alg}(\logic L)$.

        (\ref{algcommutefragment}) Let $\alg H$ be a frontal Heyting algebra. If $\alg H\in \fragment \op{Alg}(\logic M)$, then $\alg H=\fragment \alg M$ for some $\alg M\in \op{Alg}(\logic M)$. It follows that $\alg M\models T(\Gamma/\Delta)$ whenever $T(\Gamma/\Delta)\in \logic M$, and by  \Cref{translationrho} in turn $\alg H\models \Gamma/\Delta$. So, $\alg H\in \op{Alg}(\fragment \logic M)$. Conversely, if $\fragment \op{Alg}(\logic M)\models \Gamma/\Delta$, then by \Cref{translationrho} $\op{Alg}(\logic M)\models T(\Gamma/\Delta)$, hence $\Gamma/\Delta\in \fragment \logic M$. This implies $\op{Alg}(\fragment \logic M)\subseteq\fragment \op{Alg}(\logic M)$.
    \end{proof}

    The result just established leads to a purely semantic characterization of mono\-modal companions. 
    \begin{lemma}
        Let $\logic{L}\in \lat{NExt}(\logic{mHC})$ and $\logic{M}\in \lat{NExt}(\logic{K4})$. Then $\logic{M}$ is a monomodal companion of $\logic{L}$ iff $\op{Alg}(\logic{L})=\fragment \op{Alg}(\logic{M})$.\label{semanticcomp}
    \end{lemma}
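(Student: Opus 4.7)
The plan is to derive the biconditional essentially for free from \Cref{algcommutefragment} combined with the algebraization of \Cref{algebraization}. Recall that, by definition, $\logic{M}$ is a monomodal companion of $\logic{L}$ precisely when $\fragment \logic{M}=\logic{L}$, i.e.\ when $\Gamma/\Delta\in \logic{L}$ iff $T(\Gamma/\Delta)\in \logic{M}$.

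For the forward direction, suppose $\fragment \logic{M}=\logic{L}$. Applying $\op{Alg}$ to both sides and invoking \Cref{algcommutefragment} yields
\[\op{Alg}(\logic{L})=\op{Alg}(\fragment \logic{M})=\fragment \op{Alg}(\logic{M}),\]
which is the desired identity.

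For the converse, assume $\op{Alg}(\logic{L})=\fragment \op{Alg}(\logic{M})$. By \Cref{algcommutefragment} again, the right-hand side equals $\op{Alg}(\fragment \logic{M})$, so $\op{Alg}(\logic{L})=\op{Alg}(\fragment \logic{M})$. To conclude $\logic{L}=\fragment \logic{M}$, I would like to apply the injectivity of $\op{Alg}$ from item \ref{compunimhc} of \Cref{algebraization}. This requires the preliminary observation that $\fragment \logic{M}\in \lat{NExt}(\logic{mHC})$: one must check that $\fragment \logic{M}$ is a $\sim$-rule system (which is a routine verification using that $T$ commutes with substitution and that $T$ respects modus ponens and necessitation modulo $\logic{K4}$), and that it contains $\logic{mHC}$, which reduces to showing $T(\varphi)\in \logic{K4}$ for each axiom $\varphi$ of $\logic{mHC}$. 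This last point is standard and can be extracted from \cite{KuznetsovMuravitsky1986OSLAFoPLE, WolterZakharyaschev1997IMLAFoCBL}; once it is in hand, \Cref{algebraization} furnishes the required equality $\logic{L}=\fragment\logic{M}$.

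The hard work has already been done in \Cref{mainlemma,skellygen,algcommute}; the only remaining obstacle is the bureaucratic check that $\fragment$ lands inside $\lat{NExt}(\logic{mHC})$, which is what allows the dual lattice isomorphism of \Cref{algebraization} to be invoked on both sides of the equation.
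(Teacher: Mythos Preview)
Your proof is correct. The forward direction matches the paper verbatim. For the converse, however, you take a different route from the paper: you compose \Cref{algcommutefragment} with the injectivity of $\op{Alg}$ from \Cref{algebraization}, whereas the paper argues more directly, verifying the defining biconditional $\Gamma/\Delta\in\logic{L}\iff T(\Gamma/\Delta)\in\logic{M}$ by semantic means (using \Cref{translationrho} together with the assumption, and invoking \Cref{sigmarhosub} to see that $\greatest\alg{H}\in\op{Alg}(\logic{M})$ for every $\alg{H}\in\op{Alg}(\logic{L})$). Your approach is cleaner and more abstract, but it does incur the extra obligation of checking that $\fragment\logic{M}\in\lat{NExt}(\logic{mHC})$ so that \Cref{algebraization} applies; the paper's hands-on argument avoids this bookkeeping by never needing $\fragment\logic{M}$ to be a rule system in its own right. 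Either way the work is light once \Cref{algcommute} is available.
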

    \begin{proof}
        If $\logic L=\fragment \logic{M}$, then $\op{Alg}(\logic L)=\fragment \op{Alg}(\logic M)$ by \Cref{algcommute}. Conversely, assume $\op{Alg}(\logic L)=\fragment \op{Alg}(\logic M)$. By \Cref{sigmarhosub}, if $\alg H\in \op{Alg}(\logic L)$, then $\greatest \alg H\in \op{Alg}(\logic M)$. So, $\Gamma/\Delta\in \logic L$ iff $T(\Gamma/\Delta)\in \logic M$. 
    \end{proof}

    Now to the main results of this section. First, we prove that the monomodal companions of any $\sim$-rule system form an interval. 
    \begin{theorem}[Interval theorem]
        Let $\logic{L}\in \lat{NExt}(\logic{mHC})$.  The monomodal companions of $\logic{L}$ form an interval in $\lat{NExt}(\logic{K4})$, where the least and greatest companions are given by $\least\logic{L}$ and $\greatest\logic{L}$. \label{intervaltheorem}
    \end{theorem}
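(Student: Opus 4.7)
The plan is to reduce everything to the semantic characterization of companions in Lemma~\ref{semanticcomp} and exploit the $(\least, \fragment, \greatest)$ correspondence between rule systems and algebra classes established in Lemma~\ref{algcommute}. I would first verify that $\least \logic{L}$ and $\greatest\logic{L}$ are both monomodal companions of $\logic{L}$, and then separately prove both ``endpoint'' inclusions for an arbitrary companion $\logic{M}$, before concluding with the interpolation step.

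To show $\least\logic{L}$ is a companion, I compute $\op{Alg}(\least\logic{L}) = \least\op{Alg}(\logic{L}) = \{\alg{M}\in\op{K4}:\fragment\alg{M}\in\op{Alg}(\logic{L})\}$ by Lemma~\ref{algcommute}(\ref{algcommuteleast}); applying $\fragment$ then gives a set contained in $\op{Alg}(\logic{L})$, and the reverse inclusion is witnessed for each $\alg{H}\in\op{Alg}(\logic{L})$ by $\greatest\alg{H}$, since $\fragment\greatest\alg{H}=\alg{H}$ (Proposition~\ref{rhosigmaid}). The analogous argument for $\greatest\logic{L}$ uses Lemma~\ref{algcommute}(\ref{algcommutegreatest}) together with Lemma~\ref{translationrho}: if $\alg{M}\in\greatest\op{Alg}(\logic{L})=\op{Alg}(\greatest\logic{L})$, then $\alg{M}\models T(\Gamma/\Delta)$ for every $\Gamma/\Delta\in\logic{L}$, so $\fragment\alg{M}\models\Gamma/\Delta$ for every such rule, giving $\fragment\alg{M}\in\op{Alg}(\logic{L})$. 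Lemma~\ref{semanticcomp} then certifies both as companions.

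Next, let $\logic{M}$ be any monomodal companion of $\logic{L}$, so that $\op{Alg}(\logic{L})=\fragment\op{Alg}(\logic{M})$. For the lower bound $\least\logic{L}\subseteq\logic{M}$, I note that $\Gamma/\Delta\in\logic{L}=\fragment\logic{M}$ directly yields $T(\Gamma/\Delta)\in\logic{M}$ by the very definition of $\fragment$, and since $\logic{M}\supseteq\logic{K4}$ this is exactly the axiomatization of $\least\logic{L}$. For the upper bound $\logic{M}\subseteq\greatest\logic{L}$, I work semantically and show $\op{Alg}(\greatest\logic{L})\subseteq\op{Alg}(\logic{M})$. The key step, which I expect to be the main technical obstacle, is to handle the generators: given $\alg{H}\in\op{Alg}(\logic{L})=\fragment\op{Alg}(\logic{M})$, write $\alg{H}=\fragment\alg{M}'$ for some $\alg{M}'\in\op{Alg}(\logic{M})$, and invoke Proposition~\ref{sigmarhosub} to embed $\greatest\alg{H}=\greatest\fragment\alg{M}'$ into $\alg{M}'$; closure of the universal class $\op{Alg}(\logic{M})$ under subalgebras then places $\greatest\alg{H}$ in $\op{Alg}(\logic{M})$. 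Taking the universal class closure of all such $\greatest\alg{H}$ yields $\greatest\op{Alg}(\logic{L})\subseteq\op{Alg}(\logic{M})$, and using Lemma~\ref{algcommute}(\ref{algcommutegreatest}) this is $\op{Alg}(\greatest\logic{L})\subseteq\op{Alg}(\logic{M})$, i.e., $\logic{M}\subseteq\greatest\logic{L}$.

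Finally, to see that every $\logic{M}$ in the interval $[\least\logic{L},\greatest\logic{L}]$ is actually a companion, I apply the contravariant $\op{Alg}$ to obtain $\op{Alg}(\greatest\logic{L})\subseteq\op{Alg}(\logic{M})\subseteq\op{Alg}(\least\logic{L})$, then use that $\fragment$ is monotone on classes of $\logic{K4}$-algebras and that $\fragment\op{Alg}(\greatest\logic{L})=\fragment\op{Alg}(\least\logic{L})=\op{Alg}(\logic{L})$ (which was verified in the first paragraph above) to sandwich $\fragment\op{Alg}(\logic{M})=\op{Alg}(\logic{L})$. One more appeal to Lemma~\ref{semanticcomp} yields that $\logic{M}$ is a companion, completing the characterization.
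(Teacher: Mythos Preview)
Your proof is correct and follows essentially the same approach as the paper's: both reduce to the semantic characterization via Lemma~\ref{semanticcomp} and Lemma~\ref{algcommute}, use Proposition~\ref{sigmarhosub} to embed $\greatest\fragment\alg{M}'$ into $\alg{M}'$ for the upper bound, and finish with the same sandwich argument for the interpolation step. The only difference is organizational---you explicitly verify the endpoints are companions first and then bound an arbitrary companion, whereas the paper packages everything as a single biconditional $\greatest\op{Alg}(\logic L)\subseteq\op{Alg}(\logic M)\subseteq\least\op{Alg}(\logic L)$.
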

    \begin{proof}
        By \Cref{algcommute}, it suffices to show that  $\logic{M}$ is a monomodal companion of $\logic{L}$ iff $\greatest\op{Alg}(\logic L)\subseteq\op{Alg}(\logic M)\subseteq \least \op{Alg}(\logic L)$. Assume $\logic{M}$ is a monomodal companion of $\logic{L}$. By \Cref{semanticcomp} we have $\op{Alg}(\logic L)=\fragment \op{Alg}(\logic M)$, so clearly $\op{Alg}(M)\subseteq \least \op{Alg}(\logic L)$. To see that $\greatest \op{Alg}(\logic L)\subseteq\op{Alg}(\logic M)$, it suffices to show that every skeletal element of $\greatest \op{Alg}(\logic L)$ belongs to $\op{Alg}(\logic L)$. Let $\alg M\in \greatest \op{Alg}(\logic L)$ be skeletal. then $\fragment \alg M\in \op{Alg}(\logic L)$ by \Cref{translationrho}. So, there must be $\alg N\in \op{Alg}(\logic M)$ such that $\fragment \alg M=\fragment \alg N$. This implies $\greatest\fragment \alg N=\greatest \fragment \alg M=M$. By \Cref{sigmarhosub}, we conclude $\alg M\in \op{Alg}(\logic M)$.

        Conversely, assume $\greatest\op{Alg}(\logic L)\subseteq\op{Alg}(\logic M)\subseteq \least \op{Alg}(\logic L)$. By \Cref{sigmarhosub}, it follows that $\fragment \greatest\op{Alg}(\logic{L})=\op{Alg}(\logic L)$, so $\fragment\op{Alg}(\logic M)\supseteq\op{Alg}(\logic L)$. But by the definitions of $\fragment, \least$ we have $\fragment\op{Alg}(\logic M)=\fragment \least\op{Alg}(\logic L)$, so also $\fragment\op{Alg}(\logic M)\subseteq\op{Alg}(\logic L)$. By \Cref{semanticcomp}, it follows that $\logic{M}$ is a monomodal companion of $\logic{L}$.
    \end{proof}

    Second, we prove the following analog of the Blok-Esakia theorem, which yields the Kuznetsov-Muravitsky isomorphism as a special case. It was announced by Esakia \cite{Esakia2006TMHCaCMEotIL}. We refer to it as an \emph{Esakia theorem}. 
    
    \begin{theorem}[Esakia theorem for $\sim$-rule systems]
        The map $\greatest$ and the restriction of $\fragment$ to $\lat{NExt}(\logic{K4.Grz})$ are mutually inverse complete lattice isomorphisms between $\lat{NExt}(\logic{mHC})$ and $\lat{NExt}(\logic{K4.Grz})$. \label{blokesakia}
    \end{theorem}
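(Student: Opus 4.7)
The plan is to derive the theorem from the algebraic work already done: the commutation lemma linking $\greatest$, $\least$, $\fragment$ at the level of rule systems with their algebraic counterparts (\Cref{algcommute}), the pointwise identity $\fragment\greatest\alg H=\alg H$ (\Cref{rhosigmaid}), the Skeletal Generation Theorem (\Cref{skellygen}), and the algebraization duality (\Cref{algebraization}). I would first verify well-definedness. The map $\greatest$ clearly takes values in $\lat{NExt}(\logic{K4.Grz})$ by construction. For $\fragment$ it suffices to check that $\logic{mHC}\subseteq \fragment\logic{K4.Grz}$, since $\fragment$ is monotone directly from its syntactic definition. Applying \Cref{algcommute} gives $\op{Alg}(\fragment\logic{K4.Grz})=\fragment\op{Alg}(\logic{K4.Grz})$, and every algebra in the latter is a frontal Heyting algebra by the proposition stating that $\fragment$ sends $\logic{K4}$-algebras into $\op{fHA}$. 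Thus $\op{Alg}(\fragment\logic{K4.Grz})\subseteq\op{fHA}=\op{Alg}(\logic{mHC})$, and applying $\op{ThR}$ yields $\fragment\logic{K4.Grz}\supseteq \logic{mHC}$.

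Next I would establish the two round-trip identities. For $\logic L\in\lat{NExt}(\logic{mHC})$, applying \Cref{algcommute} twice gives $\op{Alg}(\fragment\greatest\logic L)=\fragment\greatest\op{Alg}(\logic L)$. The inclusion $\op{Alg}(\logic L)\subseteq\fragment\greatest\op{Alg}(\logic L)$ is immediate from \Cref{rhosigmaid}, since $\alg H=\fragment\greatest\alg H$ lies in the right-hand side for every $\alg H\in\op{Alg}(\logic L)$. The reverse inclusion follows from \Cref{translationrho}: any $\alg M\in \greatest\op{Alg}(\logic L)=\op{Alg}(\greatest\logic L)$ validates $T(\Gamma/\Delta)$ for every $\Gamma/\Delta\in\logic L$, so $\fragment\alg M$ validates every such $\Gamma/\Delta$ and hence belongs to $\op{Alg}(\logic L)$. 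By algebraization (\Cref{algebraization}), $\fragment\greatest\logic L=\logic L$. For $\logic M\in\lat{NExt}(\logic{K4.Grz})$, \Cref{algcommute} yields $\op{Alg}(\greatest\fragment\logic M)=\greatest\fragment\op{Alg}(\logic M)$, and \Cref{skellygen} identifies this with $\op{Alg}(\logic M)$; algebraization again concludes $\greatest\fragment\logic M=\logic M$.

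Finally, both maps are plainly order-preserving from their definitions, so as mutually inverse monotone bijections between complete lattices they automatically preserve arbitrary joins and meets and hence constitute complete lattice isomorphisms. The real difficulty has already been absorbed into the Skeletal Generation Theorem, which itself depended on the Main Lemma (\Cref{mainlemma}) and the theory of pre-stable canonical rules; the present argument is essentially a bookkeeping exercise that transfers the algebraic skeletal-generation identity to rule systems through the algebraization duality.
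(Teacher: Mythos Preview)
Your proposal is correct and follows essentially the same route as the paper: both arguments reduce to the two round-trip identities $\fragment\greatest=\mathrm{id}$ (via \Cref{rhosigmaid}) and $\greatest\fragment=\mathrm{id}$ (via \Cref{skellygen}), together with order-preservation and the algebraization duality. The only cosmetic difference is that the paper works directly with the semantic maps on $\lat{Uni}(\op{fHA})$ and $\lat{Uni}(\op{K4.Grz})$ and then transfers via \Cref{algebraization}, whereas you stay on the rule-system side and invoke \Cref{algcommute} to pass back and forth; these are two presentations of the same argument.
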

    \begin{proof}
        It suffices to show that the semantic maps $\greatest:\lat{Uni}(\op{fHA})\to \lat{Uni}(\op{K4.Grz})$ and $\fragment:\lat{Uni}(\op{K4.Grz})\to \lat{Uni}(\op{fHA})$ are complete lattice isomorphisms and mutual inverses. Both maps are evidently order-preserving, and preservation of infinite joins follows from \Cref{translationrho}. Let $\class{U}\in \lat{Uni}(\op{K4.Grz})$. Then $\class{U}=\greatest\fragment U$ by \Cref{skellygen}, so $\greatest$ is surjective and a left inverse of $\fragment$. If $\class V\in \lat{Uni}(\op{fHA})$, then $\fragment \greatest\class U=\class U$ by \Cref{rhosigmaid}, so $\fragment$ is surjective and a left inverse of $\sigma$. Thus, $\greatest$ and $\fragment$ are mutual inverses, and therefore must both be bijections.  
    \end{proof}
    \begin{corollary}[Kuznetsov-Muravitsky isomorphism for $\sim$-rule systems]
         The restriction of  $\greatest$ to $\lat{NExt}(\logic{KM})$ and the restriction of $\fragment$ to $\lat{NExt}(\logic{GL})$ are mutually inverse complete lattice isomorphisms between $\lat{NExt}(\logic{KM})$ and $\lat{NExt}(\logic{GL})$.\label{kuzmura}
    \end{corollary}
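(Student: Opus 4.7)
The plan is to derive this corollary from the Esakia theorem (Theorem \ref{blokesakia}) by restricting the isomorphism $\greatest \colon \lat{NExt}(\logic{mHC}) \to \lat{NExt}(\logic{K4.Grz})$ to the principal up-set generated by $\logic{KM}$. Since the isomorphism established in Theorem \ref{blokesakia} preserves the order in both directions, it restricts to an isomorphism between any interval in the source lattice and its image. Thus the substantive content reduces to identifying the image of the up-set $\lat{NExt}(\logic{KM})$ under $\greatest$.

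First, I would verify the type-checking conditions: $\logic{KM} \in \lat{NExt}(\logic{mHC})$ holds by definition, and $\logic{GL} \in \lat{NExt}(\logic{K4.Grz})$ because the Grzegorczyk rule is well known to be derivable from the Löb rule over $\logic{K4}$ (alternatively, one can verify this algebraically by checking that every Magari algebra is a $\logic{K4.Grz}$-algebra, dually using \Cref{maxirr}). This ensures the restrictions of $\greatest$ and $\fragment$ to the relevant up-sets are well defined maps between $\lat{NExt}(\logic{KM})$ and $\lat{NExt}(\logic{GL})$, once the key identity below is established.

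The core step is to show $\greatest \logic{KM} = \logic{GL}$. Writing $\logic{KM} = \logic{mHC} \oplus \{/(\boxtimes p \to p) \to p\}$, Proposition~\ref{sigmataucoincide} gives
\[
\greatest \logic{KM} = \logic{GL} \oplus T[\{/(\boxtimes p \to p) \to p\}],
\]
and the same proposition's argument shows that $T(/(\boxtimes p \to p) \to p)$ is a theorem of $\logic{GL}$. Hence $\greatest \logic{KM} = \logic{GL}$. Applying the bijection $\fragment$ from Theorem~\ref{blokesakia} yields $\fragment \logic{GL} = \logic{KM}$.

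Finally, I would assemble the pieces. Since $\greatest$ and $\fragment$ are order-preserving mutual inverses on the full lattices, a rule system $\logic{L} \in \lat{NExt}(\logic{mHC})$ extends $\logic{KM}$ iff $\greatest \logic{L}$ extends $\greatest \logic{KM} = \logic{GL}$, and dually for $\fragment$. Therefore the restrictions
\[
\greatest \colon \lat{NExt}(\logic{KM}) \to \lat{NExt}(\logic{GL}), \qquad \fragment \colon \lat{NExt}(\logic{GL}) \to \lat{NExt}(\logic{KM})
\]
are well defined, remain mutually inverse, and inherit complete-lattice-isomorphism status from Theorem~\ref{blokesakia}. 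I do not expect a real obstacle here: all of the hard work has been absorbed into the main lemma and the Esakia theorem, and the only nontrivial computation is the identification $\greatest \logic{KM} = \logic{GL}$, which is precisely Proposition~\ref{sigmataucoincide} together with the translation of the Gödel–Löb rule performed in its proof.
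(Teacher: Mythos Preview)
Your approach is correct and essentially the same as the paper's: both reduce the corollary to the identity $\greatest\logic{KM} = \logic{GL}$ and then restrict the Esakia isomorphism to the principal up-set above $\logic{KM}$. One small slip: Proposition~\ref{sigmataucoincide} decomposes $\logic{L}$ as $\logic{KM} \oplus \Phi$, not as $\logic{mHC} \oplus \Phi$, so applying it with $\Phi = \varnothing$ yields $\greatest\logic{KM} = \logic{GL} \oplus T[\varnothing] = \logic{GL}$ immediately, without the detour through the translation of the Löb rule.
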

    \begin{proof}
        The corollary follows from the observation that  $\greatest\logic{KM}= \logic{GL}$. We know that $\greatest\logic{KM}\subseteq \logic{GL}$ from \Cref{sigmataucoincide}. For the other direction, it is enough to observe that every skeletal $\logic{GL}$-space is of the form $\greatest{X}$ for some $\logic{KM}$-space $\spa{X}$. 
    \end{proof}

    We note that both \Cref{blokesakia} and \Cref{kuzmura} remain true when restricted to lattices of \emph{logics} only. 
     \begin{corollary}[Esakia theorem for $\sim$-logics]
        The restriction of the mappings  $\greatest$ and $\fragment$ to $\lat{NExtL}(\logic{mHC})$ and $\lat{NExtL}(\logic{K4.Grz})$ respectively are mutually inverse complete lattice isomorphisms between $\lat{NExtL}(\logic{mHC})$ and $\lat{NExtL}(\logic{K4.Grz})$. \label{blokesakialogics}
    \end{corollary}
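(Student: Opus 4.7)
The plan is to restrict the mutually inverse lattice isomorphisms $\greatest, \fragment$ of \Cref{blokesakia} to the sub-lattices of logics on both sides. It suffices to show each of $\greatest$ and $\fragment$ maps logics to logics; the result then follows since they are mutual inverses on the larger lattices.

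For $\greatest$, suppose $\logic{L} \in \lat{NExtL}(\logic{mHC})$ is axiomatized as $\logic{L} = \logic{mHC} \oplus \{/\varphi : \varphi \in \Phi\}$. Since $T(/\varphi) = /T(\varphi)$, the definition of $\greatest$ yields
\[\greatest \logic{L} = \logic{K4.Grz} \oplus \{/T(\varphi) : \varphi \in \Phi\}.\]
As $\logic{K4.Grz}$ is itself a logic, this presents $\greatest \logic{L}$ as axiomatized by assumption-free single-conclusion rules, so $\greatest \logic{L}$ is a logic.

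For $\fragment$, I would appeal to the algebraic characterization from \Cref{algebraization}: a rule system $\logic{M} \in \lat{NExt}(\logic{K4.Grz})$ is a logic iff $\op{Alg}(\logic{M})$ is a variety. By \Cref{algcommute} part (\ref{algcommutefragment}), it therefore suffices to show that whenever $\class{V} \subseteq \op{K4.Grz}$ is a variety, $\fragment \class{V}$ is a variety of frontal Heyting algebras. \Cref{blokesakia} already provides that $\fragment \class{V}$ is a universal class, so we need only verify closure under direct products and homomorphic images. Products are straightforward: a tuple $(a_i)_i$ in $\prod_i \alg{M}_i$ is quasi-open iff each $a_i$ is quasi-open in $\alg{M}_i$, so $\fragment(\prod_i \alg{M}_i) \cong \prod_i \fragment \alg{M}_i$, and closure under products transfers from $\class{V}$ to $\fragment \class{V}$.

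The main obstacle is closure of $\fragment \class{V}$ under homomorphic images. Given $\alg{H} = \fragment \alg{M}$ with $\alg{M} \in \class{V}$ and a surjective frontal Heyting homomorphism $g \colon \alg{H} \twoheadrightarrow \alg{K}$, I would lift using duality. Dually, $g_*$ embeds $\dualspa{K}$ as a closed generated subframe of $\dualspa{H} \cong \fragment \dualspa{M}$. Define $\spa{Y}$ to be the preimage of $g_*(\dualspa{K})$ under the cluster collapse $\varrho \colon \dualspa{M} \twoheadrightarrow \fragment \dualspa{M}$. Since $\varrho$ is a bounded morphism whose fibres are entire clusters, every cluster of $\dualspa{M}$ lies entirely inside or outside $\spa{Y}$, and the upward-closure of $g_*(\dualspa{K})$ in $\fragment \dualspa{M}$ transfers to $\spa{Y}$; hence $\spa{Y}$ is a closed generated subframe of $\dualspa{M}$. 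Its dual algebra $\alg{N}$ is therefore a homomorphic image of $\alg{M}$, placing $\alg{N} \in \class{V}$, and a direct check using the cluster structure gives $\fragment \spa{Y} \cong \dualspa{K}$, i.e.\ $\fragment \alg{N} \cong \alg{K}$. Thus $\alg{K} \in \fragment \class{V}$, closing the argument.
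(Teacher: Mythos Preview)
Your overall strategy is correct and arguably cleaner than the paper's. You observe that once both $\greatest$ and $\fragment$ are shown to preserve logics, their restrictions are mutually inverse order-preserving bijections between $\lat{NExtL}(\logic{mHC})$ and $\lat{NExtL}(\logic{K4.Grz})$; since an order isomorphism between complete lattices automatically preserves all meets and joins, the complete lattice isomorphism follows immediately. The paper takes a different route: it asserts without argument that $\greatest,\fragment$ preserve logics (``by construction'') and then separately verifies that the restricted maps commute with meets in the logic lattices, via the identity $\greatest\circ\op{Taut}=\op{Taut}\circ\greatest$ and \Cref{logicmeet}. Your approach trades that $\op{Taut}$-calculation for an explicit verification that logics are preserved; the duality argument you give for $\fragment$ (lifting a generated subframe through the cluster collapse $\varrho$) is a nice way to see that $\fragment$ of a variety is closed under homomorphic images.

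One small gap: in your treatment of $\greatest$, the displayed identity $\greatest\logic{L}=\logic{K4.Grz}\oplus\{/T(\varphi):\varphi\in\Phi\}$ is not literally ``by definition.'' The definition gives $\greatest\logic{L}=\logic{K4.Grz}\oplus\{T(\Gamma/\Delta):\Gamma/\Delta\in\logic{L}\}$, which a priori contains many more rules than the translations of the axioms $\Phi$. The identity you want follows, but needs \Cref{blokesakia}: setting $\logic{M}':=\logic{K4.Grz}\oplus\{/T(\varphi):\varphi\in\Phi\}$, one has $\fragment\logic{M}'\supseteq\fragment\logic{K4.Grz}=\logic{mHC}$ and each $/\varphi\in\fragment\logic{M}'$, so $\fragment\logic{M}'\supseteq\logic{L}$, whence $\greatest\logic{L}\subseteq\greatest\fragment\logic{M}'=\logic{M}'$. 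This is easy to supply, but you should not call it definitional.
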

    \begin{proof}
        By construction, $\greatest$ and $\fragment$ preserve the property of being a logic. Given \Cref{blokesakia}, the restrictions of $\greatest$ and $\fragment$ to logics are mutual inverses, so it suffices to show that they commute with meets and joins. For joins, this is obvious, as the join of two logics is a logic. For meets, we show that $\greatest$ commutes with $\op{Taut}$. For if this condition holds, we may reason as follows:
        \begin{align*}
            \greatest (\logic L\otimes_{\lat{ExtL}(\logic{mHC})} \logic L')&=\greatest(\op{Taut}(\logic{L}\otimes_{\lat{Ext}(\logic{mHC})}\logic L')) &\text{by \Cref{logicmeet}}\\
            &=\op{Taut}(\greatest(\logic{L}\otimes_{\lat{Ext}(\logic{mHC})}\logic L'))\\
            &=\op{Taut}(\greatest \logic L\otimes_{\lat{Ext}(\logic{mHC})}\greatest \logic L')&\text{by \Cref{blokesakia}}\\
            &=\greatest \logic L\otimes_{\lat{ExtL}(\logic{mHC})}\greatest \logic L'.&\text{by \Cref{logicmeet}}.
        \end{align*}
        Indeed, since $\greatest$ is order-preserving and  $\op{Taut}(\logic L)\subseteq \logic L$ we have $\greatest \op{Taut}(\logic L)\subseteq \greatest\logic L$. Since $\op{Taut}$ is also order-preserving and $\op{Taut}(\greatest \op{Taut}(\logic L))=\greatest \op{Taut}(\logic L)$, also $\greatest\op{Taut}(\logic L)\subseteq \op{Taut}(\greatest\logic L)$. Likewise, $\fragment\op{Taut}(\logic M)\subseteq \op{Taut}(\fragment\logic M)$ for every $\logic M\in \lat{NExt}(\logic{K4})$. Together, the last two claims imply $\greatest\logic L\subseteq\greatest \op{Taut}(\logic L)$.
      \end{proof}
     \begin{corollary}[Kuznetsov-Muravitsky isomorphism]
         The restriction of  $\greatest$ to $\lat{NExtL}(\logic{KM})$ and the restriction of $\fragment$ to $\lat{NExtL}(\logic{GL})$ are mutually inverse complete lattice isomorphisms between $\lat{NExtL}(\logic{KM})$ and $\lat{NExtL}(\logic{GL})$.
    \end{corollary}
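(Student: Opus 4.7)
The plan is to mimic, almost verbatim, the proof of \Cref{blokesakialogics}, using \Cref{kuzmura} in place of \Cref{blokesakia}. First I would observe that, by construction, $\greatest$ sends logics to logics (since $T$ translates the assumption-free rule $/\varphi$ to the assumption-free rule $/T(\varphi)$) and $\fragment$ sends logics to logics (since $\Gamma/\Delta \in \fragment \logic{M}$ whenever $T(\Gamma/\Delta)\in \logic M$, so if $\logic M$ is axiomatized by assumption-free rules, then so is $\fragment \logic M$ up to equivalence, via the syntactic shape of the $T$-translation). Hence the restrictions in question have well-defined codomains, and by \Cref{kuzmura} they are mutual inverses.

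Next I would verify preservation of joins and meets. For joins, this is immediate: the join of a family of logics in $\lat{NExtL}$ coincides with the join in $\lat{NExt}$, so the claim follows from \Cref{kuzmura}. For meets, I would repeat the trick from the proof of \Cref{blokesakialogics}, namely show that $\greatest$ commutes with $\op{Taut}$ and then compute
\begin{align*}
\greatest(\logic L \otimes_{\lat{NExtL}(\logic{KM})} \logic L')
&= \greatest\bigl(\op{Taut}(\logic L \otimes_{\lat{NExt}(\logic{KM})} \logic L')\bigr) \\
&= \op{Taut}\bigl(\greatest(\logic L \otimes_{\lat{NExt}(\logic{KM})} \logic L')\bigr) \\
&= \op{Taut}(\greatest \logic L \otimes_{\lat{NExt}(\logic{KM})} \greatest \logic L') \\
&= \greatest \logic L \otimes_{\lat{NExtL}(\logic{GL})} \greatest \logic L',
\end{align*}
invoking \Cref{logicmeet} in the first and last line and \Cref{kuzmura} in the third. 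The commutation $\greatest \circ \op{Taut} = \op{Taut}\circ \greatest$ is verified exactly as in the proof of \Cref{blokesakialogics}: one direction uses monotonicity of $\greatest$ and the fact that $\op{Taut}$ is a descending interior-style operator, and the reverse direction uses the corresponding inclusion $\fragment \op{Taut}(\logic M)\subseteq \op{Taut}(\fragment \logic M)$ for $\logic M\in \lat{NExt}(\logic{GL})$ together with the fact that $\greatest$ and $\fragment$ are mutual inverses on the relevant intervals.

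The only place one must be a bit careful is ensuring that the ambient lattices in the meet computation are the intervals above $\logic{KM}$ and $\logic{GL}$, not just above $\logic{mHC}$ and $\logic{K4.Grz}$. This is not actually an obstacle, since \Cref{kuzmura} is stated precisely for these intervals, and the proof of \Cref{logicmeet} does not depend on the base rule system. I therefore do not expect any genuinely hard step: the entire argument is a formal consequence of \Cref{kuzmura}, \Cref{logicmeet}, and the fact that $\greatest, \fragment$ preserve logics. The main ``content'' has already been spent in establishing \Cref{mainlemma,skellygen,kuzmura}; what remains here is purely the routine restriction-to-logics bookkeeping.
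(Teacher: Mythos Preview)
Your argument is correct, but it takes a longer path than the paper. The paper simply observes that the corollary follows from \Cref{blokesakialogics} the same way \Cref{kuzmura} follows from \Cref{blokesakia}: since \Cref{blokesakialogics} already gives a complete lattice isomorphism between $\lat{NExtL}(\logic{mHC})$ and $\lat{NExtL}(\logic{K4.Grz})$, and since $\greatest\logic{KM}=\logic{GL}$, one need only restrict that isomorphism to the intervals above $\logic{KM}$ and $\logic{GL}$. What you do instead is start from \Cref{kuzmura} (the rule-system isomorphism for the $\logic{KM}/\logic{GL}$ interval) and then rerun the entire ``restriction to logics'' argument from the proof of \Cref{blokesakialogics}, including the $\op{Taut}$ commutation trick. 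This is perfectly valid and self-contained, but it duplicates work already packaged in \Cref{blokesakialogics}; the paper's route is shorter because the $\op{Taut}$ bookkeeping need not be repeated once the ambient logic-level isomorphism is in hand.
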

    \begin{proof}
        Follows from \Cref{blokesakialogics} the same way \Cref{kuzmura} follows from \Cref{blokesakia}.
    \end{proof}
    
\section{Translations of Pre-Stable Canonical Rules}
\label{sec:trans}

In this last section, we characterize the translation $T$ in terms of pre-stable canonical rules. We then apply our characterization to establish a few more results: the announced strengthening of \Cref{admitsfiltrationclm} concerning pre-filtrations of Magari algebras, and two preservation theorems concerning the mapping $\sigma$. For the remainder of the paper, we focus only on $\sim$-rule systems above $\logic{KM}$, since our axiomatizations in terms of $\sim$ pre-stable canonical rules do not extend below $\logic{KM}$.

\subsection{Classicization and the Rule Translation Lemma}

The main difference between $\sim$ and $\clm$ pre-stable canonical rules is that the former have two domains, while the latter have one. We can get by with having a single domain in the $\clm$ case because $\square^+$ is a compound operator, defined in terms of $\square$. When constructing a pre-filtration of a $\clm$-rule $\Gamma/\Delta$, if $\square^+\varphi\in \mathit{Sfor}(\Gamma/\Delta)$, then also $\square\varphi\in \mathit{Sfor}(\Gamma/\Delta)$. So, the way we make sure that $\square^+\varphi$ receives the same valuation in the filtration as in the original model is simply by making sure that $\square \varphi$ does. 

The translation $T$ associates $\to$ with $\square^+$ and $\boxtimes$ with $\square$. However, $\to$ is not defined in terms of $\boxtimes$. We thus need to keep track of when $\to$ and $\boxtimes$ need to be fully preserved separately; syntax is of no help. This generates a problem: how do we turn the two domains in  $\scrsi{F}{\spa D}$ to turn the latter into a $\clm$ pre-stable canonical rule, given that there are no obvious connections between the two domains?

The solution to this problem is to restrict attention to a particular kind of $\sim$ pre-stable canonical rules, in which $\spa D^\leq$ has a natural embedding into $\spa D^\sqsubset$. The correspondence ensures that when checking the refutability of such rules, we need only verify that the witnessing pre-stable surjection satisfies the BFC$^\sqsubset$ for $\spa D^\sqsubset$.
\begin{definition}
    A $\sim$ pre-stable canonical rule $\scrsi{H}{D}$ is \emph{classicizable} when $(a, b)\in  D^\to$ implies $a\to b\in  D^\boxtimes$.
\end{definition}
Dually, $\scrsi{F}{\spa D}$ is classicizable when $\beta (a)\cap -\beta (b)\in \spa D^\to$ implies $\downset[\leq]{(\beta (a)\cap -\beta(b))}\in \spa D^\sqsubset$.

\begin{lemma}
    Let $\spa X, \spa Y$ be $\logic{GL}$-spaces and let $f:\spa X\to \spa Y$ be a pre-stable map satisfying the BFC$^R$ for a domain of the form $\spa E:=\{\downset[R^+]{\spa d}: \spa d\in \spa D\}$, with $\spa D$ any domain on $\spa Y$. Then $f$ also satisfies the BFC$^R$ for  $\spa D$. Moreover, the claim remains true if we let $\spa X, \spa Y$ be $\logic {KM}$-spaces and substitute $\sqsubset$ for $R$ and $\leq$ for $R^+$. \label{bfcdownset}
\end{lemma}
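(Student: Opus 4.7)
My plan is to derive each half of the BFC$^R$ for $\spa D$ from the corresponding half of the assumed BFC$^R$ for $\spa E=\{\downset[R^+]{\spa d}:\spa d\in\spa D\}$, using three ingredients: the inclusion $\spa d\subseteq\downset[R^+]{\spa d}$, transitivity of $R$ on $\logic{GL}$-spaces, and the structural fact that $\max\downset[R^+]{\spa d}\subseteq\spa d$. The last holds in any transitive space: if $w\in\max\downset[R^+]{\spa d}$, then $w R^+ u$ for some $u\in\spa d$, and if $w\neq u$ then $u\in\upset[R]{w}\cap\downset[R^+]{\spa d}\subseteq\{w\}$, a contradiction, so $w=u\in\spa d$.

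The forth direction is the easy one. Given $y\in f^{-1}(\spa d)$ with $Rxy$, the inclusion yields $y\in f^{-1}(\downset[R^+]{\spa d})$, so the forth part of the BFC$^R$ hypothesis for $\downset[R^+]{\spa d}$ produces $z'\in\downset[R^+]{\spa d}$ with $Rf(x)z'$. Picking $w\in\spa d$ with $z' R^+ w$, either $z'=w$ and we are done, or $Rz'w$ and transitivity in $\spa Y$ gives $Rf(x)w$, so $w$ is the required witness.

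For the back direction, given $y\in\spa d$ with $Rf(x)y$, the back part of the BFC$^R$ hypothesis for $\downset[R^+]{\spa d}$ delivers $z'\in X$ with $Rxz'$ and $f(z')\in\downset[R^+]{\spa d}$. The obstacle here is that $f(z')$ need not land in $\spa d$ itself. To repair this I would push $z'$ up to a maximal element $z''$ of the clopen preimage $f^{-1}(\downset[R^+]{\spa d})$, which exists by \Cref{seemaximals} applied in the $\logic{GL}$-space $\spa X$. Then \Cref{maxmax}, applied to the given BFC$^R$ hypothesis for $\spa E$, yields $f(z'')\in\max\downset[R^+]{\spa d}$, and the structural fact above forces $f(z'')\in\spa d$. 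Finally, transitivity of $R$ in $\spa X$ gives $Rxz''$ from $Rxz'$ and $z' R^+ z''$ (trivially if $z'=z''$).

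The $\logic{KM}$ case proceeds identically after substituting $\sqsubset$ for $R$ and $\leq$ for $R^+$, using transitivity of $\sqsubset$ on $\logic{KM}$-spaces (which follows from $p\to\boxtimes p$) and the $\logic{KM}$ version of \Cref{seemaximals}. The main obstacle is the back direction, specifically bridging the gap between $f(z')\in\downset[R^+]{\spa d}$ and $f(z'')\in\spa d$; this is exactly where the maximal-preimage construction and the $\logic{GL}$/$\logic{KM}$ character of the ambient space are essential, since without the existence of maximal witnesses there would be no way to force the image into $\spa d$.
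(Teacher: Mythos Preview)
Your proof is correct and follows essentially the same route as the paper's. The back direction is identical: apply the BFC$^R$ hypothesis for $\downset[R^+]{\spa d}$, push the witness up to a maximal element of $f^{-1}(\downset[R^+]{\spa d})$ via \Cref{seemaximals}, and use \Cref{maxmax} together with $\max{\downset[R^+]{\spa d}}\subseteq\spa d$ to land in $f^{-1}(\spa d)$. For the forth direction you use a direct transitivity argument (chasing $z'\in\downset[R^+]{\spa d}$ forward to some $w\in\spa d$), whereas the paper again invokes \Cref{seemaximals} in $\spa Y$ to replace $z'$ by a maximal element of $\downset[R^+]{\spa d}$; your version is slightly more elementary, since it only needs transitivity rather than the $\logic{K4.Grz}$ structure of $\spa Y$, but the difference is cosmetic.
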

\begin{proof}
    Take $\spa d\in \spa D$ and $x\in X$. For the ``back'' condition, suppose there is $y\in \spa d$ with $Rf(x)y$. Then $y\in\downset[R^+]{\spa d}$.  Since $f$ satisfies the BFC for $\spa E$, there must be $z\in f^{-1}(\downset[R^+]{\spa d})$ with $Rxz$. By the properties of $\logic{GL}$-spaces, we may assume $z$ is maximal in $f^{-1}(\downset[R^+]{\spa d})$. By \Cref{maxmax} we have
    \begin{align*}
        \max{f^{-1}(\downset[R^+]{\spa d})}&=f^{-1}(\max{\downset[R^+]{\spa d}})\\
        &=f^{-1}(\max{{\spa d}})\\
        &\subseteq f^{-1}(\spa d),
    \end{align*}
    and we are done. 

    For the ``forth'' condition, suppose there is $y\in f^{-1}(\spa d)$ with $Rxy$. Then $y\in{f^{-1}(\downset[R^+]{\spa d})}$. Since $f$ satisfies the BFC for $\spa E$, there must be some $z\in \downset[R^+]{\spa d}$ such that $Rf(x) z$. By the properties of $\logic{GL}$-spaces, we may assume that $z\in \max{\downset[R^+]{\spa d}}=\max{\spa d}\subseteq \spa d$, and we are done. 
    The argument is completely analogous in the case of $\logic{KM}$-spaces. 
\end{proof}
\begin{lemma}
    Let $\scrsi{F}{\spa D}$ be a classicizable $\sim$ pre-stable canonical rule. A $\logic{KM}$-space $\spa X$ refutes $\scrsi{F}{\spa D}$ iff there is a pre-stable surjection $f:\spa X\to \spa F$ satisfying the BFC$^\sqsubset$ for $\spa D^\sqsubset$.
\end{lemma}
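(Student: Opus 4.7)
My plan is to show the two directions separately, with most of the content lying in the backward direction.

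The forward direction is immediate: by Proposition \ref{prestabledual} (in its $\sim$-Kripke/space variant, as in Proposition \ref{refutspace}), a refutation of $\scrsi{F}{\spa{D}}$ on $\spa{X}$ corresponds to a pre-stable surjection $f \colon \spa{X} \to \spa{F}$ satisfying the BFC$^{\leq}$ for $\spa{D}^{\to}$ and the BFC$^{\sqsubset}$ for $\spa{D}^{\sqsubset}$. The BFC$^{\sqsubset}$ condition is what we need.

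For the backward direction, let $f \colon \spa{X} \to \spa{F}$ be a pre-stable surjection satisfying the BFC$^{\sqsubset}$ for $\spa{D}^{\sqsubset}$. Appealing again to Proposition \ref{refutspace}, it suffices to verify that $f$ also satisfies the BFC$^{\leq}$ for $\spa{D}^{\to}$. I would carry this out in two steps. First, I would use classicizability together with Lemma \ref{bfcdownset} to upgrade BFC$^{\sqsubset}$ for $\spa{D}^{\sqsubset}$ to BFC$^{\sqsubset}$ for $\spa{D}^{\to}$: given $\spa{d} \in \spa{D}^{\to}$, classicizability ensures $\downset[\leq]{\spa{d}} \in \spa{D}^{\sqsubset}$, so $f$ satisfies BFC$^{\sqsubset}$ for $\downset[\leq]{\spa{d}}$, and Lemma \ref{bfcdownset} then yields BFC$^{\sqsubset}$ for $\spa{d}$ itself.

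Second, I would pass from BFC$^{\sqsubset}$ for $\spa{d}$ to BFC$^{\leq}$ for $\spa{d}$ by exploiting the fact that in any modalized Esakia space the relation $\leq$ is the reflexive closure of $\sqsubset$. For the ``forth'' clause: if $x \leq y$ and $f(y) \in \spa{d}$, then since $f$ is pre-stable it preserves $\leq$, so $f(x) \leq f(y) \in \spa{d}$ and we may take $z := f(y)$. For the ``back'' clause: suppose $f(x) \leq y$ with $y \in \spa{d}$. If $f(x) = y$ then $z := x$ works; otherwise $f(x) \sqsubset y$, and BFC$^{\sqsubset}$ for $\spa{d}$ (established in step one) supplies $z \in X$ with $x \sqsubset z$ and $f(z) \in \spa{d}$, which yields $x \leq z$ as desired.

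The main obstacle is conceptual rather than technical: one has to recognize that classicizability is precisely the syntactic device ensuring that every element of $\spa{D}^{\to}$ has its $\leq$-downset inside $\spa{D}^{\sqsubset}$, which is exactly the hypothesis needed to invoke Lemma \ref{bfcdownset}. Once that link is made, both steps are short arguments that draw on already-established facts (Lemma \ref{bfcdownset} and pre-stability), and no new properties of $\logic{KM}$-spaces beyond $\leq$ being the reflexive closure of $\sqsubset$ are required.
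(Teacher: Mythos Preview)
Your proof is correct and follows essentially the same approach as the paper, which tersely says the backward direction ``follows from \Cref{bfcdownset}.'' You have correctly unpacked what this entails: first using classicizability to place $\downset[\leq]{\spa d}$ inside $\spa D^\sqsubset$ so that \Cref{bfcdownset} yields the BFC$^\sqsubset$ for $\spa D^\to$, and then passing from BFC$^\sqsubset$ to BFC$^\leq$ via pre-stability and the fact that $\leq$ is the reflexive closure of $\sqsubset$; this second step is genuinely needed but the paper leaves it implicit.
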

\begin{proof}
    The left-to-right direction is obvious and the right-to-left direction follows from \Cref{bfcdownset}.
\end{proof}

We can always restrict attention to classicizable rules without loss of generality. 
\begin{theorem}
    Every $\sim$-rule $\Gamma/\Delta$ is equivalent, over $\logic{KM}$, to finitely many classicizable $\sim$ pre-stable canonical rules. 
\end{theorem}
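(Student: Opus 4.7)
The plan is to adapt the proof of Theorem~\ref{rewrite} by enlarging the filtering set so that classicizability is forced by construction. Given $\Gamma/\Delta$, define
\[
\Theta^c := \mathit{Sfor}(\Gamma/\Delta) \cup \{\boxtimes(\varphi\to\psi) : \varphi\to\psi \in \mathit{Sfor}(\Gamma/\Delta)\}.
\]
This $\Theta^c$ is still finite, and it is subformula-closed because every proper subformula of $\boxtimes(\varphi\to\psi)$ already lies in $\mathit{Sfor}(\Gamma/\Delta)$. Moreover, $\Theta^c$ contains no new top-level implications, so its binary ``$\to$-part'' coincides with that of $\mathit{Sfor}(\Gamma/\Delta)$, while its unary ``$\boxtimes$-part'' is enlarged to absorb the implications.

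Replay the proof of Theorem~\ref{rewrite} with $\Theta^c$ in place of $\mathit{Sfor}(\Gamma/\Delta)$. Let $\Phi$ be the finite (up to isomorphism) set of $\sim$ pre-stable canonical rules $\scrsi{K}{D}$ where $\alg{K}$ is a fronton of suitably bounded size and, for some valuation $V'$ on $\alg{K}$ with $\alg{K}, V'\not\models\Gamma/\Delta$, we set
\[
D^\to = \{(V'(\varphi),V'(\psi)) : \varphi\to\psi\in\Theta^c\}, \qquad D^\boxtimes = \{V'(\varphi) : \boxtimes\varphi\in\Theta^c\}.
\]
Each such rule is classicizable: given $(a,b)\in D^\to$, write $(a,b) = (V'(\varphi),V'(\psi))$ with $\varphi\to\psi\in\Theta^c$, whence $\varphi\to\psi\in\mathit{Sfor}(\Gamma/\Delta)$ and therefore $\boxtimes(\varphi\to\psi)\in\Theta^c$ by construction, so $V'(\varphi\to\psi)\in D^\boxtimes$. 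Since $V'$ is a valuation on $\alg K$, we have $V'(\varphi\to\psi) = V'(\varphi)\to V'(\psi) = a\to b$, computed in $\alg{K}$, giving $a\to b\in D^\boxtimes$ as required.

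The equivalence with $\Gamma/\Delta$ over $\logic{KM}$ then follows exactly as in Theorem~\ref{rewrite}. In one direction, if a fronton $\alg{H}$ refutes $\Gamma/\Delta$ with valuation $V$, Theorem~\ref{admitsprefiltrationsim} supplies a pre-filtration $(\alg{K},V')$ of $(\alg{H},V)$ through $\Theta^c$ based on a fronton; Theorem~\ref{prefiltrationtheorem} gives $\alg{K},V'\not\models\Gamma/\Delta$, and Proposition~\ref{refutalg} together with the inclusion $\subseteq\colon\alg{K}\to\alg{H}$ witnesses $\alg{H}\not\models\scrsi{K}{D}$ for the corresponding $\scrsi{K}{D}\in\Phi$. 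In the other direction, if $\alg{H}\not\models\scrsi{K}{D}$ for some $\scrsi{K}{D}\in\Phi$, Proposition~\ref{refutalg} yields a pre-stable embedding $h\colon\alg{K}\to\alg{H}$ satisfying the BDC for $D$; setting $V(p) := h(V'(p))$ on the variables and inducting on $\Theta^c$ (using the BDCs provided by $D$) shows $V(\varphi) = h(V'(\varphi))$ for every $\varphi\in\Theta^c\supseteq\mathit{Sfor}(\Gamma/\Delta)$, so $\alg{K},V'\not\models\Gamma/\Delta$ transfers to $\alg{H},V\not\models\Gamma/\Delta$ since $h$ is injective and preserves $\top$.

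The construction is largely routine; the only subtle point is choosing $\Theta^c$ so that (i) it remains subformula-closed (needed for the pre-filtration theorem), (ii) it does not introduce new top-level implications (so classicizability reduces to checking that each implication pair's ambient value already sits inside $D^\boxtimes$), and (iii) the cardinality bounds of Theorem~\ref{rewrite} still apply (which they do, since $|\Theta^c|\le 2|\mathit{Sfor}(\Gamma/\Delta)|$). All three properties are immediate from the definition of $\Theta^c$, so no genuinely new obstacle arises beyond those already handled in Theorem~\ref{rewrite}.
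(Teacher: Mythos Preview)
Your proposal is correct and follows essentially the same approach as the paper. The only cosmetic difference is that you enlarge the filtering set syntactically by adjoining the formulae $\boxtimes(\varphi\to\psi)$, whereas the paper enlarges the domain $D^\boxtimes$ directly by adjoining the values $V(\varphi\to\psi)$ before running the construction of \Cref{admitsprefiltrationsim}; both moves have the same effect of forcing $a\to b\in D^\boxtimes$ whenever $(a,b)\in D^\to$.
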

\begin{proof}
    The argument is essentially the same as that given in \Cref{rewrite}, with the following caveat. At the step where we filtrate a countermodel of $\Gamma/\Delta$ to construct a pre-stable canonical rule, using the construction from \Cref{admitsprefiltrationsim}, start out by defining \[D^\boxtimes:=\{V(\varphi):\boxtimes \varphi\in \Theta\}\cup\{V(\varphi\to \psi):\varphi\to \psi\in \mathit{Sfor}(\Gamma/\Delta)\}.\]
    Then run the rest of the construction exactly the same way. The rule obtained at the end is evidently classicizable. 
\end{proof}

We can now characterize the translation $T$. Let $\scrsi{F}{\spa D}$ be a classicizable $\sim$ pre-stable canonical rule. We define the \emph{classicization} $\ruletrans{F}{\spa D}$ of $\scrsi{F}{\spa D}$ by setting 
\[\ruletrans{F}{\spa D}:=\scrmod{\sigma F}{ D_\circ},\]
where $\spa D_\circ:=\spa D^\sqsubset$. 

We call a $\clm$ pre-stable canonical rule $\scr{F}{D}$ \emph{classicized} when $\spa F$ is a skeletal $\logic{GL}$ space and every  $\spa d\in \spa{D}$ is a downset. It should be clear that $\scr{F}{D}$ is classicized precisely when it is the classicization of some $\sim$ pre-stable canonical rule.

\begin{lemma}[Rule translation lemma]
    Let $\scrsi{F}{\spa D}$ be a classicizable $\sim$ pre-stable canonical rule and let $\spa X$ be a $\logic{GL}$-space. Then $\spa X\models \ruletrans{F}{\spa D}$ iff $\spa X\models T(\scrsi{ F}{\spa D})$.\label{ruletrans}
\end{lemma}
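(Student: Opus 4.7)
The plan is to prove the biconditional by passing through the refutability characterizations on both sides. By \Cref{refutspace}, $\spa X \not\models \ruletrans{F}{\spa D} = \scrmod{\greatest \spa F}{\spa D^\sqsubset}$ iff there is a pre-stable surjection $g: \spa X \to \greatest \spa F$ satisfying the BFC$^R$ for $\spa D^\sqsubset$. On the other side, $\spa X \not\models T(\scrsi{F}{\spa D})$ iff $\fragment \spa X \not\models \scrsi{F}{\spa D}$ by \Cref{translationrho}, which by the preceding lemma (exploiting classicizability) is equivalent to the existence of a pre-stable surjection $f: \fragment \spa X \to \spa F$ satisfying the BFC$^\sqsubset$ for $\spa D^\sqsubset$. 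Hence it suffices to show the equivalence of these two refutability conditions.

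The crucial structural fact is that $\greatest \spa F$ has no proper $R$-clusters. Indeed, $\greatest$ does not alter the carrier of $\spa F$ and $\fragment \greatest \spa F = \spa F$ by \Cref{rhosigmaid}, so any proper cluster in $\greatest \spa F$ would collapse under $\fragment$ and yield a carrier mismatch; equivalently, $\spa F$ has no proper $\sqsubset$-clusters by antisymmetry of $\leq$. Consequently, the quotient $\varrho_F : \greatest \spa F \to \spa F$ is a carrier bijection that identifies $R$ on $\greatest \spa F$ with $\sqsubset$ on $\spa F$, and every $\spa d \in \spa D^\sqsubset$ names the same clopen in either space.

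Given this identification, the correspondence between $f$ and $g$ is direct. Starting from $f$, set $g := f \circ \varrho_X$ where $\varrho_X : \spa X \to \fragment \spa X$ is the cluster-collapse map; starting from $g$, set $f(\varrho_X(x)) := g(x)$. Well-definedness of the latter uses that $x \backsim y$ in $\spa X$ with $x \neq y$ yields $Rxy$ and $Ryx$, hence $R^+ g(x) g(y)$ and $R^+ g(y) g(x)$ in $\greatest \spa F$ (since $g$ preserves $R^+$), forcing $g(x) = g(y)$ by the absence of proper clusters in $\greatest \spa F$. The remaining properties—continuity, surjectivity, preservation of $\leq$ / $R^+$, and both the back and forth parts of the BFC—translate routinely between $f$ and $g$ by unwinding the definition of the relations on $\fragment \spa X$ in terms of representatives in $\spa X$ (that is, $\varrho_X(x) \sqsubset \varrho_X(z)$ iff $Rxz$, and likewise for $\leq$ and $R^+$).

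The main obstacle is establishing the cluster-triviality of $\greatest \spa F$, since this is what makes the identification of $\greatest \spa F$ and $\spa F$ lossless and allows the classical modal and intuitionistic modal witnesses to be interconverted. Once that observation is in place, the remaining verifications are standard definition-chasing.
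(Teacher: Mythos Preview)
Your proposal is correct and follows essentially the same approach as the paper's proof: both directions go through the refutability characterizations, use \Cref{translationrho} to pass to $\fragment\spa X$, exploit that $\greatest\spa F$ is skeletal (your ``cluster-triviality'') to make the map $g\mapsto f(\varrho_X(x)):=g(x)$ well defined, and invoke classicizability (via the preceding lemma, which the paper phrases as an appeal to \Cref{bfcdownset}) to avoid checking the BFC$^\leq$ for $\spa D^\leq$ separately. The paper's proof is a terse sketch referring elsewhere for details; your write-up simply unpacks the same steps more explicitly.
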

\begin{proof}
    We give only a proof sketch; more details can be found in \cite{BezhanishviliBETVSCR}, where a similar result is proved in a different signature. 
      $(\Rightarrow)$ Suppose $\spa X\not\models  T(\scrsi{F}{\spa D})$. By \Cref{translationrho} this implies $\fragment \spa X\not\models \scrsi{F}{\spa D}$. So, there is a pre-stable surjection $f:\fragment \spa X\to \spa F$ satisfying the BFC for $\spa D^{\leq}$ and $\spa D^{\sqsubset}$. Composing $f$ and the cluster collapse map $\varrho: \spa X\to \fragment  \spa X$ yields a pre-stable surjection $f \circ \varrho:\spa X\to \greatest\spa F$ that the BFC for $\spa D$. 

    $(\Leftarrow)$ Suppose $\spa X\not\models \ruletrans{F}{\spa D}$. Then there is a pre-stable surjection $f:\spa X\to \greatest \spa F$ that satisfies  the BFC for $\spa D$. We define a map $g: \fragment \spa X\to \spa F$ by setting $g(\varrho (x)):=f(x)$. This is well defined: since $\greatest\spa F$ is skeletal, elements of $\spa X$ that belong to the same cluster have the same image under $f$. Clearly, $g$ is pre-stable, surjective, and satisfies  the BFC$^\sqsubset$ for $\spa D^{\sqsubset}$. By \Cref{bfcdownset}, it also satisfies the BFC$^\leq$ for $\spa D^\leq$. 
\end{proof}

\subsection{Application: pre-filtrations of Magari algebras}

We now establish the strengthening of \Cref{admitsfiltrationclm} announced earlier. We will build pre-filtrations of models based on Magari algebras by taking a detour through frontons. Given a valuation $V$ with the right shape on a Magari algebra $\alg M$, we will first extract a finite fronton from $\fragment\alg M$, then define a pre-filtration of $(\alg M, V)$ based on $\greatest\fragment \alg M$.

\begin{lemma}
    Let $\alg{M, N}$ be Magari algebras. Let $E\subseteq N$ be such that if $a\in E$, then  $a=\bigwedge\{\neg a_i\lor a_j: (i, j)\in I_a\}$ for some finite $I_a$ with $a_i, a_j\in O^+(\alg M)$ for every $(i, j)\in I_a$. Let 
    \[D:=\bigcup_{a\in E}\{\neg a_i\lor a_j: (i, j)\in I_a\}.\]
    If a pre-stable embedding $h:\alg N\to \alg M$ satisfies the BDC$^\square$ for $D$, then it satisfies the BDC$^\square$ for $E$ as well. \label{bdcsplit}
\end{lemma}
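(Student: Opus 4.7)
The claim is essentially that BDC$^{\square}$ propagates through finite conjunctions, so the proof should be a direct algebraic calculation using two normalizations of meets: on the one hand, $\square$ distributes over finite meets in any normal modal algebra (hence in any Magari algebra); on the other, any pre-stable embedding is a Boolean embedding and therefore preserves finite meets.

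Concretely, fix $a \in E$ and write $a = \bigwedge \{\neg a_i \lor a_j : (i,j) \in I_a\}$, and for brevity set $d_{ij} := \neg a_i \lor a_j$, so each $d_{ij} \in D$. The plan is then to chain the following equalities:
\begin{align*}
    h(\square a)
      &= h\Bigl(\square \bigwedge_{(i,j) \in I_a} d_{ij}\Bigr)
       = h\Bigl(\bigwedge_{(i,j) \in I_a} \square d_{ij}\Bigr)
       = \bigwedge_{(i,j) \in I_a} h(\square d_{ij}) \\
      &= \bigwedge_{(i,j) \in I_a} \square h(d_{ij})
       = \square \bigwedge_{(i,j) \in I_a} h(d_{ij})
       = \square\, h\Bigl(\bigwedge_{(i,j) \in I_a} d_{ij}\Bigr)
       = \square h(a).
\end{align*}
The justifications are, in order: the definition of $a$; normality of $\square$ in the $\logic{K4}$-algebra $\alg N$; that $h$ is a Boolean embedding (preserves finite meets); the assumption BDC$^\square$ for $D$ applied to each $d_{ij}$; normality of $\square$ in $\alg M$; once more that $h$ preserves finite meets; and finally the definition of $a$ again.

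\textbf{Main obstacle.} There isn't a substantive one: the computation is almost mechanical once one recognises that the hypotheses about $O^+(\alg M)$ and the Magari structure play no role in this particular argument (they will matter for the downstream application of the lemma, where $D$ arises from classicized domains and the quasi-openness controls the interaction with the translation $T$). The only point worth double-checking is that $I_a$ is genuinely finite (given), so that the normality identity $\square \bigwedge X = \bigwedge \square X$ and the Boolean-embedding fact $h(\bigwedge X) = \bigwedge h(X)$ both apply without any continuity or completeness concerns.
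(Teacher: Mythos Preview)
Your proof is correct and matches the paper's own argument essentially line for line: both reduce the claim to the chain $h(\square a)=h(\square\bigwedge d_{ij})=h(\bigwedge\square d_{ij})=\bigwedge h(\square d_{ij})=\bigwedge\square h(d_{ij})=\square\bigwedge h(d_{ij})=\square h(a)$, using normality of $\square$, that $h$ is a Boolean embedding, and the BDC$^\square$ for $D$. Your observation that the Magari and $O^+(\alg M)$ hypotheses are inert here is also accurate.
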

\begin{proof}
        Let $a\in E$ and reason as follows:
        \begin{align*}
            h(\square a)&=h(\square \bigwedge \{\neg a_i\lor a_j: (i, j)\in I_a\}\\
            &=h(\bigwedge \{\square(\neg a_i\lor a_j): (i, j)\in I_a\}\\
            &=\bigwedge \{h(\square(\neg a_i\lor a_j)): (i, j)\in I_a\}\\
            &=\bigwedge \{\square h(\neg a_i\lor a_j): (i, j)\in I_a\}\\
            &=\square \bigwedge \{h(\neg a_i\lor a_j): (i, j)\in I_a\}\\
            &=\square h(a).
        \end{align*}
\end{proof}

\begin{theorem}
    Let $\alg{M}$ be a Magari algebra. If $\alg{M}\not\models \Gamma/\Delta$, then there is a model $(\alg{M}, V)$ such that refutes $\Gamma/\Delta$ and has a pre-filtration $(\alg{N}, V')$ through $\mathit{Sfor}(\Gamma/\Delta)$ based on a Magari algebra $\alg{N}$. \label{admitsfiltrationmagari}
\end{theorem}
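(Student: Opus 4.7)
The plan is a detour through frontons: I construct $\alg{N}$ as $\greatest\alg{K}$ for a suitable finite $\logic{KM}$-algebra $\alg{K}$ sitting inside $\fragment\alg{M}$. Because $\alg{M}$ is Magari, $\fragment\alg{M}$ is a fronton and the construction from the proof of \Cref{admitsprefiltrationsim} is available there; meanwhile \Cref{sigmarhosub} provides a modal embedding $\iota:\greatest\fragment\alg{M}\hookrightarrow\alg{M}$, along which the resulting finite Magari algebra $\greatest\alg{K}$ will be transported back into $\alg{M}$.

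The first step is to use the Main Lemma (\Cref{mainlemma}) to arrange that the refuting valuation takes values in $\greatest\fragment\alg{M}$: since $\alg{M}$ is $\logic{K4.Grz}$ and refutes $\Gamma/\Delta$, so does $\greatest\fragment\alg{M}$. Fix a refuting valuation $\bar V$ on $\greatest\fragment\alg{M}$ and define $V:=\iota\circ\bar V$ on $\alg{M}$. Then $V$ refutes $\Gamma/\Delta$ on $\alg{M}$ and $V[\mathit{Sfor}(\Gamma/\Delta)]\subseteq\greatest\fragment\alg{M}=B(\fragment\alg{M})$. For each $\varphi\in\mathit{Sfor}(\Gamma/\Delta)$, pick a conjunctive normal form decomposition $V(\varphi)=\bigwedge_{j\in I_\varphi}(\neg c_{j,\varphi}\lor d_{j,\varphi})$ with all $c_{j,\varphi},d_{j,\varphi}\in\fragment\alg{M}$.

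Next, apply the construction from the proof of \Cref{admitsprefiltrationsim} inside the fronton $\fragment\alg{M}$, seeded with the quasi-open elements above, to produce a finite fronton $\alg{K}\subseteq\fragment\alg{M}$ whose lattice reduct contains all the $c_{j,\varphi}$ and $d_{j,\varphi}$ together with all $V(\square\psi)$ such that $\square\psi\in\mathit{Sfor}(\Gamma/\Delta)$, and such that the inclusion $\alg{K}\hookrightarrow\fragment\alg{M}$ is a pre-stable embedding satisfying $\mathrm{BDC}^{\to}$ at every pair $(c_{j,\varphi},d_{j,\varphi})$ and $\mathrm{BDC}^{\boxtimes}$ at every $c_{j,\varphi}\to_{\alg{K}}d_{j,\varphi}$ and every $V(\square\psi)$. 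Set $\alg{N}:=\greatest\alg{K}$, a finite Magari algebra, and let $V'$ agree with $V$ on the propositional variables in $\mathit{Sfor}(\Gamma/\Delta)$. The Boolean reduct of $\alg{N}$ equals $B(\alg{K})$, contains $V[\mathit{Sfor}(\Gamma/\Delta)]$, and embeds as a Boolean subalgebra of $\alg{M}$ via the composition of $\greatest\alg{K}\hookrightarrow\greatest\fragment\alg{M}$ with $\iota$.

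It remains to verify the $\mathrm{BDC}^{\square}$ and the pre-stability of $\alg{N}\hookrightarrow\alg{M}$. For the former on $E:=\{V(\varphi):\square\varphi\in\mathit{Sfor}(\Gamma/\Delta)\}$, \Cref{bdcsplit} reduces the task to checking $\mathrm{BDC}^{\square}$ at each conjunct $\neg c\lor d$ appearing in the decompositions. A short calculation using $I_{\alg{K}}(\neg c\lor d)=c\to_{\alg{K}} d$ gives $\square_{\alg{N}}(\neg c\lor d)=\boxtimes_{\alg{K}}(c\to_{\alg{K}}d)$, while the mix identity together with the definition of $\to$ in $\fragment\alg{M}$ yield $\square_{\alg{M}}(\neg c\lor d)=\boxtimes_{\fragment\alg{M}}(c\to_{\fragment\alg{M}}d)$; both sides coincide once the $\mathrm{BDC}^{\to}$ at $(c,d)$ and the $\mathrm{BDC}^{\boxtimes}$ at $c\to_{\alg{K}}d$ arranged in the previous step are in place. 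The main obstacle I anticipate is the pre-stability condition $a\land\square_{\alg{N}} a\leq\square_{\alg{M}} a$ for \emph{every} $a\in B(\alg{K})$, not just for $a\in E$: expanding such an $a$ in conjunctive normal form reduces the problem clause-by-clause to the same identities, but now at pairs of elements of $\alg{K}$ that need not originate from the refuting decomposition. I expect this to be resolvable by enlarging the initial BDC domains fed into the \Cref{admitsprefiltrationsim} construction to cover the additional pairs $(C,D)\in\alg{K}\times\alg{K}$ and elements $C\to_{\alg{K}} D$ required by the clause-by-clause analysis, while checking that the construction still terminates with $\alg{K}$ finite.
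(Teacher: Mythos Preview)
Your overall architecture matches the paper's: pass to $\greatest\fragment\alg{M}$ via \Cref{mainlemma}, decompose the valuation into quasi-opens, build a finite fronton $\alg{K}$ inside $\fragment\alg{M}$ using the construction from \Cref{admitsprefiltrationsim}, and take $\alg{N}=\greatest\alg{K}$. Your verification of the $\mathrm{BDC}^{\square}$ on $E$ via \Cref{bdcsplit} is also essentially what the paper does.

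The gap you yourself flag is real, and your proposed fix does not close it. You want pre-stability of the Boolean extension of the inclusion $\alg{K}\hookrightarrow\fragment\alg{M}$, i.e.\ $\square^+_{\greatest\alg{K}}a\leq\square^+_{\alg{M}}a$ for \emph{every} $a\in B(\alg{K})$. Your suggestion is to enlarge the $\mathrm{BDC}$ domains to cover all pairs $(C,D)\in\alg{K}\times\alg{K}$ and then rerun the \Cref{admitsprefiltrationsim} construction. But that construction adds new elements to $\alg{K}$, which produce new pairs needing $\mathrm{BDC}$, and there is no reason this iteration terminates; you would need a separate fixed-point argument that is nowhere in sight.

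The paper avoids this direct verification entirely. Having arranged the domains so that $\scrsi{K}{D}$ is \emph{classicizable} (this is why $D^{\boxtimes}$ is chosen to contain $\square^+(\neg a_i\lor a_j)$ exactly when $(a_i,a_j)\in D^{\to}$), the inclusion $\alg{K}\hookrightarrow\fragment\alg{M}$ witnesses $\fragment\alg{M}\not\models\scrsi{K}{D}$. The Rule Translation Lemma (\Cref{ruletrans}) then gives $\greatest\fragment\alg{M}\not\models\ruletrans{K}{D}$, and the refutation characterization (\Cref{refutalg}) hands you a pre-stable embedding $h:\greatest\alg{K}\to\greatest\fragment\alg{M}$ satisfying the $\mathrm{BDC}$ for $D_\circ$ \emph{by definition}. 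Composing with $\iota$ gives the required pre-stable embedding into $\alg{M}$, and pre-stability comes for free rather than having to be checked elementwise. So the missing idea is to route the passage from $\alg{K}$ to $\greatest\alg{K}$ through the syntactic machinery of classicizable rules and \Cref{ruletrans}, not through a direct algebraic computation.
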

\begin{proof}
Assume $\alg M\not\models \Gamma/\Delta$. Then, by \Cref{mainlemma}, also $\greatest\fragment \alg M\not\models \Gamma/\Delta$. Let $W$ be a valuation on $\greatest\fragment \alg M$ witnessing this fact. Define a valuation $V$ on $\alg M$ by setting $V(p):=\varrho^{-1}(W(p))$ for all $p\in \mathit{Prop}$ (recall that, by duality,  $\varrho^{-1}$ coincides with the canonical embedding of $\greatest \fragment \alg M$ into $\alg M$.)

We claim that every $a\in V[\mathit{Sfor}(\Gamma/\Delta)]$ is of the form 
\[a=\bigwedge_{(i, j)\in I_a} \neg a_i\lor a_j, \]
with each $I_a$ finite and $a_i, a_j\in O^+(\alg M)$ for each $(i, j)\in I_a$. This follows from the fact that $\greatest\fragment \alg M$ is skeletal and every element of $V[\mathit{Sfor}(\Gamma/\Delta)]$ is of the form $\varrho^{-1}(b)$ for some $b\in \greatest\fragment \alg M$. We fix one such decomposition for every $a\in V[\mathit{Sfor}(\Gamma/\Delta)]$.

Now, define:
\begin{align*}
    B_a&:=\{a_i, a_j :(i, j)\in I_a\},& C_a&:=\{\neg a_i\lor a_j :(i, j)\in I_a\}, \\
    B&:=\{B_a:a=V(\varphi), \varphi\in\mathit{Sfor}(\Gamma/\Delta) \},& C&:=\{C_a:a=V(\varphi), \varphi\in \mathit{Sfor}(\Gamma/\Delta) \},  \\
    S_a^\odot&:=\bigcup \{\odot (\neg a_i\lor a_j): (i, j)\in I_a\}&{} &\text{for $\odot\in \{\square^+, \square\}$},\\
    S^\odot&:=\bigcup \{S_a^\odot: a=V(\varphi), \text{ and } {\odot \varphi}, \varphi\in \mathit{Sfor}(\Gamma/\Delta)\}&{} &\text{for $\odot\in \{\square^+, \square\}$}.
\end{align*}
Think of each $a$ as constructed from quasi-opens in three steps. We start from the quasi-open ``building blocks'' of $a$. These are the elements of $B_a$. Then, the index set $I_a$ gives instructions on how to combine the quasi-open building blocks into into \emph{cells}, where the \emph{cells} are elements of the form $\neg a_i\lor a_j$ with $(i, j)\in I_a$. These are the elements of $C_a$. Finally, the cells are combined via conjunction to obtain $a$. With this picture in mind, each set $S^\odot_a$ contains all the $\odot$-necessitations of the cells of $a$. We put in $S^\odot$ precisely the $\odot$-necessitated cells of those elements $a$ with $\odot a\in V[\mathit{Sfor}(\Gamma/\Delta)]$. 

Let $A:=B\cup {S^{\square^+}}\cup S^{\square}$ and for $a\in V[\mathit{Sfor}(\Gamma/\Delta)]$ define
\begin{align*}
    D_a^\to&:= \{(a_i, a_j):(i, j) \in I_a\},\\
    D_a^\boxtimes&:=\{\square^+ (\neg a_i\lor a_j) : (i, j)\in I_a\},\\
    D^\to&:=\bigcup \{D^\to_a : a=V(\varphi)\text{ and }\square \varphi, \varphi\in \mathit{Sfor}(\Gamma/\Delta)\},\\
    D^\boxtimes&:=\bigcup \{D^\boxtimes_a : a=V(\varphi)\text{ and } \square \varphi, \varphi\in \mathit{Sfor}(\Gamma/\Delta)\}.
\end{align*}
Requiring $\square\varphi, \varphi\in \mathit{Sfor}(\Gamma/\Delta)$ in the definition of $D^\to$, instead of $\square^+\varphi, \varphi$, ensures that $\square^+(\neg a_i\lor a_j)\in D^\boxtimes$ iff $(a_i, a_j)\in D^\to$; this will be important later. 

We use the construction from the proof of \Cref{admitsprefiltrationsim} to construct a finite fronton $\alg K$ with the following properties:
\begin{enumerate}
    \item $\alg K$ is a bounded sublattice of $\fragment \alg M$ generated by a finite superset of $A$;
    \item The inclusion embedding $\subseteq : \alg K\to \fragment \alg M$ satisfies the BDC for $D:=(D^\to, D^\boxtimes)$.
\end{enumerate}
To do this, we enumerate $D^\boxtimes:=b_1, \ldots, b_n$ and construct a finite sequence $(\alg K_0, \ldots, $ $\alg K_n)$ of bounded sublattices of $\fragment \alg M$. Here $\alg K_0$ is the bounded sublattice of $\fragment \alg M$ generated by $A$ and $\alg K_{i+1}$ is obtained from $\alg K_i$ by adding complements in the Boolean sublattice $[b_{i+1}, \boxtimes b_{i+1}]$ to all elements of $\alg K_i$, then generating a bounded sublattice of $\fragment \alg M$ from the result. 

The inclusion embedding witnesses $\fragment \alg M\not\models \scrsi{K}{D}$. By choice of $D$, moreover, $ \scrsi{K}{D}$ is classicizable. So, by \Cref{ruletrans}, $\greatest\fragment \alg M\not\models \ruletrans{K}{D}$. Let $h:\greatest\alg K\to \greatest\fragment M$ be a pre-stable embedding witnessing this fact.  Since $\greatest\fragment \alg M$ is a subalgebra of $\alg M$, this yields a pre-stable embedding $k: \greatest\alg K\to \alg  M$ that satisfies the BDC for $D_\circ$. 


Define a valuation $V'$  on $\greatest\alg K$ by putting $V'(p)=k^{-1}(V(p))$ when $p\in \mathit{Sfor}(\Gamma/\Delta)$, and arbitrary otherwise.  Note that $\greatest\alg K$ is generated by a finite superset of  the set $V'[\mathit{Sfor}(\Gamma/\Delta)]$ as a Boolean algebra. Consider  the  domain $E:=\{V(\varphi):=\square\varphi\in \mathit{Sfor}(\Gamma/\Delta)\}.$ 
Note $a\in E$ implies $a=\bigwedge U$ for some $U\subseteq D$. So, by \Cref{bfcdownset}, $k$ satisfies the BDC for $E$. We have thus established that the model $(\alg K, V')$ is a pre-filtration of $(\alg M, V)$ through $\mathit{Sfor}(\Gamma/\Delta)$.
\end{proof}

The result just established allows us to axiomatize every $\clm$-rule system above $\logic{GL}$ entirely in terms of \emph{classicized} pre-stable canonical rules. 
\begin{theorem}
    Every $\clm$-rule $\Gamma/\Delta$ is equivalent, over $\logic{GL}$, to finitely many classicized $\clm$ pre-stable canonical rules.\label{rewriteclassicized}
\end{theorem}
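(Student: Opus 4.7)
The plan mimics the proof of \Cref{rewrite}, replacing the use of \Cref{admitsprefiltrationsim} with \Cref{admitsfiltrationmagari} so as to stay within the class of Magari algebras, and taking care that the resulting rules are \emph{classicized}, not merely pre-stable. I will let $\Phi$ consist of all classicized $\clm$ pre-stable canonical rules $\ruletrans{K}{D}$ such that (i) $\alg K$ is a finite fronton whose bounded distributive lattice reduct is generated by at most $M_n(k)$ elements (the iterated bound from \Cref{rewrite} with $k=|\mathit{Sfor}(\Gamma/\Delta)|$), and (ii) $D=(D^\to,D^\boxtimes)$ arises from some valuation $V'$ on $\greatest\alg K$ satisfying $\greatest\alg K, V'\not\models\Gamma/\Delta$ via the recipe in the proof of \Cref{admitsfiltrationmagari}. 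That recipe places $\square^+(\neg a_i\lor a_j)\in D^\boxtimes$ precisely when $(a_i,a_j)\in D^\to$, so the $\sim$-rule $\scrsi{K}{D}$ is classicizable and $\ruletrans{K}{D}$ is indeed classicized. Since the cardinality of $\alg K$ is bounded, $\Phi$ is finite up to isomorphism.

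For the nontrivial half, I would take a Magari algebra $\alg M$ refuting $\Gamma/\Delta$ and invoke \Cref{admitsfiltrationmagari} to produce a witnessing valuation $V$ on $\alg M$, a finite fronton $\alg K$ of the prescribed size, and (reading off its proof) a pre-stable embedding $k:\greatest\alg K\to\alg M$ satisfying the BDC$^\square$ for the $\clm$-domain $D_\circ$ underlying $\ruletrans{K}{D}$. The pre-filtration theorem for $\logic{K4}$-algebras guarantees $\greatest\alg K, V'\not\models\Gamma/\Delta$, so $\ruletrans{K}{D}\in\Phi$; and \Cref{refutalg} yields $\alg M\not\models\ruletrans{K}{D}$.

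Conversely, assume $\alg M\not\models\ruletrans{K}{D}$ for some $\ruletrans{K}{D}\in\Phi$. By \Cref{refutalg} there is a pre-stable embedding $k:\greatest\alg K\to\alg M$ satisfying the BDC$^\square$ for $D_\circ$. A short computation using the $\logic{K4}$-identity $\square\square^+ b=\square b$ together with the automatic BDC$^{\square^+}$ for $D_\circ$ (by \Cref{simplefact}) upgrades this to the BDC$^\square$ for the plain cells $\{\neg a_i\lor a_j\}$, and \Cref{bdcsplit} then delivers the BDC$^\square$ for $\{V'(\varphi):\square\varphi\in\mathit{Sfor}(\Gamma/\Delta)\}$. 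Defining $V$ on $\alg M$ by $V(p):=k(V'(p))$ for $p\in\mathit{Sfor}(\Gamma/\Delta)$ accordingly makes $(\greatest\alg K, V')$ a pre-filtration of $(\alg M, V)$ through $\mathit{Sfor}(\Gamma/\Delta)$, and the pre-filtration theorem for $\logic{K4}$-algebras transfers $\greatest\alg K, V'\not\models\Gamma/\Delta$ to $\alg M, V\not\models\Gamma/\Delta$.

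The hard part will be the forward direction, where one must verify that the embedding $k$ constructed inside the proof of \Cref{admitsfiltrationmagari} matches the classicized rule $\ruletrans{K}{D}$ placed in $\Phi$. Concretely, the $\sim$-rule $\scrsi{K}{D}$ implicit in that construction must be shown classicizable and its $\square^+$-split domain identified with $D_\circ$. Once this bookkeeping is dispatched, the rest of the argument is a direct transcription of \Cref{rewrite}.
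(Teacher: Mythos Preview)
There is a genuine gap in your finiteness argument. You claim that the frontons $\alg K$ arising from the construction in \Cref{admitsfiltrationmagari} are generated, as bounded distributive lattices, by at most $M_n(k)$ elements with $k=|\mathit{Sfor}(\Gamma/\Delta)|$, and hence that $\Phi$ is finite up to isomorphism. But this is not so. Look again at the proof of \Cref{admitsfiltrationmagari}: the generating set for $\alg K$ is $A=B\cup S^{\square^+}\cup S^\square$, where $B$ collects the quasi-open ``building blocks'' $a_i,a_j$ occurring in the decompositions $a=\bigwedge_{(i,j)\in I_a}\neg a_i\lor a_j$ of the elements $a\in V[\mathit{Sfor}(\Gamma/\Delta)]$. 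The cardinalities of the index sets $I_a$---and hence of $B$ and of $A$---depend on the particular Magari algebra $\alg M$ and valuation being pre-filtrated, not just on $\mathit{Sfor}(\Gamma/\Delta)$. There is no uniform bound of the form $M_n(k)$ available. The paper makes exactly this point in the remark following its proof of the theorem.

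The paper's fix is to let $\Phi$ be the (possibly infinite) collection of all such rules $\ruletrans{K}{D}$, observe that a Magari algebra refutes $\Gamma/\Delta$ iff it refutes some member of $\Phi$, and then invoke \emph{compactness} to cut $\Phi$ down to a finite equivalent subset $\Psi$. Your forward and backward directions are essentially right in spirit (modulo some bookkeeping about where $D$ comes from: it is extracted from the valuation on $\alg M$, not on $\greatest\alg K$), but you need the compactness step in place of the failed cardinality bound.
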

\begin{proof}
   Consider all the pre-filtrations $(\alg K, V')$ of countermodels $(\alg M, V)$ of $\Gamma/\Delta$ that can be obtained via the construction used in the previous theorem, identified up to isomorphism.  Let $\Phi$ be the corresponding set of $\clm$ pre-stable canonical rules $\ruletrans{\alg K}{D}$, with $D:=(D^\boxtimes, D^\to)$. By choice of $D^\boxtimes, D^\to$, the rule $\scrsi{\alg K}{D}$ is classicizable, hence $\ruletrans{\alg K}{D}$ is well defined. 

   By the properties of pre-filtration, a Magari algebra $\alg M$ refutes $\Gamma/\Delta$ iff it refutes some $\ruletrans{\alg K}{D}\in \Phi$. By compactness, then, there must be a finite subset $\Psi\subseteq \Phi$ such that a Magari algebra $\alg M$ refutes $\Gamma/\Delta$ iff it refutes some $\ruletrans{\alg K}{D}\in \Psi$.
\end{proof}

Note that the proof just given is less constructive than that of \Cref{rewrite}, in that we are not able to explicitly give an upper bound to the size of $\Psi$. This is because, in the proof of \Cref{admitsfiltrationmagari}, the size of the set generating $\alg K$ depends not only on $\mathit{Sfor}(\Gamma/\Delta)$, but also on properties of the specific countermodel we are pre-filtrating.

\subsection{Application: Preservation Results}

In this last section, we prove that the mapping $\sigma$, above $\logic{KM}$, preserves Kripke completeness and the finite model property. 
\begin{lemma}
    Let $\spa X$ be a Kripke frame for a $\sim$-logic $\logic L\in \op{NExt}(\logic{KM})$. Then $\greatest\spa X$ is a Kripke frame for $\greatest \logic L$.\label{sigmakripke}
\end{lemma}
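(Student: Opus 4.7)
The plan is to verify directly that $\greatest\spa X = (X, R)$ with $R = \sqsubset$ is a Kripke frame for $\greatest \logic L$. By Proposition~\ref{sigmataucoincide}, $\greatest \logic L$ is axiomatised over $\logic{GL}$ by $T[\logic L]$, so it suffices to show (i) $\greatest \spa X \models \logic{GL}$ and (ii) $\greatest \spa X \models T(\Gamma/\Delta)$ for every $\Gamma/\Delta \in \logic L$.

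For (i), I will show that $\sqsubset$ is transitive and conversely well-founded. Converse well-foundedness is immediate from Proposition~\ref{maxirrkripke}, since $\spa X$ validates $\logic{KM}$. Transitivity is the main subtlety: it is not explicitly listed among the non-topological conditions of a $\sim$-Kripke frame, but it follows from their interaction with $\logic{KM}$. Indeed, if $x \sqsubset y \sqsubset z$, then $x \leq y \leq z$ (because $\sqsubset$ is contained in $\leq$), hence $x \leq z$ by transitivity of $\leq$. Since $\leq$ is the reflexive closure of $\sqsubset$, either $x = z$ or $x \sqsubset z$; but $x = z$ would produce the infinite ascending $\sqsubset$-chain $x \sqsubset y \sqsubset x \sqsubset y \sqsubset \dots$, contradicting converse well-foundedness. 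Hence $\sqsubset$ is transitive, and so $\greatest \spa X$ is a transitive and conversely well-founded Kripke frame, which is the standard characterisation of $\logic{GL}$-frames.

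For (ii), given any $\clm$-valuation $V$ on $\greatest \spa X$, I will define an auxiliary $\sim$-valuation $V'$ on $\spa X$ by setting $V'(p) := V(\square p)$ for each propositional variable $p$. Transitivity of $R$ ensures that $V(\square p)$ is a $\leq$-upset (since $\leq$ is the reflexive closure of $\sqsubset = R$), so $V'$ is a bona fide $\sim$-valuation. A routine induction on the structure of $\sim$-formulae $\varphi$, using the clauses $T(p) = \square p$, $T(\boxtimes \varphi) = \square T(\varphi)$, and $T(\varphi \to \psi) = \square^+(\neg T(\varphi) \lor T(\psi))$, together with the observation that $\upset[\leq]{x} = \{x\} \cup \upset[\sqsubset]{x}$, then shows $V'(\varphi) = V(T(\varphi))$. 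Since $\spa X \models \Gamma/\Delta$ by assumption, $V'$ in particular satisfies $\Gamma/\Delta$, whence $V$ satisfies $T(\Gamma/\Delta)$; as $V$ was arbitrary, $\greatest \spa X \models T(\Gamma/\Delta)$.

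The main obstacle I anticipate is the transitivity argument for $\sqsubset$, which requires exploiting both the $\sim$-Kripke frame conditions and the $\logic{KM}$ axioms simultaneously. The rest is essentially bookkeeping between $\clm$- and $\sim$-valuations on the common carrier $X$, relying on the fact that the translation clauses of $T$ mirror the semantic clauses for $\sim$-formulae once $\square p$ is chosen to interpret each atom.
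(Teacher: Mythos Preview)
Your proof is correct and follows the same overall decomposition as the paper: use \Cref{sigmataucoincide} to reduce to showing that $\greatest\spa X$ is a $\logic{GL}$-frame and that it validates all translated rules $T(\Gamma/\Delta)$. The paper's own proof is a two-line appeal to \Cref{translationrho} and \Cref{maxirrkripke}; you instead unpack both citations. In particular, you make explicit the transitivity of $\sqsubset$ (which the paper leaves implicit in its appeal to \Cref{maxirrkripke}), and rather than invoking the algebraic \Cref{translationrho} you re-derive the key identity $V'(\varphi)=V(T(\varphi))$ by a direct induction on formulae using the matched valuation $V'(p):=V(\square p)$. This is a legitimate, more elementary route that avoids the detour through complex algebras and the identity $\fragment(\greatest\spa X)^+\cong\spa X^+$ that the paper's citation of \Cref{translationrho} tacitly relies on; the cost is only that you reprove a special case of that lemma by hand.
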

\begin{proof}
    By \Cref{translationrho}, $\greatest\spa X$ validates every rule $T(\Gamma/\Delta)$ for $\Gamma/\Delta\in \logic L$, and by \Cref{maxirrkripke} it is also a $\logic{GL}$-frame. Given \Cref{sigmataucoincide}, this suffices to conclude that $\greatest\spa X$ is a Kripke frame for $\greatest \logic L$.
\end{proof}
\begin{theorem}[Cf. {\cite[Prop. 23]{Muravitsky2014LKaB}}]
    Let $\logic{L}\in \op{NExt}(\logic{KM})$. 
    \begin{enumerate}
        \item $\logic{L}$ is Kripke complete iff  $\tau \logic L$ is Kripke complete.
        \item $\logic{L}$ has the finite model property iff   $\tau \logic L$ has the finite model property.
    \end{enumerate}
\end{theorem}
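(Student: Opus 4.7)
Since $\logic L \in \op{NExt}(\logic{KM})$, \Cref{sigmataucoincide} collapses $\tau \logic L = \greatest \logic L$, so it suffices to prove both equivalences with $\greatest$ in place of $\tau$. The plan is to transfer refutations between the $\sim$ and $\clm$ sides using $\greatest$ and $\fragment$ together with \Cref{sigmakripke}, \Cref{translationrho}, and the Rule Translation Lemma (\Cref{ruletrans}), reducing arbitrary $\clm$-rules to classicized pre-stable canonical rules via \Cref{rewriteclassicized}. Kripke completeness and FMP proceed identically in structure, so I will describe only the Kripke completeness case; for FMP one replaces Kripke frames by finite algebras throughout, noting that $\fragment$ preserves finiteness trivially (it selects a subalgebra) and $\greatest$ does so because the free Boolean extension of a finite distributive lattice is finite.

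For $(\Leftarrow)$, suppose $\greatest \logic L$ is Kripke complete and take $\Gamma/\Delta \notin \logic L$. By the Esakia theorem (\Cref{blokesakia}), $T(\Gamma/\Delta) \notin \greatest \logic L$, so there is a $\clm$-Kripke frame $\spa X \models \greatest \logic L$ with $\spa X \not\models T(\Gamma/\Delta)$. I will verify that $\fragment \spa X$ is a $\sim$-Kripke frame that validates $\logic L$ and refutes $\Gamma/\Delta$: the refutation is immediate from \Cref{translationrho} (transferred to Kripke frames via \Cref{kripkedual}), and for any $\Sigma/\Xi \in \logic L$ we have $T(\Sigma/\Xi) \in \greatest \logic L$, so $\spa X \models T(\Sigma/\Xi)$ and hence $\fragment \spa X \models \Sigma/\Xi$ by the same lemma.

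For $(\Rightarrow)$, suppose $\logic L$ is Kripke complete and take $\Gamma/\Delta \notin \greatest \logic L$. By \Cref{rewriteclassicized}, it suffices to consider the case $\Gamma/\Delta = \ruletrans{F}{\spa D}$ for some classicized pre-stable canonical rule. The associated classicizable $\sim$-rule $\scrsi{F}{\spa D}$ cannot belong to $\logic L$: otherwise $T(\scrsi{F}{\spa D}) \in \greatest \logic L$, and \Cref{ruletrans} would force $\ruletrans{F}{\spa D} \in \greatest \logic L$, contradicting the hypothesis. Kripke completeness of $\logic L$ then yields a $\sim$-Kripke frame $\spa Y \models \logic L$ with $\spa Y \not\models \scrsi{F}{\spa D}$; by \Cref{sigmakripke}, $\greatest \spa Y$ is a Kripke frame for $\greatest \logic L$. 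Since $\spa Y$ is $\sqsubset$-irreflexive by \Cref{maxirrkripke}, we have $\fragment \greatest \spa Y = \spa Y$, so \Cref{translationrho} gives $\greatest \spa Y \not\models T(\scrsi{F}{\spa D})$, and \Cref{ruletrans} then yields the desired $\greatest \spa Y \not\models \ruletrans{F}{\spa D}$.

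The main technical obstacle is the $(\Rightarrow)$ direction, where an arbitrary $\clm$-rule out of $\greatest \logic L$ need not be in the image of $T$, so one cannot directly pull back to a $\sim$-rule outside $\logic L$. The remedy is the two-step reduction described above: first specialize, via \Cref{rewriteclassicized}, to classicized pre-stable canonical rules, and then use \Cref{ruletrans} to pass to a classicizable $\sim$ pre-stable canonical rule outside $\logic L$, on which the Kripke completeness or FMP of $\logic L$ can be directly invoked.
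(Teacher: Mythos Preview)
Your proof is correct and follows essentially the same approach as the paper, relying on the same key ingredients (\Cref{sigmataucoincide}, \Cref{translationrho}, \Cref{sigmakripke}, \Cref{ruletrans}, and \Cref{rewriteclassicized}). The only organizational difference is that in the $(\Rightarrow)$ direction you reduce directly to a single classicized rule $\ruletrans{F}{\spa D}\notin\greatest\logic L$ via \Cref{rewriteclassicized}, whereas the paper first passes through an arbitrary $\clm$ pre-stable canonical rule and a refuting space before extracting a classicized rule from the equivalent finite set; your route is slightly more streamlined but not substantively different.
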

\begin{proof}
    Recall that $\least$ and $\greatest$ coincide above $\logic{KM}$ and $\logic{GL}$ (\Cref{sigmataucoincide}). So, it suffices to prove the version of the theorem obtained by substituting $\greatest$ for $\least$. 
    
    \emph{Kripke completeness}. $(\Leftarrow)$ Assume $\greatest \logic M$ is Kripke complete. Suppose $\logic L\not\vdash \Gamma/\Delta$. Then $\greatest\logic L\not\vdash T(\Gamma/\Delta)$. So, there is a Kripke frame $\spa X$ for $\greatest\logic L$ such that $\spa X\not\models T(\Gamma/\Delta)$. By \Cref{translationrho}, we have $\fragment \spa X\not\models \Gamma/\Delta$. Since, by \Cref{translationrho} once more, $\fragment \spa X$ is a Kripke frame for $\logic L$, we are done. 
    
    $(\Rightarrow)$ It suffices to show that for every $\clm$ pre-stable canonical rule $\scrmod{F}{\spa D}$, if $\greatest \logic L\not\vdash \scrmod{F}{\spa D}$, then there is a Kripke frame for $\greatest \logic L$ that refutes $\scrmod{F}{\spa D}$. So suppose $\greatest \logic L\not\vdash \scrmod{F}{\spa D}$. Then there is a modal space $\spa X$ for $\greatest \logic L$ that refutes $\scrmod{F}{\spa D}$. By \Cref{rewriteclassicized}, there is a finite set $\Phi$ of classicized $\clm$ pre-stable canonical rules whose conjunction is equivalent to $\scrmod{F}{\spa D}$ over $\logic{GL}$. Since $ \spa X$ is a $\logic{GL}$-space, there must be some $\ruletrans{G}{\spa E}\in \Phi$ with $ \spa X\not\models \ruletrans{G}{\spa E}$. By  \Cref{ruletrans} and \Cref{translationrho}, it follows that $\fragment \spa X\in \op{Spa}(\logic L)$ refutes $\scrsi{G}{\spa E}$. Since $\logic L$ is Kripke complete, there is a Kripke frame $\spa Y$ for $\logic L$ that refutes $\scrsi{G}{\spa E}$. Then $\sigma\spa Y$ is a Kripke frame for $\sigma L$, by \Cref{sigmakripke}. By \Cref{ruletrans} and \Cref{translationrho} once more, we have $\greatest\spa Y\not\models\ruletrans{G}{\spa E}$, which in turn implies $\scrsi{F}{\spa D}$.

    For the finite model property, we reason exactly as we just did, noting that if a $\clm$ Kripke frame $\spa X$ is finite, then so is $\fragment \spa X$, and 
    a $\sim$ Kripke frame $\spa{Y}$ is finite only if $\greatest\spa Y$ is. 
\end{proof}

\section{Conclusion}
In this paper we developed the technique of pre-stable canonical rules for the Kuznetsov–Muravitsky system $\logic{KM}$ and classical modal logics above $\logic {K4}$. We applied these rules to obtain  alternative proofs of the Kuznetsov–Muravitsky isomorphism and the Esakia theorem,  as well as of some preservation results. 

There are several  possible directions for further research. One would be to investigate the impact that the technique of pre-stable canonical rules could have on the logic $\logic{KM}$. One could try to develop a theory of pre-stable logics for $\logic{KM}$, in analogy with the theory of stable logics from \cite{BezhanishviliEtAl2016SCR, BezhanishviliEtAl2016CSL}. Similarly, it would be interesting to obtain concrete axiomatizations of extensions of $\logic{KM}$ via pre-stable canonical rules.

It would also be natural to explore the potential impact of the method of pre-stable canonical rules on other intuitionistic modal logics. The technique of stable canonical rules for intuitionistic modal logics has already been applied by \cite{Liao2023SCRfIML} and \cite{melzer2020canonical}. However, pre-stable canonical rules might be applicable in cases where stable canonical rules are not—for example, in certain logics that resemble $\logic{KM}$. We leave this as an open-ended question for future work.

Finally, it be would interesting to explore whether algebra-based rules can be developed for all extensions of $\logic{mHC}$, as opposed to only extensions of $\logic{KM}$. This would allow us to extend the preservation results from \Cref{sec:trans} to extensions of $\logic{mHC}$.

\vspace{3mm}
\noindent
{\bf Acknowledgement}\  The authors are very grateful to the editors of this volume for their patience and for their valuable comments and pointers to the literature. They also wish to thank Rodrigo Almeida for many helpful discussions on topics related to this paper. The first-named author would like to acknowledge the support of the MSCA-RISE (Marie Skłodowska-Curie Research and Innovation Staff Exchange) project MOSAIC, grant agreement No.~101007627, funded by Horizon 2020 of the European Union.

\bibliographystyle{default}
\bibliography{database.bib}

\end{document}